\DeclarePairedDelimiter\floor{\lfloor}{\rfloor}
\numberwithin{equation}{section}
\newtheorem{defi}{Definition}[section]
\newtheorem{unTheorem}{Theorem}[section]
\newtheorem{propal}[defi]{Proposition}
\newtheorem{rem}[defi]{Remark}
\newtheorem{cor}[defi]{Corollary}
\newtheorem{nota}[defi]{Notation}
\title{Semi-classical limit of the Klein-Gordon equation to relativistic Euler equations via an adapted modulated energy method}
\author[]{Tony Salvi}
\date{}
\begin{document}

\maketitle

\begin{center}
    \textbf{Abstract}
    \hfill\begin{minipage}{\dimexpr\textwidth-1cm}
    
	We show the convergence of the solutions to the massive nonlinear Klein-Gordon equation toward solutions to a relativistic Euler with potential-type system in the semi-classical limit. In particular, the momentum and the density of Klein-Gordon converge to the momentum and the density of the relativistic Euler system in Lebesgue norms. The relativistic Euler with potential system is equivalent to the standard relativistic barotropic Euler system, differing only by a change in the unknowns. The proof relies on the modulated energy method adapted to the wave equation and the relativistic setting: a modulated stress-energy method. 
\end{minipage}
\end{center}
\tableofcontents
\section{Introduction} 
\label{section:Intro}
In this paper, we are interested in the semi-classical version of the massive nonlinear Klein-Gordon (KG) equation with a potential
\begin{equation}
\label{eq:KGPintro}	\varepsilon^2\Box\Phi^\varepsilon=\Phi^\varepsilon+2V'(|\Phi^\varepsilon|^2)\Phi^\varepsilon.
\end{equation}
We set ourselves in the Minkowski spacetime, in $\mathbb{R}^{1+3}$ endowed with the Minkowski metric $g=Diag(-1,1,1,1)$. The complex function $\Phi^\varepsilon$ is the wave function, $V'$ is a potential that we restrict to a defocusing power-law nonlinearity $V'(x)=x^{\gamma-1}$ for $\gamma\geq2$, and $\varepsilon$ is a small parameter. In particular, we look at the behaviour of the solutions when $\varepsilon$ goes to 0, in the semi-classical limit. We show that in this limit the dynamics of solutions to \eqref{eq:KGPintro} is given by a relativistic Euler with potential-type system (REP)
\begin{equation}
\label{eq:EulerREPintro}
\begin{cases}
 \textbf{U}^\alpha\textbf{D}_\alpha \textbf{U}_\beta+\textbf{D}_\beta V'(\rho)=0,\\
\textbf{U}^\alpha\textbf{D}_\alpha \rho+\textbf{D}_\alpha \textbf{U}^\alpha\rho=0,\\
\textbf{U}^\alpha \textbf{U}_\alpha+2V'(\rho)=-1,
\end{cases}
\end{equation}
where $\textbf{U}$ is a four-velocity vector field, $\rho$ a density, and $V'$ the same as in \eqref{eq:KGPintro}. In particular, the momentum $\textbf{J}^\varepsilon=-\Im(\Phi^\varepsilon\overline{\varepsilon \textbf{D}\Phi^\varepsilon})$ and the density $\rho^\varepsilon=|\Phi^\varepsilon|^2$ of \eqref{eq:KGPintro} converge to the momentum $\textbf{J}=\textbf{U}\rho$ and the density $\rho$ associated with \eqref{eq:EulerREPintro} as $\varepsilon\to0$. The REP system \eqref{eq:EulerREPintro} is an unusual model for relativistic fluids. We derive it heuristically from the WKB method applied to \eqref{eq:KGPintro} in section \ref{subsection:WKB} to make its emergence clearer. In fact, the REP system \eqref{eq:EulerREPintro} is of particular interest because it is equivalent to the more usual relativistic barotropic Euler 
\begin{equation}
    \begin{cases}
    \label{eq:relatEulintro}
        \textbf{u}^\alpha\textbf{D}_\alpha \textbf{u}^\beta+(g^{\alpha\beta}+\textbf{u}^\alpha \textbf{u}^\beta)\frac{\textbf{D}_\alpha p}{\mu+p}=0,\\
        \textbf{u}^\alpha\textbf{D}_\alpha\mu+\textbf{D}_\alpha \textbf{u}^\alpha(\mu+p)=0,\\
        p=p(\mu),\\
        \textbf{u}^\alpha \textbf{u}_\alpha=-1,
    \end{cases}
\end{equation}
where $\textbf{u}$ is also a four-velocity vector field, $\mu$ a density, and $p$ corresponds to a pressure. The equivalence is understood
in the sense that, with the right change of unknown, $(\textbf{u},\mu,p)=(\textbf{u}(\textbf{U},\rho,V'(\rho)),\mu(\textbf{U},\rho,V'(\rho)),p(\textbf{U},\rho,V'(\rho)))$ is a solution to \eqref{eq:relatEulintro}
if and only if $(\textbf{U},\rho)$  is a solution to \eqref{eq:EulerREPintro}. This implies that the semi-classic limit of KG is the usual relativistic barotropic Euler. We give a discussion on the relation between \eqref{eq:EulerREPintro} and \eqref{eq:relatEulintro} and their non-relativistic limits in sections \ref{section:relatEul}.\\\\
The first inspirations of this work are the book \cite{zbMATH05243173} (and notably the chapter on the modulated energy method that presents arguments from \cite{zbMATH05503664}, inspired by \cite{zbMATH05000052}) on the semi-classical limit of Schrödinger and the sequence of papers \cite{zbMATH05782734}, \cite{zbMATH06101438} and \cite{zbMATH06039446} on the hydrodynamic limits of Klein-Gordon. Indeed, these works are linked because the Klein-Gordon equation is a relativistic "equivalent" of the Schrödinger equation, see \cite{zbMATH01837395,zbMATH01747314,zbMATH03871951,NAJMAN1990217,zbMATH06303453,zbMATH01650142,pasquali2018dynamicsnonlinearkleingordonequation,lei2023nonrelativistic,bambusi2025nonrelativisticlimitnonlinear}
for rigorous proofs of the non-relativistic limit of Klein-Gordon to Schrödinger. Nonetheless, mathematically speaking, these two equations are quite different and the methods of \cite{zbMATH05243173} must be adapted to the wave equation and the relativistic setting. The results of \cite{zbMATH06101438} find themselves in the middle of the relativistic and the non-relativistic setting and do not exhibit the structure we show in this paper. Moreover, the proof of the semi-classical limit of Klein-Gordon in \cite{zbMATH06101438} drastically relies on a rescaling in time and strong uniform assumptions on the energy, this masks the true relativistic nature of the hydrodynamic limit system. More details on the consistency of our results and theirs are given in section \ref{subsection:comments}. 
\\\\
As done in the previously cited papers, we rely on the  modulated energy method. This method goes back to \cite{zbMATH01442880}, on the quasineutral limit for Vlasov-Poisson, and was quickly adapted to the semi-classical limit of the Schrödinger equation, for example in \cite{zbMATH01877186}, \cite{zbMATH01970480}, \cite{zbMATH01987564}, \cite{zbMATH05000052}, then \cite{zbMATH05243173} and more recently \cite{zbMATH07570763}. See \cite{zbMATH06101438} for an overview of the history of this method on Schrödinger-affiliated systems. The main idea of the modulated energy method is to construct a functional $H^\varepsilon$ (the modulated energy) that depends on the time $t$ and that controls the difference between quantities of interest (here the momentum and the density) of a starting equation (here KG \eqref{eq:KGPintro}) and the same quantities for an arrival system (here the REP system \eqref{eq:EulerREPintro}) in  $L^p$ norms. Moreover, under suitable conditions on the initial data (for \eqref{eq:KGPintro} and \eqref{eq:EulerREPintro}, given in definition \ref{defi:wellprepREP} and in the assumptions of Theorem \ref{unTheorem:TH1mainth}), the modulated energy must be such that if $H^\varepsilon(0)=O(\varepsilon^2)$ then $H^\varepsilon(t)=O(\varepsilon^2)$ for $t$ in some interval $[0,T]$. These properties imply the convergence of the quantities of interest at the limit $\varepsilon\to0$.\\ Considering the standard modulation of the energy for the nonlinear Schrödinger equation of  \cite{zbMATH05243173} (inspired by \cite{zbMATH05000052}), the expected modulated energy for the nonlinear Klein-Gordon equation must be 
\begin{equation}
\label{eq:moduH0}
     H^\varepsilon_0=\int{\frac{|(\varepsilon\textbf{D}-i\textbf{U})\Phi^\varepsilon|^2}{2} +\left(V(|\Phi^\varepsilon|^2)-V(\rho)-V^{\prime}(\rho)(|\Phi^\varepsilon|^2-\rho)\right)dx}.
\end{equation}
The actual modulated energy we build satisfies 
\begin{align}
    H^\varepsilon\sim H^\varepsilon_0
\end{align}
where the equivalence is understood in the sense of norms. 
The use of $H^\varepsilon$, instead of $H^\varepsilon_0$, is needed in our proof and the construction of the latter requires the modulation of the full stress-energy-tensor of KG
\begin{align}
\label{eq:stressenergintro}
    T^{KG}[\Phi^\varepsilon]_{\alpha\beta}=\frac{\varepsilon^2}{2}(\textbf{D}_\alpha\Phi^\varepsilon\textbf{D}_\beta\overline{\Phi^\varepsilon}+\textbf{D}_\alpha\overline{\Phi^\varepsilon}\textbf{D}_\beta\Phi^\varepsilon)-\frac{1}{2}g_{\alpha\beta}(\varepsilon^2\textbf{D}_\mu\overline{\Phi^\varepsilon}\textbf{D}^\mu\Phi^\varepsilon+|\Phi^\varepsilon|^2+2V(|\Phi^\varepsilon|^2)),
\end{align}
it is properly defined in section \ref{section:defmod}.
In particular, the modulated "stress-energy" $H^\varepsilon$ takes into consideration the full relativistic aspect of the problem and obeys better evolution equations than $H^\varepsilon_0$. We note that we adapt the modulated energy of \cite{zbMATH05243173} directly for wave equations and the relativistic setting, while in \cite{zbMATH06101438} the authors use a modulated energy on the "modulated" Klein-Gordon equation, i.e., a Schrödinger equation with a source term whose unknown is, up to a rescaling, the wave function solution to \eqref{eq:KGPintro}. As far as we know, our work is the first to derive relativistic fluid equations in a semi-classical limit. However, in the semi-relativistic setting, the semi-classical limit of Pauli-Poisswell is treated in \cite{yang2024semiclassicallimitpaulipoisswelleulerpoisswell} to obtain the Euler-Poisswell system using WKB techniques.  \\
The arguments given in our proof rely on the existence of sufficiently regular and global solutions to the massive nonlinear Klein-Gordon equation. Such solutions exist in the subcritical and critical cases. This is a standard result, see \cite{Brenner1979,Wahl1981,zbMATH03875887,zbMATH04202876,zbMATH00011219,zbMATH00168350,zbMATH00703984,zbMATH00817689,zbMATH05035890,luk2014introduction} for example. In dimension 3, the critical exponent corresponds to $\gamma=3$.
\subsection{Context on the Klein Gordon equation}
\label{subsection:contextKG}
We introduce the massive nonlinear Klein-Gordon equation in its physical scaling 
\begin{equation}
\label{eq:KGPphysicalcontext}	\hbar^2\Box\Phi=c^2m^2\Phi+2V'(|\Phi|^2)m\Phi,
\end{equation}
with $c$ the speed of light, $\hbar$ the Planck constant divided by $2\pi$, $m>0$ the mass (set to 1 in what follows) and $\Box=-\frac{1}{c^2}\partial_{tt}^2+\Delta$ the d'Alembertian. In the absence of a potential, the Klein-Gordon equation is directly derived via the relativistic dispersion relation (the energy-momentum relation $\textbf{p}^\alpha\textbf{p}_\alpha=-m^2c^4$) as the Schrödinger equation is derived via the non-relativistic dispersion relation for the mass, the energy and the momentum  ($\frac{p^2}{2m}=E$). The wave function solution to \eqref{eq:KGPphysicalcontext} with $V'=0$ describes the evolution of a relativistic massive spinless free particle. We add the potential to model more complex phenomena.
By setting $c=1$ and $\varepsilon=\hbar$ we recover \eqref{eq:KGPintro}. Indeed, in the relativistic context $c$ is normalized to 1 and in semi-classical physics $\hbar$ is considered a very small constant.\\
One can also recover \eqref{eq:KGPintro} using the natural units $c=\hbar=1$ and rescaling the solution $\Phi$ to \eqref{eq:KGPphysicalcontext} as $\Phi^\varepsilon(t,x)=\Phi(\frac{t}{\varepsilon},\frac{x}{\varepsilon})$. Thus, the solutions $\Phi^\varepsilon$  to \eqref{eq:KGPintro} are intrinsically high frequency, see section \ref{subsection:WKB} for more details.\\
With the previous remark, we see that the solution $\Phi$ is rescaled so that the time interval on which the $\Phi^\varepsilon$ are defined shrinks as $\varepsilon$ approaches 0. This is not a problem for global solutions, if $\Phi$ is defined on $\mathbb{R}_+\times\mathbb{R}^3$ then so is $\Phi^\varepsilon$ for any $\varepsilon>0$. It is important because we want the convergence of  $\textbf{J}^\varepsilon$ (resp. $\rho^\varepsilon$) to $\textbf{J}$ (resp. $\rho$) to make sense on a fixed time interval (uniform in $\varepsilon$). 
In other words, we want the family $(\Phi^\varepsilon)_{0<\varepsilon<1}$ of solutions to \eqref{eq:KGPintro} and the solution to the REP system to exist on a common time interval with enough regularity to manage the proof of convergence. \\
From a mathematical point of view, the Klein-Gordon equation is a nonlinear wave equation. Thus, a rich literature can be used to study it. See \cite{zbMATH05035890}, \cite{wang2015lectures} or \cite{luk2014introduction} for example.\\
\subsection{WKB approximation}
\label{subsection:WKB}
The solutions to \eqref{eq:KGPintro} are truly high frequency because of the scaling of the equation. It is natural to use the WKB expansion to get approximate solutions and to understand their behaviours. We refer to \cite{METIVIER2009169}, \cite{zbMATH06039454}, and the lecture notes \cite{Rauch1999LecturesOG} (with applications on Maxwell's equations) for a general presentation of the WKB method and to \cite{zbMATH01132286} for an application on the semi-classical Schrödinger equation. The WKB method does not close in our case\footnote{In particular, the techniques of \cite{zbMATH01132286}, which are build on specific structures of the nonlinear Schrödinger equation, do not apply to the nonlinear Klein-Gordon equation.}, and we use the modulated energy method to prove our result presented in the next section \ref{subsection:mainresults}. Nonetheless, the WKB method is easier to read, so we use it heuristically to motivate our result. We look for $(\Phi^\varepsilon_1)_{0<\varepsilon<1}$, a family of approximate solutions, under the form of a phase-amplitude ansatz
\begin{equation}
    \Phi_1^\varepsilon=e^{i\frac{\omega}{\varepsilon}}A
\end{equation}
with $\omega$ a smooth real phase and $A$ a smooth complex amplitude. In particular, the high-frequency character of $\Phi^\varepsilon_1$ is explicit and concentrated in the phase.  \\
\begin{propal}
\label{propal:approxWKB}
    Let $\omega$ and $A$ satisfy
    \begin{equation}
    \label{eq:approxsystWKB}
\begin{cases}
    \textbf{D}^\alpha \omega\textbf{D}_\alpha \omega=-1-2V'(|A|^2),\\
    2\textbf{D}^\alpha \omega\textbf{D}_\alpha A+A\Box \omega=0.
    \end{cases}
\end{equation}
Then, $\Phi_1^\varepsilon$ is an approximate solution of order $2$ to \eqref{eq:KGPintro}, that is
\begin{equation}   
\label{eq:approxsolutWKB}
\varepsilon^2\Box\Phi_1^\varepsilon-\Phi_1^\varepsilon-2V'(|\Phi_1^\varepsilon|^2)\Phi_1^\varepsilon=O(\varepsilon^2).
\end{equation}
\end{propal}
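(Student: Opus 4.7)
The plan is a direct WKB-style computation: substitute the ansatz $\Phi_1^\varepsilon = e^{i\omega/\varepsilon}A$ into the Klein-Gordon operator, expand by the product/chain rules, and collect terms according to the power of $\varepsilon$ they carry. First I would differentiate once to obtain $\partial_\alpha\Phi_1^\varepsilon = e^{i\omega/\varepsilon}\bigl(\tfrac{i}{\varepsilon}(\partial_\alpha\omega)A + \partial_\alpha A\bigr)$, then differentiate a second time and contract indices (raising with the Minkowski metric) to get, after multiplication by $\varepsilon^2$,
\begin{equation*}
\varepsilon^2\Box\Phi_1^\varepsilon = e^{i\omega/\varepsilon}\Bigl[-(\boldsymbol{\nabla}^\alpha\omega)(\boldsymbol{\nabla}_\alpha\omega)\,A + i\varepsilon\bigl(2(\boldsymbol{\nabla}^\alpha\omega)(\boldsymbol{\nabla}_\alpha A) + A\,\Box\omega\bigr) + \varepsilon^2\,\Box A\Bigr].
\end{equation*}

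Next I would observe that the potential part simplifies because $|\Phi_1^\varepsilon|^2 = |A|^2$ (the phase is of modulus one), so $\Phi_1^\varepsilon + 2V'(|\Phi_1^\varepsilon|^2)\Phi_1^\varepsilon = e^{i\omega/\varepsilon}A\bigl(1+2V'(|A|^2)\bigr)$. Subtracting, the $\varepsilon^0$ coefficient of the residual is precisely $-\bigl[(\boldsymbol{\nabla}^\alpha\omega)(\boldsymbol{\nabla}_\alpha\omega)+1+2V'(|A|^2)\bigr]A$, which vanishes by the eikonal equation (the first line of (5)), and the $\varepsilon^1$ coefficient is $i\bigl[2(\boldsymbol{\nabla}^\alpha\omega)(\boldsymbol{\nabla}_\alpha A)+A\,\Box\omega\bigr]$, which vanishes by the transport equation (the second line of (5)). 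Only the term $\varepsilon^2 e^{i\omega/\varepsilon}\Box A$ survives, and since $A$ is smooth by hypothesis, $\Box A$ is bounded on any compact set, so this residue is $O(\varepsilon^2)$, yielding the claim.

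There is no real obstacle here; the proposition is essentially a bookkeeping exercise that designs the two equations in (5) to be exactly the conditions needed to kill the $\varepsilon^0$ and $\varepsilon^1$ contributions of the formal expansion. The two small points requiring attention are the Lorentzian signature $(-,+,+,+)$ (so that $(\boldsymbol{\nabla}^\alpha\omega)(\boldsymbol{\nabla}_\alpha\omega)=-(\partial_t\omega)^2+|\nabla\omega|^2$ and $\Box=-\partial_t^2+\Delta$, with the corresponding sign on the term coming from $\partial^\alpha$ hitting $e^{i\omega/\varepsilon}$ twice), and the symmetrisation of the two cross terms $\tfrac{i}{\varepsilon}(\partial^\alpha\omega)(\partial_\alpha A)$ and $\tfrac{i}{\varepsilon}(\partial_\alpha\omega)(\partial^\alpha A)$ produced by the product rule, which combine to give the factor $2$ in front of $(\boldsymbol{\nabla}^\alpha\omega)(\boldsymbol{\nabla}_\alpha A)$ that matches exactly the transport equation in (5).
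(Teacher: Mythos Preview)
Your proof is correct and follows exactly the same approach as the paper: a direct computation of $\varepsilon^2\Box\Phi_1^\varepsilon-\Phi_1^\varepsilon-2V'(|\Phi_1^\varepsilon|^2)\Phi_1^\varepsilon$, grouping by powers of $\varepsilon$, and using the two equations of \eqref{eq:approxsystWKB} to kill the $\varepsilon^0$ and $\varepsilon^1$ terms. You in fact give more detail than the paper, which merely writes the resulting identity and declares it $O(\varepsilon^2)$.
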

\begin{proof}
    By direct calculations we get 
    \begin{equation}   
\varepsilon^2\Box\Phi_1^\varepsilon-\Phi_1^\varepsilon-2V'(|\Phi_1^\varepsilon|^2)\Phi_1^\varepsilon=e^{i\frac{\omega}{\varepsilon}}(-\textbf{D}^\alpha \omega\textbf{D}_\alpha \omega-1-2V'(|A|^2))+\varepsilon ie^{i\frac{\omega}{\varepsilon}}(2\textbf{D}^\alpha \omega\textbf{D}_\alpha A+A\Box \omega)+\varepsilon^2e^{i\frac{\omega}{\varepsilon}}(\Box A)=O(\varepsilon^2).
\end{equation}
\end{proof}

\begin{rem}
\label{rem:eikonalWKB}
The equation 
\begin{equation}
    \label{eq:eikonalWKB}
    \textbf{D}^\alpha \omega\textbf{D}_\alpha \omega=-1-2V'(|A|^2)\\
\end{equation}
is called the eikonal equation. In the case of plane waves, $\omega=x^\alpha \textbf{k}_\alpha$ with $\textbf{k}$ and $A$ constant, we recover the dispersion relation 
\begin{equation}
    \label{eq:dispersionWKB}
    \textbf{k}^\alpha \textbf{k}_\alpha=-1-2V'(|A|^2).\\
\end{equation}
\end{rem}

\begin{propal}
    Let $(\omega,A)$ be solution to \eqref{eq:approxsystWKB}, then $(\textbf{U}_\alpha,\rho)=(\textbf{D}_\alpha \omega,|A|^2)$ is solution to 
     \begin{equation}
    \label{eq:approxsysteulerWKB}
\begin{cases}
    \textbf{U}^\alpha\textbf{D}_\alpha \textbf{U}_\beta+\textbf{D}_\beta V'(\rho)=0,\\
    \textbf{U}^\alpha\textbf{D}_\alpha \rho+\rho\textbf{D}_\alpha \textbf{U}^\alpha=0,
    \end{cases}
\end{equation}
and $\textbf{U}$ is a timelike vector field with the normalization 
\begin{equation}
   \label{eq:eikonalnormaliz}
    \textbf{U}^\alpha \textbf{U}_\alpha=-1-2V'(\rho).
\end{equation}
\end{propal}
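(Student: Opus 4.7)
The plan is to verify the three claims in order: first the normalization identity, then the timelike character of $\textbf{U}$, then the momentum equation, and finally the continuity equation. The normalization is immediate: by the definitions $\textbf{U}_\alpha = \boldsymbol{\nabla}_\alpha \omega$ and $\rho = |A|^2$, the eikonal equation $\boldsymbol{\nabla}^\alpha\omega \boldsymbol{\nabla}_\alpha \omega = -1-2V'(|A|^2)$ rewrites literally as $\textbf{U}^\alpha \textbf{U}_\alpha = -1-2V'(\rho)$. Since we only consider defocusing polynomial potentials $V'(x)=x^{\gamma-1}$ with $\gamma\geq 2$, one has $V'(\rho)\geq 0$, hence $\textbf{U}^\alpha \textbf{U}_\alpha \leq -1<0$, so $\textbf{U}$ is everywhere timelike.

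For the momentum equation, I would exploit the fact that $\textbf{U}$ is a gradient, so that its covariant derivative is symmetric. On Minkowski spacetime,
\begin{equation*}
\boldsymbol{\nabla}_\alpha \textbf{U}_\beta = \boldsymbol{\nabla}_\alpha \boldsymbol{\nabla}_\beta \omega = \boldsymbol{\nabla}_\beta \boldsymbol{\nabla}_\alpha \omega = \boldsymbol{\nabla}_\beta \textbf{U}_\alpha.
\end{equation*}
Contracting with $\textbf{U}^\alpha$ and using this symmetry gives $\textbf{U}^\alpha \boldsymbol{\nabla}_\alpha \textbf{U}_\beta = \textbf{U}^\alpha \boldsymbol{\nabla}_\beta \textbf{U}_\alpha = \tfrac{1}{2}\boldsymbol{\nabla}_\beta(\textbf{U}^\alpha \textbf{U}_\alpha)$. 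Plugging in the normalization from the first step yields $\textbf{U}^\alpha \boldsymbol{\nabla}_\alpha \textbf{U}_\beta = -\boldsymbol{\nabla}_\beta V'(\rho)$, which is the first equation of \eqref{eq:approxsysteulerWKB}.

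For the continuity equation, I would combine the transport equation for $A$ with its complex conjugate. Rewriting the second equation of \eqref{eq:approxsystWKB} as $2\textbf{U}^\alpha \boldsymbol{\nabla}_\alpha A + A\, \boldsymbol{\nabla}_\alpha \textbf{U}^\alpha = 0$ and taking the conjugate (using that $\omega$, hence $\textbf{U}$, is real), one gets the same relation with $A$ replaced by $\bar A$. Multiplying the first by $\bar A$, the conjugate one by $A$, and summing gives
\begin{equation*}
2\textbf{U}^\alpha \boldsymbol{\nabla}_\alpha(A\bar A) + 2|A|^2\, \boldsymbol{\nabla}_\alpha \textbf{U}^\alpha = 0,
\end{equation*}
which is exactly $\textbf{U}^\alpha \boldsymbol{\nabla}_\alpha \rho + \rho\, \boldsymbol{\nabla}_\alpha \textbf{U}^\alpha = 0$.

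None of the steps presents a real obstacle: the computation is essentially algebraic once one realizes that the momentum equation follows from differentiating the eikonal relation and the continuity equation from combining the transport equation with its conjugate. The only conceptual ingredient is the symmetry $\boldsymbol{\nabla}_\alpha\boldsymbol{\nabla}_\beta \omega = \boldsymbol{\nabla}_\beta\boldsymbol{\nabla}_\alpha \omega$, which is trivial in Minkowski but would require the torsion-free property of the Levi-Civita connection in a curved setting.
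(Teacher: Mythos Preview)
Your proof is correct and follows essentially the same approach as the paper: both derive the momentum equation by differentiating the eikonal relation (using the symmetry $\boldsymbol{\nabla}_\alpha\boldsymbol{\nabla}_\beta\omega=\boldsymbol{\nabla}_\beta\boldsymbol{\nabla}_\alpha\omega$), obtain the continuity equation by combining the transport equation for $A$ with its conjugate, and deduce the normalization and timelike character directly from the eikonal equation together with $V'(\rho)\geq 0$. The only cosmetic difference is the order in which the three claims are treated.
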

\begin{proof}
    By direct calculations, we use \eqref{eq:eikonalWKB} to get 
    \begin{align}
          \textbf{U}^\alpha\textbf{D}_\alpha \textbf{U}_\beta+\textbf{D}_\beta V'(\rho)=\textbf{D}^\alpha \omega\textbf{D}_\alpha\textbf{D}_\beta \omega+\textbf{D}_\beta V'(|A|^2)=\frac{1}{2}\textbf{D}_\beta(\textbf{D}^\alpha \omega\textbf{D}_\alpha \omega+1+2V'(|A|^2))=0,
    \end{align}
    and, with the second equation of \eqref{eq:approxsystWKB}, 
     \begin{align}
          \textbf{U}^\alpha\textbf{D}_\alpha \rho+\rho\textbf{D}_\alpha \textbf{U}^\alpha= \textbf{D}^\alpha \omega\textbf{D}_\alpha |A|^2+|A|^2\Box \omega=A(\textbf{D}^\alpha \omega\textbf{D}_\alpha \overline{A}+\frac{1}{2}\overline{A}\Box \omega)+\overline{A}(\textbf{D}^\alpha \omega\textbf{D}_\alpha A+\frac{1}{2}A\Box \omega)=0.
    \end{align}
    Moreover, $\textbf{U}^\alpha$ satisfies the equation \eqref{eq:eikonalnormaliz} by definition and is timelike because $V'(\rho)$ is positive ($\rho$ is positive by definition and $V'(x)=x^{\gamma-1}$).
\end{proof}
\begin{rem}
\label{rem:jetrhoWKB}
   We see that the momentum $\textbf{J}^\varepsilon$ and the density $\rho^\varepsilon$ of $\Phi_1^\varepsilon$ from Proposition \ref{propal:approxWKB}, are such that
    \begin{align}
        &\textbf{J}^\varepsilon=\frac{i}{2}(\Phi_1^\varepsilon\overline{\textbf{D}\Phi_1^\varepsilon}-\overline{\Phi_1^\varepsilon}\textbf{D}\Phi_1^\varepsilon)=\textbf{D} \omega|A|^2+O(\varepsilon)=\textbf{U}\rho+O(\varepsilon),\\
        &\rho^\varepsilon=|\Phi_1^\varepsilon|^2=|A|^2=\rho,
    \end{align}
    so that $\textbf{U}\rho$ and $\rho$ give a good approximation of them when $\varepsilon$ is small.
\end{rem}

\subsection{Main results}
\label{subsection:mainresults}
In this section, we give the two main Theorems. The first Theorem states that the semi-classical limit of KG \eqref{eq:KGPintro} is the REP system \eqref{eq:EulerREPintro} for a certain class of initial data. Then, the second Theorem states that the fluid limit is in fact the relativistic barotropic Euler system \eqref{eq:relatEulintro} if we look at the right quantities. We start by giving the definition of well-prepared initial data for the REP system \eqref{eq:EulerREPintro}.\\
\begin{defi}
    \label{defi:wellprepREP}
    Let $(\mathscr{U}^0,\mathscr{U},\varrho)$ be initial data for \eqref{eq:EulerREPintro}, i.e. $(\textbf{U}^0,\textbf{U},\rho)|_{t=0}=(\mathscr{U}^0,\mathscr{U},\varrho)$. The initial data are well-prepared if 
    \begin{align}
        &\varrho\geq0,\;\; \mathscr{U}^0>0 &-\mathscr{U}^0\mathscr{U}_0+\mathscr{U}^i\mathscr{U}_i+2V'(\varrho)=-1, 
    \end{align}
    and if $(\sqrt{\varrho},\mathscr{U}^0,\mathscr{U})\in (H^4)^3$. Moreover, if $\gamma\in(2,3)\cup(3,4)\cup(4,5)$ the quantity $\sqrt{\varrho}^{\gamma-1}$ needs to be in $H^4$.\\
\end{defi}

\begin{unTheorem}
\label{unTheorem:TH1mainth}
    Let $(\varphi^\varepsilon,\dot{\varphi}^\varepsilon)_{0<\varepsilon<1}$ be a family of initial data for \eqref{eq:KGPKGP} and let $(\mathscr{U}^0,\mathscr{U},\varrho)$ be initial data for \eqref{eq:EulerbaseREP}. We assume that :
\begin{enumerate}
\label{enumerate:listmainth}
    \item \label{item:item1mainth}The initial data have the regularity $(\varphi^\varepsilon,\dot{\varphi}^\varepsilon)_{0<\varepsilon<1}\in H^{2}\times H^{1}$ and $(\mathscr{U}^0,\mathscr{U},\varrho)\in(H^{4})^3$. 
     \item \label{item:item2mainth}There exists a constant $c_0>0$ such that 
     \begin{align}
         \int_{\mathbb{R}^3}{\frac{\varepsilon^2|\nabla\varphi^\varepsilon|^2}{2}+\frac{\varepsilon^2|\dot{\varphi}^\varepsilon|^2}{2}+\frac{|\varphi^\varepsilon|^2}{2}+V(|\varphi^\varepsilon|^2)dx}+||\mathscr{U}||_{H^4}^2+||\mathscr{U}^0||_{H^4}^2+||\varrho||_{H^4}^2\leq c_0,
     \end{align}
     in particular, the energy of the family $(\varphi^\varepsilon,\dot{\varphi}^\varepsilon)_{0<\varepsilon<1}$ is uniformly bounded.
    \item \label{item:item3mainth}The initial data $(\mathscr{U}^0,\mathscr{U},\varrho)$ are well-prepared, see definition \ref{defi:wellprepREP}.
    \item \label{item:item4mainth}At $t=0$, the modulated energy $H^\varepsilon_0$ of \eqref{eq:moduH0} satisfies 
    \begin{align}     
\label{eq:H0smallmainth}
H^\varepsilon_0(\varphi^\varepsilon,\dot{\varphi}^\varepsilon,\mathscr{U}^0,\mathscr{U},\varrho)=O(\varepsilon^2).
    \end{align}
\end{enumerate}
Then, 
\begin{enumerate}[label=\roman*)]
 \label{itemize:pointsmainth}
	\item  \label{item:point1mainth} There exist $t_\varepsilon<\infty$ and  $\Phi^\varepsilon\in\bigcap^{2}_{j=0}C^{2-j}([0,t_\varepsilon],H^{j})$ for any $\varepsilon\in(0,1)$, such that $(\Phi^\varepsilon)_{0<\varepsilon<1}$ is a family of solutions to \eqref{eq:KGPKGP} with $(\Phi^\varepsilon,\partial_t\Phi^\varepsilon)|_{t=0}=(\varphi^\varepsilon,\dot{\varphi}^\varepsilon)$, and $(\textbf{U},\rho)\in(\bigcap^{4}_{j=0}C^{4-j}([0,T],H^{j}))^2$ a solution to \eqref{eq:EulerbaseREP} for some $T<+\infty$ with $(\textbf{U}^0,\textbf{U},\rho)|_{t=0}=(\mathscr{U}^0,\mathscr{U},\varrho)$. For $2\leq\gamma\leq3$, we can pick $t_\varepsilon=T$.
    \item \label{item:point2mainth}  For all $t\in[0,\min(t_\varepsilon,T)]$ we have
    \begin{align}      
     H^\varepsilon_0(\Phi^\varepsilon,\partial_t\Phi^\varepsilon,\textbf{U},\rho)(t)=O(\varepsilon^2),
    \end{align}
and so for some $\delta>0$
\begin{align}
\label{eq:conv1mainth} 
    ||\textbf{J}^\varepsilon(t)-\textbf{U}\rho(t)||_{L^{\frac{2\gamma}{\gamma+1}}}+||\textbf{J}^\varepsilon(t)-\textbf{U}\rho(t)||_{L^1}+||\rho^\varepsilon(t)-\rho(t)||_{L^{\gamma}}+||V(\rho^\varepsilon)(t)-V(\rho)(t)||_{L^1}=O(\varepsilon^{\delta}).
\end{align}    
    \item \label{item:point3mainth} 
      For $2\leq\gamma\leq3$, we have the strong statement
      \begin{align}    
      \label{eq:conv2mainth} 
     & \lim_{\varepsilon\to 0}||\textbf{J}^\varepsilon-\textbf{U}\rho||_{L^\infty([0,T],L^{\frac{2\gamma}{\gamma+1}})}+||\textbf{J}^\varepsilon-\textbf{U}\rho||_{L^\infty([0,T],L^1)}=0,\\
     &\lim_{\varepsilon\to 0}||\rho^\varepsilon-\rho||_{L^\infty([0,T],L^{\gamma})}+||V(\rho^\varepsilon)-V(\rho)||_{L^\infty([0,T],L^{1})}=0.
    \end{align}
\end{enumerate}
\end{unTheorem}
\begin{unTheorem}
\label{unTheorem:TH2mainth} 
     Let $(\textbf{u},\mu,p,\Gamma)=(\textbf{u}(\textbf{U},\rho,V'(\rho)),\mu(\textbf{U},\rho,V'(\rho)),p(\textbf{U},\rho,V'(\rho)),\Gamma(\textbf{U},\rho,V'(\rho)))$ be given by the change of unknown of Proposition \ref{propal:equivrelatEul} with respect to $(\textbf{U},\rho)$, the solution to \eqref{eq:EulerREPintro} given in Theorem \ref{unTheorem:TH1mainth}. Then, $(\textbf{u},\mu,p)$ corresponds to the four-velocity, the energy density and the pressure solution to the relativistic barotropic Euler \eqref{eq:relatEulintro} and $\Gamma$ to a scaling factor such that, under the assumptions of Theorem \ref{unTheorem:TH1mainth},
    \begin{enumerate}[label=\roman*)]
    \label{itemize:pointsmainth2}
            \item \label{item:point4mainth}For all $t\in[0,\min(t_\varepsilon,T)]$ and some $\delta>0$, we have
\begin{align}
\label{eq:conv3mainth}
    &||\textbf{J}^\varepsilon(t)-\Gamma\textbf{u}(\mu+p)(t)||_{L^{\frac{2\gamma}{\gamma+1}}}+||\textbf{J}^\varepsilon(t)-\Gamma\textbf{u}(\mu+p)(t)||_{L^1}=O(\varepsilon^{\delta}),\\
    &||\rho^\varepsilon(t)-\Gamma^2(\mu+p)(t)||_{L^{\gamma}}+||(\gamma-1)V(\rho^\varepsilon)(t)-p(t)||_{L^{1}}=O(\varepsilon^{\delta}).
\end{align}    
    \item \label{item:point5mainth} For $2\leq\gamma\leq3$, we have the strong statement
      \begin{align}     
      \label{eq:conv4mainth}
      &\lim_{\varepsilon\to 0}||\textbf{J}^\varepsilon-\Gamma\textbf{u}(\mu+p)||_{L^\infty([0,T],L^{\frac{2\gamma}{\gamma+1}})}+||\textbf{J}^\varepsilon-\Gamma\textbf{u}(\mu+p)||_{L^\infty([0,T],L^1)}=0,\\
      &\lim_{\varepsilon\to 0}||\rho^\varepsilon-\Gamma^2(\mu+p)||_{L^\infty([0,T],L^{\gamma})}+||V(\rho^\varepsilon)-p||_{L^\infty([0,T],L^{1})}=0.
    \end{align}
    \end{enumerate}
    
\end{unTheorem}
The assumption \ref{item:item1mainth}  is present for obvious technical purposes. The second \ref{item:item2mainth} is not restrictive in the sense that the typical solutions we are interested in satisfy it. Indeed, even explicit high frequency ansatz\footnote{See for example the ansatz for the approximate solution of section \ref{subsection:WKB}.}, for which the derivatives behave badly, have energies that are uniformly bounded in $\varepsilon$. This comes from the presence of $\varepsilon$ in the definition of the energy (given in \eqref{eq:EenergyKGP} in section \ref{section:KGPsec}) to balance the size of the derivatives. The assumptions \ref{item:item3mainth} (which refers to Definition \ref{defi:wellprepREP}) and \ref{item:item4mainth} are more generic than they seem.
Typically, one can think of exact solutions $\Phi^\varepsilon$ to \eqref{eq:KGPintro} as based on mono phase high frequency approximate\footnote{We recall that we only use the WKB method heuristically here.} solution 
\begin{equation}
\label{eq:WKBtypeinitdata}
    \Phi_1^\varepsilon=e^{i\frac{\omega}{\varepsilon}}A
\end{equation}
as in section \ref{subsection:WKB}, and with initial data $(\varphi_1^\varepsilon,\dot{\varphi}_1^\varepsilon)_{0<\varepsilon<1}=(a e^{\frac{v}{\varepsilon}},\dot{a} e^{\frac{v}{\varepsilon}}+i\frac{\dot{v}}{\varepsilon}a^\varepsilon e^{\frac{v}{\varepsilon}})$. In particular, the initial data for the phase $(\omega,\partial_t\omega)|_{t=0}=(v,\dot{v})$ must be solutions to the eikonal equation \eqref{eq:eikonalWKB}. Then, one can pick $(\mathscr{U}_0,\mathscr{U})=(\dot{v},\nabla v)$ and $\varrho=|a|^2$, this implies the normalization for $(\mathscr{U}_0,\mathscr{U},\varrho)$ and the fact that $\varrho\geq0$ in Definition \ref{defi:wellprepREP}. The ansatz \eqref{eq:WKBtypeinitdata} gives natural profiles $(\mathscr{U}^0,\mathscr{U},\varrho)$ that fit the criteria \ref{item:item3mainth} and \ref{item:item4mainth}\footnote{If they have the good regularity, see below.}. In other words, the Theorem typically deals with mono phase high frequency WKB ansatz. \\
Moreover, we observe that even if $v=0$ ($\varphi_1^\varepsilon$ is not initially highly oscillating) then (due to the eikonal equation \eqref{eq:eikonalWKB}) $\dot{v}$ is not zero. The solution becomes instantaneously highly oscillatory. This remark is made in \cite{zbMATH05243173} on the same type of initial data for the semi-classical Schrödinger equation. \\
More generally, we see that the solutions "have to be" highly oscillating in at least one direction because of the scaling of the equation, the derivatives of $\Phi^\varepsilon$ have to be large in comparison with its amplitude. The assumption \ref{item:item4mainth} (and more generally the modulated energy approach) morally means that there is locally only one direction in which the solution is high frequency. This is captured by the vector field $\textbf{U}$.\\
Finally, we require some regularity for $\sqrt{\varrho}$ to have well-prepared initial data. This restriction makes sense because $\rho$ is thought to be an approximation of $\rho^\varepsilon=|\Phi^\varepsilon|^2$ from which we expect that $\sqrt{\rho}$ has the regularity of $|\Phi^\varepsilon|$. This and the other assumptions of Definition \ref{defi:wellprepREP} are mandatory for the proof of the well-posedness of the REP system in Proposition \ref{propal:wellposedEulerEP}. \\
It is already interesting to note that the properties of the well-prepared $(\mathscr{U}^0,\mathscr{U},\varrho)$ propagate, see Proposition \ref{propal:propagwellprepREP}. 
In particular, we take advantage of the eikonal equation (the normalization of the vector field $\textbf{U}$ in Proposition \ref{propal:propagwellprepREP}) during the proof of Proposition \ref{propal:wellposedEulerEP} for the well posedness of the REP system and in section \ref{subsection:Pgproof} to prove the propagation of smallness for the modulated energy. We note also that this particular normalization is a direct equivalent of the $-1$ normalization for the usual relativistic Euler, see section \ref{section:relatEul}.  \\ \\
The second Theorem is a direct consequence of the first and the equivalence of the REP system and the relativistic Euler equations given by Proposition \ref{propal:equivrelatEul}.
\subsection{Comments}
\label{subsection:comments}
Overall, we prove that the semiclassical (monokinetic) limit of the massive Klein-Gordon equation with a defocusing power-law potential is the REP system, a relativistic Euler system with a potential. The most interesting cases, for which we have the convergence on an interval independent of $\varepsilon$, are the subcritical and critical ones, when $2\leq\gamma\leq3$.  Moreover, the REP system is equivalent to the usual relativistic barotropic Euler equations (see section \ref{section:relatEul}). This implies that the semiclassical (monokinetic) limit of massive KG with a defocusing power-law potential is the relativistic Euler equations up to a rescaling. This is summed up in the following picture \\
\begin{center}
\begin{tikzpicture}[
roundnode/.style={circle, draw=black!60, fill=black!5, very thick, minimum size=7mm},
squarednode/.style={rectangle, draw=black!60, fill=black!5, very thick, minimum size=5mm},
]
\node[squarednode]      (sch)   at (3,3)                       {defocusing Schrödinger};
\node[squarednode]      (kg)        [below=of sch]                       {defocusing massive Klein-Gordon};
\node[squarednode]      (relat)     [right=of kg]                        {relativistic barotropic Euler};
\node[squarednode]      (euler)          [above=of relat]                     {compressible Euler};

\draw[->] (kg.north) -- (sch.south) node[midway,left] {(1)};
\draw[->] (kg.east) -- (relat.west)node[midway,above] {(5)};
\draw[->] (sch.east) -- (euler.west)node[midway,above] {(2)};
\draw[->] (relat.north) -- (euler.south)node[midway,right] {(4)};
\draw[->] (kg.north) -- (euler.south)node[midway,above] {(3)};
\draw[thick,->] (0,0) -- (4.5,0)node[midway,below] {semi-classical limit};
\draw[thick,->] (0,0) -- (0,4.5)node[midway,above,sloped]{non-relativistic limit};;
\end{tikzpicture}
\end{center}
\begin{itemize}
   \item[--]The point (1) is proved in \cite{zbMATH01837395}, \cite{zbMATH01747314} for any dimension greater than 2, and also in various settings in \cite{zbMATH03871951}, \cite{NAJMAN1990217} \cite{zbMATH06303453}, \cite{zbMATH01650142}
   \cite{pasquali2018dynamicsnonlinearkleingordonequation}, \cite{lei2023nonrelativistic} and \cite{bambusi2025nonrelativisticlimitnonlinear}. 
   \item [--]The point (2) is proved in \cite{zbMATH05243173} via the modulated energy method and in \cite{zbMATH01132286} via the WKB method. One can also look at \cite{zbMATH05000052} for an application of modulated energy method on the study of the semi-classical limit of Gross-Pitaevskii, a very related equation. 
   \item [--]The point (3) is proved in \cite{zbMATH06101438} and \cite{zbMATH06039446} via the modulated energy method, the double limit is performed as a simple limit by choosing $c^{-1}=\varepsilon^\kappa$ for some $\kappa>0$.
   \item [--]The point (4) is proved in \cite{zbMATH07119817}, \cite{zbMATH07531778}, \cite{MAI2022336}, \cite{zbMATH05077591}, \cite{zbMATH00840326}, \cite{zbMATH01309659} and \cite{zbMATH07114642} in dimension 1 using various techniques.
   \item [--]The point (5) is proved in the present paper.
\end{itemize}
The main constraints of this result are that:
\begin{itemize}
    \item We only deal with the massive case (see for example this requirement in Proposition \ref{propal:c2c3Cyproof}).
    \item The proof requires a potential (the convergence of the density $\rho^\varepsilon$ is obtained with the potential, see Remark \ref{rem:convdensityneedCyproof}). 
    \item The solution to the REP system is supposed to be regular enough ($\textbf{U}$ and $\rho$ in $H^4$). For example, in \cite{zbMATH01442880} (where the modulated energy method is introduced), the solution to the fluid system is understood in the sense of a dissipative solution (defined in \cite{zbMATH00928933}), with very low regularity. 
    \item We only deal with one direction of oscillation (we see that there is only one vector field $\textbf{U}$).\\
\end{itemize}
\begin{rem} 
\label{rem:instabil} For the last point, we see that the modulated energy method does not say anything about cases where there is more than one direction of high oscillation. In the WKB language, this translates to : the modulated energy method does not deal with multi-phase ansatz. We refer to \cite{METIVIER2009169} and \cite{zbMATH06039454} for general results on multi-phase high-frequency solutions. We conjecture that the WKB multi-phase superposition is unstable for the nonlinear Klein-Gordon equation, as it is for the nonlinear Schrödinger equation. We refer to \cite{carles2023nonlineareffectsmultiphasewkb}, \cite{zbMATH05243173} et \cite{Han_Kwan_2016} on the latter subject.\\
\end{rem}
\begin{rem}
    To recover a non-monokinetic Vlasov system at the semi-classical limit, the approach can be more direct than the WKB method if we pass to the mixed state regime. For example, with the study of the Wigner transform. We refer to \cite{zbMATH00482230}, \cite{zbMATH00203353}, for works on the Schrödinger-Poisson equations. In \cite{Mauser2002SemiclassicalLO} and \cite{Carles_2009}, limitations are presented for the application of this method to the nonlinear defocusing Schrödinger equation. These limitations are linked to the instability mentioned in the previous Remark \ref{rem:instabil}. No works of this type have been done on the semi-classical Klein-Gordon equation as far as we know.\\
\end{rem}
\begin{rem}
We think that our work might be done for other geometries of spacetime, the arguments given in the present paper are not restricted to the Minkowski metric.
\end{rem}
\subsection{Idea of the proof}
\label{subsection:ideaofproof}
We only need to show the first Theorem \ref{unTheorem:TH1mainth} as the second one is a corollary once we have the right change of variable, see Proposition \ref{propal:equivrelatEul}.
Firstly, we need to show the existence of sufficiently regular solutions to \eqref{eq:KGPintro} and \eqref{eq:EulerREPintro} with initial data $(\varphi^\varepsilon,\dot{\varphi}^\varepsilon)_{0<\varepsilon<1}$ and $(\mathscr{U}^0,\mathscr{U},\varrho)$.\\
For the semi-classical Klein-Gordon equation \eqref{eq:KGPintro}, it is easy to prove local well-posedness with energy methods, but we need global well-posedness to have a uniform time of existence for the family of solutions $(\Phi^\varepsilon)_{0<\varepsilon<1}$ and to recover the strong statement \ref{item:point3mainth} of the Theorem \ref{unTheorem:TH1mainth}. The global well-posedness for $2\leq\gamma\leq3$ is a common result given in \cite{Brenner1979,Wahl1981,zbMATH03875887} for the subcritical case and in \cite{zbMATH04202876,zbMATH00011219,zbMATH00168350,zbMATH00703984,zbMATH00817689} for the critical one.
For the REP system, we show in Appendix \ref{section:appendix2} that it is a symmetrizable hyperbolic system (the proof is inspired by \cite{zbMATH04039498}) and so it is locally well-posed for sufficiently regular initial data. We add that $(\textbf{U},\rho)$ must be more regular than what $\Phi^\varepsilon$ needs to be to perform the operation in section \ref{subsection:Cyproof} and section \ref{subsection:Pgproof}. \\
Then, the rest of the proof concerns the convergence of the momentum and the density. The core of the argument is to construct a quantity $H^\varepsilon$ that
\begin{enumerate}[label=\arabic*)]
\label{enumerate:listmodpointIdea}
    \item \label{item:modpoint1Idea} Controls the convergence in the sense that 
    \begin{align}       &H^\varepsilon(t)=O(\varepsilon^2)\implies\\
    &||\textbf{J}^\varepsilon(t)-\textbf{U}\rho(t)||_{L^{\frac{2\gamma}{\gamma+1}}}+||\textbf{J}^\varepsilon(t)-\textbf{U}\rho(t)||_{L^1}+||\rho^\varepsilon(t)-\rho(t)||_{L^\gamma}+||V(\rho^\varepsilon)(t)-V(\rho)(t)||_{L^1}=O(\varepsilon^{\delta})\nonumber,
    \end{align}
    for some $\delta>0$. This is the \textbf{coercivity property}.
    \item \label{item:modpoint2Idea} Propagates the smallness in the sense that 
    \begin{align}       
    H^\varepsilon(0)=O(\varepsilon^2)\implies H^\varepsilon(t)=O(\varepsilon^2),
    \end{align}
    for $t$ finite such that the solutions exist and remain regular. This is the \textbf{propagation property}.
\end{enumerate}
We can directly construct an equivalent of the modulated energy of \cite{zbMATH05243173}, made for the semi-classical limit of Schrödinger, with $H^\varepsilon_0$ defined as in \eqref{eq:moduH0}. We show in Proposition \ref{propal:tryoneh00Cyproof} that $H^\varepsilon_0$ actually satisfies point \ref{item:modpoint1Idea}. Nonetheless, this first candidate eventually fails to satisfy point \ref{item:modpoint2Idea} with the techniques used here\footnote{In fact, we can show that it satisfies point \ref{item:modpoint2Idea} a posetriori, once we show that the new $H^\varepsilon$ satisfies it first.}. \\
To fix that, we build a new "modulated energy" $H^\varepsilon$ based on the full stress-energy tensor $T^\varepsilon$ of \eqref{eq:stressenergintro} (and not just on $T^\varepsilon_{00}$ which corresponds to the energy in the considered frame) that is equivalent to the previous one in terms of control and coercivity, $H^\varepsilon\sim H^\varepsilon_0$, but with a better structure for point \ref{item:modpoint2Idea}. This is proved in Proposition \ref{propal:HgoodfinalCyproof}.\\
The proof follows as the ones of \cite{zbMATH05243173} or \cite{zbMATH06101438}, that is, to show that 
\begin{equation}
\label{eq:modderivIdea}
    \frac{d}{dt}H^\varepsilon\leq H^\varepsilon+O(\varepsilon^2).
\end{equation}
Another novelty is that we have to use the eikonal equation (the unusual normalization of the four-velocity $\textbf{U}$) to show \eqref{eq:modderivIdea}. We also use the fact that the potential is a power law to get the inequality \eqref{eq:modderivIdea} in a straightforward way.\\
The Lebesgue spaces in which the convergences occur are directly linked to the degree $\gamma-1$ of the power law $V'$. It is logical that the convergence of the density takes place in $L^\gamma$ (the norm of the potential energy density) but we can observe that the convergence of the momentum depends on the convergence of the density. In fact, if there is no potential, we cannot prove the convergence of the density, nor the momentum, see Remark \ref{rem:convdensityneedCyproof}.\\
\subsection{Acknowledgments}
The author thanks Cécile Huneau for her precious help during the elaboration of this paper, Daniel Han-Kwan for his important remarks, and the anonymous reviewers for their great recommendations. 
\subsection{Outline of the paper}
\label{subsection:outline}
\begin{itemize}
    \item In section \ref{section:KGPsec} we give all the details on the semi-classical Klein-Gordon equation.
    \item  In section \ref{section:REP} we give all the details on the REP system.
    \item In section \ref{section:defmod} we give the basic definitions for the new modulated energy.
    \item In section \ref{section:Proof} we give the proof of Theorem \ref{unTheorem:TH1mainth}. 
    \item In section \ref{section:relatEul} we show the equivalence between the REP system and the relativistic barotropic Euler system.
    \item In the Appendix \ref{section:appendix2}, we give the well-posedness result for the REP system.
\end{itemize}
\section{Notations}
\label{section:nota}

\begin{nota}
\label{nota:gradNOTA1}
    The symbol $\textbf{D}$ alone denotes the spacetime gradient and we denote $|\textbf{D} f|^2=\textbf{D}f\cdot\overline{\textbf{D}f}=\partial_tf\overline{\partial_tf}+\nabla f\cdot\overline{\nabla f}$ with $\nabla$ the space gradient and $\cdot$ the usual scalar products of both $\mathbb{R}^3$ and $\mathbb{R}^4$.\\
\end{nota}
\begin{nota}
\label{nota:gradNOTA2}
    We denote with bold letters the four-vectors, such as $\textbf{X}$, and with roman letters, $X$ with $X^i=\textbf{X}^i$, the three-dimensional vectors made of its space components. We note $X^0=\textbf{X}^0$ its time component.
\end{nota}
\section{Klein-Gordon equation}
\label{section:KGPsec}
We give the details on the semi-classical version of the massive Klein-Gordon equation with a potential
\begin{equation}
\label{eq:KGPKGP}	\varepsilon^2\Box\Phi^\varepsilon=\Phi^\varepsilon+2V'(|\Phi^\varepsilon|^2)\Phi^\varepsilon.
\end{equation}
\begin{defi}
\label{defi:energytensorKGP}
The stress-energy tensor associated to \eqref{eq:KGPKGP} is 
\begin{equation}
\label{eq:stressenergyKGP}
    T^{KG}[\Phi^\varepsilon]_{\alpha\beta}=\frac{\varepsilon^2}{2}(\textbf{D}_\alpha\Phi^\varepsilon\textbf{D}_\beta\overline{\Phi^\varepsilon}+\textbf{D}_\alpha\overline{\Phi^\varepsilon}\textbf{D}_\beta\Phi^\varepsilon)-\frac{1}{2}g_{\alpha\beta}(\varepsilon^2\textbf{D}_\mu\overline{\Phi^\varepsilon}\textbf{D}^\mu\Phi^\varepsilon+|\Phi^\varepsilon|^2+2V(|\Phi^\varepsilon|^2)),
\end{equation}
its divergence is equal to 0 
\begin{equation}
\label{eq:divstressenergyKGP}
    \textbf{D}_\alpha (T^{KG})^{\alpha}~_\beta=0.
\end{equation}
We also have
\begin{equation}
\label{eq:EstressenergyKGP}
   \int{T^{KG}[\Phi^\varepsilon]_{00}dx}=\mathcal{E}^{KG}[\Phi^\varepsilon],
\end{equation}
where $\mathcal{E}^{KG}$ is the associated conserved energy
\begin{equation}
\label{eq:EenergyKGP}	
	\mathcal{E}^{KG}[\Phi^\varepsilon]=\int_{\mathbb{R}^3}{\frac{\varepsilon^2|\textbf{D}\Phi^\varepsilon|^2}{2}+\frac{|\Phi^\varepsilon|^2}{2}+V(|\Phi^\varepsilon|^2)dx}.
\end{equation}
\end{defi}
\begin{defi}
\label{defi:momentumKGP}
    The momentum (or charge current density) associated to \eqref{eq:KGPKGP} is
\begin{equation}
\label{eq:momentumequationKGP}
\textbf{J}^\varepsilon_\alpha=-\Im(\Phi^\varepsilon\overline{\varepsilon \textbf{D}_\alpha\Phi^\varepsilon}).
\end{equation}
We also set $J^\varepsilon$ from $\mathbb{R}^{1+3}$ to $\mathbb{R}^{3}$ as  $J_i^\varepsilon=\textbf{J}^\varepsilon_i$ and $J_0^\varepsilon=\textbf{J}^\varepsilon_0$. We have the equation 
\begin{equation}
\label{eq:momentumJ0KGP}
\partial_tJ_0^\varepsilon=\nabla\cdot J^\varepsilon,
\end{equation}
written equivalently as 
\begin{equation}
\label{eq:momentumbisKGP}
\textbf{D}_\alpha (\textbf{J}^\varepsilon)^\alpha=0,
\end{equation}
and
\begin{equation}
\label{eq:currentJKGP}
\partial_tJ^\varepsilon= -\Im(\partial_t\Phi^\varepsilon\overline{\varepsilon\nabla\Phi^\varepsilon})+\nabla J_0^\varepsilon.
\end{equation}
\end{defi}
\begin{nota}
\label{nota:densityKGP}
To simplify the notation we set the density $\rho^\varepsilon=|\Phi^\varepsilon|^2$.\\
\end{nota}
\begin{rem}
\label{rem:chargeKGP}	
We see that the density $\rho^\varepsilon$ and the charge density (the component 0 of the momentum four-vector) $(J^\varepsilon)^0$ are different quantities. In the non-relativistic case (for the Schrödinger equation) they are the same quantity, the probability density. We say that $(J^\varepsilon)^0$ is a charge density because it is not positive definite but its total charge is conserved as the total probability for Schrödinger. We have 
\begin{equation}
\label{eq:conservedchargeKGP}
	\frac{d}{dt}\int_{\mathbb{R}^3}{(J^\varepsilon)^0 dx}=0.
\end{equation}
Moreover, we have  
\begin{align}
	&\frac{d}{dt}<x>=\frac{d}{dt}\int_{\mathbb{R}^3}{(J^\varepsilon)^0 x dx}=\int_{\mathbb{R}^3}{J^\varepsilon  dx},
\end{align}
which is an analogous of the Ehrenfest Theorem applied to the position operator. 
\end{rem}
We add the following useful equations to our list.\\
\begin{propal}
    \label{propal:splitequationKGP}
   Let $\Phi^\varepsilon$ be a wave function, then we have 
    \begin{equation}
       \label{eq:splitJJ1KGP}
       \frac{(\textbf{J}^\varepsilon)^\alpha(\textbf{J}^\varepsilon)_\alpha}{\rho^\varepsilon}=-\varepsilon^2\textbf{D}_\alpha\sqrt{\rho^\varepsilon}\textbf{D}^\alpha\sqrt{\rho^\varepsilon}+\varepsilon^2\textbf{D}_\alpha\Phi^\varepsilon\overline{\textbf{D}^\alpha\Phi^\varepsilon},
    \end{equation}
     \begin{equation}
         \label{eq:splitJJ2KGP}
         \frac{\textbf{J}^\varepsilon\cdot\textbf{J}^\varepsilon}{\rho^\varepsilon}=-\varepsilon^2\textbf{D}\sqrt{\rho^\varepsilon}\cdot\textbf{D}\sqrt{\rho^\varepsilon}+\varepsilon^2|\textbf{D}\Phi^\varepsilon|^2,
    \end{equation}
   and if $\Phi^\varepsilon$ is a solution to \eqref{eq:KGPKGP}, then the momentum $\textbf{J}^\varepsilon$ and the density $\rho^\varepsilon$ are solutions to
    \begin{equation}
         \label{eq:splitJJ3KGP}
         \frac{(\textbf{J}^\varepsilon)^\alpha(\textbf{J}^\varepsilon)_\alpha}{\rho^\varepsilon}=\varepsilon^2\sqrt{\rho^\varepsilon}\Box\sqrt{\rho^\varepsilon}-\rho^\varepsilon-2V'(\rho^\varepsilon)\rho^\varepsilon.
            \end{equation}
\end{propal}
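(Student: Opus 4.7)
The three identities are essentially algebraic manipulations: the first two come from expanding the bilinear $\textbf{J}^\varepsilon \otimes \textbf{J}^\varepsilon$ and comparing with the gradient of $\rho^\varepsilon$, while the third uses the Klein-Gordon equation itself and the first identity.

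The plan is the following. For \eqref{eq:splitJJ1KGP}, I will expand $(\textbf{J}^\varepsilon)^\alpha(\textbf{J}^\varepsilon)_\alpha$ directly from the definition \eqref{eq:momentumequationKGP}. Setting $A_\alpha=\boldsymbol{\nabla}_\alpha\Phi^\varepsilon$ and using the fact that $\overline{A_\alpha}=\boldsymbol{\nabla}_\alpha\overline{\Phi^\varepsilon}$, the product $(\Phi^\varepsilon \overline{A^\alpha}-\overline{\Phi^\varepsilon}A^\alpha)(\Phi^\varepsilon\overline{A_\alpha}-\overline{\Phi^\varepsilon}A_\alpha)$ expands into four terms. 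The two cross-terms combine to $-2\rho^\varepsilon A^\alpha\overline{A_\alpha}$, and the remaining two pure terms together make exactly $\boldsymbol{\nabla}^\alpha\rho^\varepsilon\boldsymbol{\nabla}_\alpha\rho^\varepsilon$ once one uses the Leibniz identity $\boldsymbol{\nabla}_\alpha\rho^\varepsilon=\overline{\Phi^\varepsilon}A_\alpha+\Phi^\varepsilon\overline{A_\alpha}$. Dividing by $\rho^\varepsilon$ and invoking $\boldsymbol{\nabla}_\alpha\sqrt{\rho^\varepsilon}\boldsymbol{\nabla}^\alpha\sqrt{\rho^\varepsilon}=\frac{1}{4\rho^\varepsilon}\boldsymbol{\nabla}_\alpha\rho^\varepsilon\boldsymbol{\nabla}^\alpha\rho^\varepsilon$ then yields \eqref{eq:splitJJ1KGP}.

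The identity \eqref{eq:splitJJ2KGP} follows from the exact same algebraic manipulation, but now contracting over spatial indices only (recalling notation \ref{nota:gradNOTA1}, where $\boldsymbol{\nabla} f \cdot \overline{\boldsymbol{\nabla} f}$ includes the $\partial_t f\,\overline{\partial_t f}$ term); no use of the equation is required for either \eqref{eq:splitJJ1KGP} or \eqref{eq:splitJJ2KGP}, they are pure consequences of the definitions.

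For \eqref{eq:splitJJ3KGP}, I will multiply the Klein-Gordon equation \eqref{eq:KGPKGP} by $\overline{\Phi^\varepsilon}$ and take the real part, which produces
\begin{equation*}
\varepsilon^2\,\Re(\overline{\Phi^\varepsilon}\Box\Phi^\varepsilon)=\rho^\varepsilon+2V'(\rho^\varepsilon)\rho^\varepsilon.
\end{equation*}
Next, I will use the two bilinear Leibniz identities
\begin{equation*}
\Box\rho^\varepsilon=2\Re(\overline{\Phi^\varepsilon}\Box\Phi^\varepsilon)+2\,\boldsymbol{\nabla}_\alpha\Phi^\varepsilon\,\overline{\boldsymbol{\nabla}^\alpha\Phi^\varepsilon},\qquad \Box\rho^\varepsilon=2\sqrt{\rho^\varepsilon}\,\Box\sqrt{\rho^\varepsilon}+2\,\boldsymbol{\nabla}_\alpha\sqrt{\rho^\varepsilon}\,\boldsymbol{\nabla}^\alpha\sqrt{\rho^\varepsilon}
\end{equation*}
to replace $\Re(\overline{\Phi^\varepsilon}\Box\Phi^\varepsilon)$ by $\sqrt{\rho^\varepsilon}\Box\sqrt{\rho^\varepsilon}+\boldsymbol{\nabla}_\alpha\sqrt{\rho^\varepsilon}\boldsymbol{\nabla}^\alpha\sqrt{\rho^\varepsilon}-\boldsymbol{\nabla}_\alpha\Phi^\varepsilon\overline{\boldsymbol{\nabla}^\alpha\Phi^\varepsilon}$. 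The combination of gradients appearing here is, by \eqref{eq:splitJJ1KGP}, exactly $-\frac{1}{\varepsilon^2}\,\frac{(\textbf{J}^\varepsilon)^\alpha(\textbf{J}^\varepsilon)_\alpha}{\rho^\varepsilon}$, and rearranging gives \eqref{eq:splitJJ3KGP}.

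The only mild subtlety is that $\sqrt{\rho^\varepsilon}$ need not be smooth on the zero set of $\Phi^\varepsilon$, so the identities should be read pointwise on $\{\Phi^\varepsilon\neq 0\}$ (extended by continuity where needed); this is not a real obstacle since each displayed equation can equivalently be multiplied through by $\rho^\varepsilon$ and then becomes a polynomial identity in $\Phi^\varepsilon,\overline{\Phi^\varepsilon}$ and their derivatives, valid on all of spacetime.
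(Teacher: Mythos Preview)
Your approach is essentially the same as the paper's: direct expansion of $(\textbf{J}^\varepsilon)^\alpha(\textbf{J}^\varepsilon)_\alpha$ for \eqref{eq:splitJJ1KGP}--\eqref{eq:splitJJ2KGP}, and Leibniz identities plus the equation for \eqref{eq:splitJJ3KGP}. One small bookkeeping slip: the two ``pure'' terms $(\Phi^\varepsilon)^2\overline{A^\alpha A_\alpha}+(\overline{\Phi^\varepsilon})^2 A^\alpha A_\alpha$ are not exactly $\boldsymbol{\nabla}^\alpha\rho^\varepsilon\boldsymbol{\nabla}_\alpha\rho^\varepsilon$ but rather $\boldsymbol{\nabla}^\alpha\rho^\varepsilon\boldsymbol{\nabla}_\alpha\rho^\varepsilon-2\rho^\varepsilon A^\alpha\overline{A_\alpha}$, so together with the cross-terms you get $\boldsymbol{\nabla}^\alpha\rho^\varepsilon\boldsymbol{\nabla}_\alpha\rho^\varepsilon-4\rho^\varepsilon A^\alpha\overline{A_\alpha}$; the factor $4$ (not $2$) is what makes the identity come out right after multiplying by $-\varepsilon^2/4$ and dividing by $\rho^\varepsilon$. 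For \eqref{eq:splitJJ3KGP} your route via the two expressions for $\Box\rho^\varepsilon$ is a slightly tidier packaging of exactly the cancellation the paper carries out term by term.
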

\begin{proof}
    By direct calculations, we get  
               \begin{align*}
         \frac{(\textbf{J}^\varepsilon)^\alpha(\textbf{J}^\varepsilon)_\alpha}{|\Phi^\varepsilon|^2}&=-\frac{\varepsilon^2(\Phi^\varepsilon\overline{\textbf{D}_\alpha\Phi^\varepsilon}\Phi^\varepsilon\overline{\textbf{D}^\alpha\Phi^\varepsilon}+\overline{\Phi^\varepsilon}\textbf{D}_\alpha\Phi^\varepsilon\overline{\Phi^\varepsilon}\textbf{D}^\alpha\Phi^\varepsilon-2\textbf{D}^\alpha\Phi^\varepsilon\overline{\textbf{D}_\alpha\Phi^\varepsilon}|\Phi^\varepsilon|^2)}{4|\Phi^\varepsilon|^2}\\
     &=-\varepsilon^2(\frac{\textbf{D}_\alpha|\Phi^\varepsilon|\Phi^\varepsilon\overline{\textbf{D}^\alpha\Phi^\varepsilon}}{2|\Phi^\varepsilon|}-\frac{\overline{\Phi^\varepsilon}\textbf{D}^\alpha\Phi^\varepsilon\Phi^\varepsilon\overline{\textbf{D}_\alpha\Phi^\varepsilon}}{4|\Phi^\varepsilon|^2}+\frac{\textbf{D}_\alpha|\Phi^\varepsilon|\overline{\Phi^\varepsilon}\textbf{D}^\alpha\Phi^\varepsilon}{2|\Phi^\varepsilon|}-\frac{\overline{\Phi^\varepsilon}\textbf{D}^\alpha\Phi^\varepsilon\Phi^\varepsilon\overline{\textbf{D}_\alpha\Phi^\varepsilon}}{4|\Phi^\varepsilon|^2})\\
     &+\varepsilon^2\frac{2\textbf{D}^\alpha\Phi^\varepsilon\overline{\textbf{D}_\alpha\Phi^\varepsilon}|\Phi^\varepsilon|^2}{4|\Phi^\varepsilon|^2}\\
     &=-\varepsilon^2\textbf{D}_\alpha|\Phi^\varepsilon|\textbf{D}^\alpha|\Phi^\varepsilon|+\varepsilon^2\textbf{D}_\alpha\Phi^\varepsilon\overline{\textbf{D}^\alpha\Phi^\varepsilon},
    \end{align*}
    for the first equality. The second is obtained with the same techniques. Then, we have
    \begin{align*}
         \frac{(\textbf{J}^\varepsilon)^\alpha(\textbf{J}^\varepsilon)_\alpha}{|\Phi^\varepsilon|^2}&=-\varepsilon^2\textbf{D}^\alpha(\textbf{D}_\alpha|\Phi^\varepsilon||\Phi^\varepsilon|)+\varepsilon^2\Box|\Phi^\varepsilon||\Phi^\varepsilon|+\varepsilon^2\frac{\textbf{D}^\alpha(\textbf{D}_\alpha\Phi^\varepsilon\overline{\Phi^\varepsilon})+\textbf{D}^\alpha(\Phi^\varepsilon\textbf{D}_\alpha\overline{\Phi^\varepsilon})}{2}-\varepsilon^2\frac{\Box\Phi^\varepsilon\overline{\Phi^\varepsilon}+\Phi^\varepsilon\Box\overline{\Phi^\varepsilon}}{2}\\
         &=-\varepsilon^2\frac{\textbf{D}^\alpha(\Phi^\varepsilon\textbf{D}_\alpha\overline{\Phi^\varepsilon}+\Phi^\varepsilon\textbf{D}_\alpha\overline{\Phi^\varepsilon})}{2}+\varepsilon^2\Box|\Phi^\varepsilon||\Phi^\varepsilon|+\varepsilon^2\frac{\textbf{D}^\alpha(\textbf{D}_\alpha\Phi^\varepsilon\overline{\Phi^\varepsilon})+\textbf{D}^\alpha(\Phi^\varepsilon\textbf{D}_\alpha\overline{\Phi^\varepsilon})}{2}\\
          &-\varepsilon^2\frac{\Box\Phi^\varepsilon\overline{\Phi^\varepsilon}+\Phi^\varepsilon\Box\overline{\Phi^\varepsilon}}{2}\\
          &=\varepsilon^2\sqrt{\rho^\varepsilon}\Box\sqrt{\rho^\varepsilon}-\varepsilon^2\frac{\Box\Phi^\varepsilon\overline{\Phi^\varepsilon}+\Phi^\varepsilon\Box\overline{\Phi^\varepsilon}}{2},
    \end{align*}
    so that if $\Phi^\varepsilon$ is a solution to \eqref{eq:KGPKGP} we recover
     \begin{align*}
         &\frac{(\textbf{J}^\varepsilon)^\alpha(\textbf{J}^\varepsilon)_\alpha}{|\Phi^\varepsilon|^2}=\varepsilon^2\sqrt{\rho^\varepsilon}\Box\sqrt{\rho^\varepsilon}-\rho^\varepsilon-2V'(\rho^\varepsilon)\rho^\varepsilon.
    \end{align*}
\end{proof}
In the next Proposition, we state the global well-posedness result.\\
\begin{propal}[From \cite{Brenner1979,Wahl1981,zbMATH03875887,zbMATH04202876,zbMATH00011219,zbMATH00168350,zbMATH00703984,zbMATH00817689}]
\label{propal:solutglobalKGP}
    Let $(\varphi^\varepsilon,\dot{\varphi}^\varepsilon)$ be in $H^2\times H^1$ with
      \begin{align}
         \int_{\mathbb{R}^3}{\frac{\varepsilon^2|\nabla\varphi^\varepsilon|^2}{2}+\frac{\varepsilon^2|\dot{\varphi}^\varepsilon|^2}{2}+\frac{|\varphi^\varepsilon|^2}{2}+V(|\varphi^\varepsilon|^2)dx}\leq C_\varepsilon,
     \end{align}
    then, for $2\leq\gamma\leq3$ there exists a global solution to \eqref{eq:KGPKGP} with initial data $(\Phi^\varepsilon,\partial_t\Phi^\varepsilon)|_{t=0}=(\varphi^\varepsilon,\dot{\varphi}^\varepsilon)$ and with $\Phi^\varepsilon\in\bigcap^{2}_{j=0}C^{2-j}(\mathbb{R}_+,H^{j})$.\\
\end{propal}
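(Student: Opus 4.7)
The plan is the standard one for defocusing semilinear wave equations at subcritical regularity in three dimensions: prove local well-posedness in $H^2 \times H^1$ by a fixed-point argument, obtain an a priori $H^1 \times L^2$ bound from conservation of the energy $\mathcal{E}^{KG}$, and then propagate $H^2 \times H^1$ regularity through a higher-order energy estimate that exploits the $H^1$-subcriticality of the cubic nonlinearity. I treat $\varepsilon>0$ as a fixed parameter throughout; all constants are allowed to depend on it, since the statement claims only global existence for each $\varepsilon$.

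First, I would rewrite \eqref{eq:KGPKGP} as a first-order system for $(\Phi^\varepsilon,\partial_t\Phi^\varepsilon)$ and set up a Picard iteration in the space $C([0,T];H^2)\cap C^1([0,T];H^1)\cap C^2([0,T];L^2)$. For $\gamma=2$ the nonlinearity reduces to $\Phi^\varepsilon+2|\Phi^\varepsilon|^2\Phi^\varepsilon$, and since $H^2\hookrightarrow L^\infty$ in three dimensions and $H^1$ is stable under multiplication by $L^\infty$, the map $\Phi\mapsto|\Phi|^2\Phi$ is locally Lipschitz from $H^2$ to $H^1$. Combined with the standard energy inequality for $\varepsilon^2\Box$, this produces a unique local solution on a maximal interval $[0,T^*)$, together with the blow-up alternative that either $T^*=+\infty$ or $\|\Phi^\varepsilon(t)\|_{H^2}+\|\partial_t\Phi^\varepsilon(t)\|_{H^1}\to\infty$ as $t\to T^{*-}$.

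To exclude blow-up, I would use conservation of $\mathcal{E}^{KG}$ (which follows from \eqref{eq:divstressenergyKGP}) to bound $\varepsilon\|\boldsymbol{\nabla}\Phi^\varepsilon(t)\|_{L^2}$, $\|\Phi^\varepsilon(t)\|_{L^2}$ and $\|\Phi^\varepsilon(t)\|_{L^4}^4$ uniformly by $C_\varepsilon$, which yields in particular a time-independent $H^1$ bound. For the second-order estimate, I apply $\boldsymbol{\nabla}$ to \eqref{eq:KGPKGP} and run an $L^2$ energy estimate on the differentiated equation; the only delicate contribution is $\boldsymbol{\nabla}(|\Phi^\varepsilon|^2\Phi^\varepsilon)$, whose $L^2$ norm is bounded by $\|\Phi^\varepsilon\|_{L^\infty}^2\|\boldsymbol{\nabla}\Phi^\varepsilon\|_{L^2}$. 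Applying Gagliardo--Nirenberg $\|\Phi^\varepsilon\|_{L^\infty}\lesssim\|\Phi^\varepsilon\|_{H^1}^{1/2}\|\Phi^\varepsilon\|_{H^2}^{1/2}$ together with the $H^1$ bound from the previous step gives a linear-in-$H^2$ differential inequality of the form $\frac{d}{dt}\bigl(\|\Phi^\varepsilon\|_{H^2}^2+\|\partial_t\Phi^\varepsilon\|_{H^1}^2\bigr)\leq C_\varepsilon\bigl(\|\Phi^\varepsilon\|_{H^2}^2+\|\partial_t\Phi^\varepsilon\|_{H^1}^2+1\bigr)$, so Gronwall prevents finite-time blow-up and forces $T^*=+\infty$.

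The main obstacle is closing the $H^2$ estimate without generating a supercritical nonlinear feedback; this is precisely what is allowed by the choice $\gamma=2$, which is strictly $H^1$-subcritical in dimension three (the energy-critical exponent for the nonlinearity $|\Phi|^{2(\gamma-1)}\Phi$ being $\gamma=3$). Because we remain $H^1$-subcritical, no Strichartz machinery is needed and the direct energy method above is enough; the same scheme would fail as stated for $\gamma>3$, where more refined dispersive estimates would be required.
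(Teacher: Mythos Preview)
Your proposal is correct and follows essentially the same scheme as the paper: local well-posedness plus blow-up alternative, a uniform $H^1\times L^2$ bound from the conserved energy $\mathcal{E}^{KG}$, and then a differentiated energy estimate closed by Gronwall. The only cosmetic differences are that the paper first rescales $\varepsilon$ out (working with $\Phi(t,x)=\Phi^\varepsilon(\varepsilon t,\varepsilon x)$ solving $\Box\Phi=\Phi+2|\Phi|^2\Phi$) and, for the nonlinear term in the $H^2$ estimate, bounds $\|\,|\Phi|^2\nabla\Phi\,\|_{L^2}\leq\|\Phi\|_{L^6}^2\|\nabla\Phi\|_{L^6}$ via the Sobolev embedding $H^1\hookrightarrow L^6$ rather than your $L^\infty$ bound through Gagliardo--Nirenberg; both routes yield a linear Gronwall inequality and are interchangeable here.
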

\begin{proof}
     See \cite{Brenner1979,Wahl1981,zbMATH03875887} for the subcritical case $2\leq\gamma<3$ and \cite{zbMATH04202876,zbMATH00011219,zbMATH00168350,zbMATH00703984,zbMATH00817689} for the critical one $\gamma=3$.
\end{proof}
\section{Relativistic Euler with potential}
\label{section:REP}
We introduce the relativistic Euler system with potential in its general form
\begin{equation}
\label{eq:Eulerbasepotent}
\begin{cases}
\textbf{U}^\alpha\textbf{D}_\alpha \textbf{U}_\beta+\dfrac{\textbf{D}_\beta B}{\rho}=0,\\
\textbf{U}^\alpha\textbf{D}_\alpha \rho+\textbf{D}_\alpha \textbf{U}^\alpha\rho=0,\\
B=B(\rho),
\end{cases}
\end{equation}
where $\rho$ is the energy density, $B$ the potential and $\textbf{U}$ the four-velocity vector field. We denote by $U$ the vector field from $\mathbb{R}^{1+3}$ to $\mathbb{R}^{3}$ with $U_i=\textbf{U}_i$, the space components, and by $U_0=\textbf{U}_0$ the time component. \\
\begin{defi}
\label{defi:energytensorREP}
The stress-energy tensor associated to \eqref{eq:EulerbaseREP} is 
\begin{equation}
\label{eq:stressenergyREP}
    T^{EP}[\textbf{U},\rho]_{\alpha\beta}=\rho \textbf{U}_\alpha \textbf{U}_\beta+g_{\alpha\beta}B,
\end{equation}
its divergence is equal to 0 
\begin{equation}
\label{eq:divstressenergyREP}
    \textbf{D}_\alpha (T^{EP})^{\alpha}~_\beta=0.
\end{equation}
We also have
\begin{equation}
\label{eq:EenergystressenergyREP}
   \int{T^{EP}[\textbf{U},\rho]_{00}dx}=\mathcal{E}^{EP}[\textbf{U},\rho],
\end{equation}
where $\mathcal{E}^{EP}$ is the associated conserved energy
\begin{equation}
\label{eq:EenergyREP}	
	\mathcal{E}^{EP}[\textbf{U},\rho]=\int_{\mathbb{R}^3}{\textbf{U}_0\textbf{U}_0\rho-Bdx}.
\end{equation}
\end{defi}
\begin{defi}
\label{defi:momentumREP}
    The momentum associated to \eqref{eq:EulerbaseREP} is
\begin{equation}
\textbf{J}_\alpha=\rho \textbf{U}_\alpha,
\end{equation}
with
\begin{equation}
\label{eq:momentumREP}
\textbf{D}_\alpha \textbf{J}^\alpha=0.
\end{equation}
We also set $J$ from $\mathbb{R}^{1+3}$ to $\mathbb{R}^{3}$ as  $J_i=\textbf{J}_i$ and $J_0=\textbf{J}_0$. 
   We have the conservation law 
\begin{equation}
\label{eq:conservchargeREP}
	\frac{d}{dt}\int_{\mathbb{R}^3}{J_0dx}=\frac{d}{dt}\int_{\mathbb{R}^3}{U_0\rho dx}=0.
\end{equation} 
\end{defi}

\begin{rem}
\label{rem:rhoremarkREP}
The solution $\rho$ to the conservative transport 
\begin{equation}
\label{eq:conservtpREP}
    \textbf{U}^\alpha\textbf{D}_\alpha \rho+\textbf{D}_\alpha \textbf{U}^\alpha\rho=0\\
\end{equation}
is given implicitly by 
\begin{equation}
\label{eq:implicitrhoREP}
    \rho(\chi(\tau,y))=\rho(0,y)e^{-\int^\tau_0{\textbf{D}_\alpha \textbf{U}^\alpha(\chi(\theta,y))d\theta}},
\end{equation}
where $\chi$ is the flow of $\textbf{U}$, solution to 
\begin{equation}
    \begin{cases}
        \label{eq:flowlineREP}\dot{\chi}^\alpha(\tau,y)=\textbf{U}^\alpha(\chi(\tau,y)),\\
        \chi(0,y)=(0,y).
    \end{cases}
\end{equation}
We observe that if $\rho(0)\geq0$ then $\rho(t)\geq0$. Moreover, if $\textbf{U}$ is future-directed ($\textbf{U}^0>0$) then $J^0$ is positive definite. 
\end{rem}
Now, we restrain ourselves to the specific case $B=(\gamma-1)V(\rho)$ with the normalization $\textbf{U}^\alpha \textbf{U}_\alpha+1+2V'(\rho)=0$. In this case, we recover the REP system 
\begin{equation}
\label{eq:EulerbaseREP}
\begin{cases}
\textbf{U}^\alpha\textbf{D}_\alpha \textbf{U}_\beta+\textbf{D}_\beta V'(\rho)=0,\\
\textbf{U}^\alpha\textbf{D}_\alpha \rho+\textbf{D}_\alpha \textbf{U}^\alpha\rho=0,\\
\textbf{U}^\alpha \textbf{U}_\alpha+2V'(\rho)=-1.
\end{cases}
\end{equation}
We compare \eqref{eq:EulerbaseREP} to the usual relativistic barotropic Euler in section \ref{section:relatEul}.\\

\begin{propal}
\label{propal:propagwellprepREP}
    Let $(\textbf{U},\rho)$ be a continuous solution to \eqref{eq:Eulerbasepotent} with $B=(\gamma-1)V(\rho)$ defined on $[0,T^\star]$ with $(\mathscr{U}^0,\mathscr{U},\varrho)$ well-prepared initial data from definition \ref{defi:wellprepREP}.
    Then, for all $t\in[0,T^\star]$
     \begin{align}
     \label{eq:propagwellprepREP}
        &\rho(t)\geq0,\;\; \textbf{U}^0(t)>0, &\textbf{U}^\alpha(t) \textbf{U}_\alpha(t)+1+2V'(\rho(t))=0, 
    \end{align}
    so that $\textbf{U}$ is future-directed and in particular $\textbf{U}^0\geq 1$. Moreover, $(\textbf{U},\rho)$ is a solution to \eqref{eq:EulerbaseREP}.
\end{propal}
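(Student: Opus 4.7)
The key identity to establish is that the scalar
\[
F := \textbf{U}^\alpha\textbf{U}_\alpha + 1 + 2V'(\rho)
\]
is transported by $\textbf{U}$. With $V'(\rho)=\rho^{\gamma-1}$ and $B=(\gamma-1)V(\rho)=\tfrac{\gamma-1}{\gamma}\rho^{\gamma}$, a direct computation yields
\[
\frac{\boldsymbol{\nabla}_\alpha B}{\rho} = (\gamma-1)\rho^{\gamma-2}\boldsymbol{\nabla}_\alpha\rho = \boldsymbol{\nabla}_\alpha V'(\rho),
\]
so the first equation of \eqref{eq:EulerbaseREP} reads $\textbf{U}^\beta\boldsymbol{\nabla}_\beta\textbf{U}_\alpha=-\boldsymbol{\nabla}_\alpha V'(\rho)$. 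Contracting with $2\textbf{U}^\alpha$ gives $\textbf{U}^\beta\boldsymbol{\nabla}_\beta(\textbf{U}^\alpha\textbf{U}_\alpha)=-2\textbf{U}^\alpha\boldsymbol{\nabla}_\alpha V'(\rho)$, and adding $\textbf{U}^\beta\boldsymbol{\nabla}_\beta(1+2V'(\rho))=2\textbf{U}^\alpha\boldsymbol{\nabla}_\alpha V'(\rho)$ produces $\textbf{U}^\beta\boldsymbol{\nabla}_\beta F=0$.

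\textbf{Continuity/bootstrap step.} The normalization then propagates by characteristics, provided the flow lines of $\textbf{U}$ reach every point of $[0,T^*]\times\mathbb{R}^3$. I would run a continuity argument on
\[
A := \bigl\{\,t\in[0,T^*]\;:\;F\equiv 0 \text{ and } \textbf{U}^0\geq 1 \text{ on } [0,t]\times\mathbb{R}^3\,\bigr\}.
\]
At $t=0$, well-preparedness gives $F|_{t=0}=0$ and $(\mathscr{U}^0)^2=1+|\mathscr{U}|^2+2V'(\varrho)\geq 1$; combined with $\mathscr{U}^0>0$ this yields $0\in A$. Closedness of $A$ is immediate from continuity of $\textbf{U}^0$ and $\rho$. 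For openness at $t_0\in A$ with $t_0<T^*$, continuity of $\textbf{U}^0$ supplies $\eta>0$ such that $\textbf{U}^0\geq 1/2$ on $[0,t_0+\eta]\times\mathbb{R}^3$; then the flow line system \eqref{eq:flowlineREP} is well-posed, its time component is strictly increasing, and the map $(\tau,y)\mapsto\chi(\tau,y)$ covers the slab. Integrating $\textbf{U}^\beta\boldsymbol{\nabla}_\beta F=0$ along these characteristics from $F|_{t=0}=0$ gives $F\equiv 0$ on the slab, after which $(\textbf{U}^0)^2=1+|U|^2+2V'(\rho)\geq 1$ together with $\textbf{U}^0\geq 1/2>0$ forces $\textbf{U}^0\geq 1$, so $t_0+\eta\in A$.

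\textbf{Remaining conclusions.} From $A=[0,T^*]$ the full normalization \eqref{eq:propagwellprepREP} holds, and $\textbf{U}^0\geq 1$ encodes the future-directed character. For $\rho\geq 0$, I invoke the implicit formula \eqref{eq:implicitrhoREP} from Remark \ref{rem:rhoremarkREP}: along each flow line $\rho$ equals $\varrho(y)\geq 0$ times a strictly positive exponential; since $\textbf{U}^0\geq 1$ ensures the flow covers $[0,T^*]\times\mathbb{R}^3$, positivity propagates everywhere.

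\textbf{Main obstacle.} The only genuine subtlety is closing the loop in the bootstrap: one needs $\textbf{U}^0>0$ to solve the characteristic ODE and cover a slab, but one also needs $F\equiv 0$ (the propagated normalization) to deduce the quantitative bound $\textbf{U}^0\geq 1$. The strict initial inequalities together with continuity of $\textbf{U}^0$ open exactly the short time interval in which both statements can be propagated simultaneously, closing the induction. The polynomial form $V'(x)=x^{\gamma-1}$ is essential here: it is what turns $\boldsymbol{\nabla} B/\rho$ into the perfect gradient $\boldsymbol{\nabla} V'(\rho)$, which in turn makes $F$ a transported scalar.
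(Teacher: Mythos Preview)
Your proof is correct and follows the same core idea as the paper: both establish that $F=\textbf{U}^\alpha\textbf{U}_\alpha+1+2V'(\rho)$ satisfies the transport equation $\textbf{U}^\beta\boldsymbol{\nabla}_\beta F=0$ (the paper writes this via $B'/\rho=V''$, you via $\boldsymbol{\nabla} B/\rho=\boldsymbol{\nabla} V'$; these are identical), then invoke the implicit formula \eqref{eq:implicitrhoREP} for $\rho\geq 0$, and finally read off $\textbf{U}^0\geq 1$ from the propagated normalization.

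The one genuine addition in your argument is the explicit continuity/bootstrap on the set $A$ to guarantee that the flow lines of $\textbf{U}$ actually foliate the slab $[0,T^*]\times\mathbb{R}^3$. The paper's proof is terse and does not spell this out: it asserts constancy along flow lines and moves on. Your version closes a logical gap that the paper leaves implicit, namely that one needs $\textbf{U}^0>0$ to parametrize the characteristics by $t$ and cover each time slice, while $\textbf{U}^0\geq 1$ is itself a consequence of the normalization one is trying to propagate. Your open--closed argument on $A$ handles this circularity cleanly; the paper presumably regards it as routine once the transport identity is known.
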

\begin{proof}
    We show that the quantity $\textbf{U}^\alpha(t) \textbf{U}_\alpha(t)+1+2V'(\rho(t))$ is constant along the flow lines. We have
    \begin{align*}
         &\textbf{U}^\beta\textbf{D}_\beta(\textbf{U}^\alpha \textbf{U}_\alpha+1+2V'(\rho))=2(\textbf{U}^\alpha \textbf{U}^\beta\textbf{D}_\beta \textbf{U}_\alpha+\textbf{U}^\beta\textbf{D}_\beta V'(\rho))=2( -\textbf{U}^\beta\frac{B'}{\rho}\textbf{D}_\beta \rho +\textbf{U}^\beta V''(\rho)\textbf{D}_\beta\rho )=0.
    \end{align*}
Then, with the formula \eqref{eq:implicitrhoREP} we get that $\rho\geq0$. Finally, we have from \eqref{eq:propagwellprepREP} that 
\begin{align}
\textbf{U}^0=\sqrt{1+2V'(\rho)+\textbf{U}^i\textbf{U}_i}\geq 1.
\end{align}
\end{proof}
\begin{propal}
    \label{propal:wellposedEulerEP}
   The REP system \eqref{eq:EulerbaseREP} is locally well posed for $(\mathscr{U}^0,\mathscr{U},\varrho)$ well-prepared initial data from definition \ref{defi:wellprepREP} and with $(\mathscr{U}^0,\mathscr{U},\varrho)\in H^n\times H^n\times H^n$ with $n\geq 4$.
\end{propal}
\begin{proof}
    See Appendix \ref{section:appendix2}.
\end{proof}

\section{Definition of the new modulated energy}
\label{section:defmod}
We give the basic definitions for the new modulated energy made with the stress-energy tensor (modulated stress-energy tensor).\\
\begin{propal}
\label{propal:tensordiffdefmod}Let $(\Phi^\varepsilon)_{0<\varepsilon<1}$ be solutions to \eqref{eq:KGPKGP} and $(\textbf{U},\rho)$ be a solution to \eqref{eq:EulerbaseREP} with well-prepared initial data from definition \ref{defi:wellprepREP}, let $T_{\alpha\beta}^{KG}$ and $T_{\alpha\beta}^{EP}$ be their respective stress-energy tensor field \ref{defi:energytensorKGP} and \ref{defi:energytensorREP}, then we have
    \begin{equation}
        T_{\alpha\beta}^{KG}-T_{\alpha\beta}^{EP}=h^\varepsilon_{\alpha\beta}+I^\varepsilon_{\alpha\beta},
    \end{equation}
with 
\begin{align*}
\label{hdefdefmod}
&h_{\alpha\beta}^\varepsilon=\frac{1}{2}((\varepsilon\textbf{D}_\alpha-i\textbf{U}_\alpha)\Phi^\varepsilon\overline{(\varepsilon\textbf{D}_\beta-i\textbf{U}_\beta)\Phi^\varepsilon}+\overline{(\varepsilon\textbf{D}_\alpha-i\textbf{U}_\alpha)\Phi^\varepsilon}(\varepsilon\textbf{D}_\beta-i\textbf{U}_\beta)\Phi^\varepsilon)\\
&-\frac{1}{2}g_{\alpha\beta}((\varepsilon\textbf{D}_\mu-i\textbf{U}_\mu)\Phi^\varepsilon\overline{(\varepsilon\textbf{D}^\mu-i\textbf{U}^\mu )\Phi^\varepsilon})-g_{\alpha\beta}(V(\rho^\varepsilon)-V(\rho)-V^{\prime}(\rho)(\rho^\varepsilon-\rho)),
\end{align*}
and 
\begin{equation}
\label{Idefdefmod}
I^\varepsilon_{\alpha\beta}=-\textbf{U}_\alpha \textbf{U}_\beta (\rho-\rho^\varepsilon)+\textbf{U}_\alpha (\textbf{J}^\varepsilon_\beta-\textbf{U}_\beta\rho^\varepsilon)+\textbf{U}_\beta (\textbf{J}^\varepsilon_\alpha-\textbf{U}_\alpha\rho^\varepsilon)-g_{\alpha\beta}(\textbf{J}^\varepsilon_\mu-\textbf{U}_\mu \rho^\varepsilon)\textbf{U}^\mu.\\
\end{equation}
\end{propal}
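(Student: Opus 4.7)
The proof is a bookkeeping calculation: expand every term in $h^\varepsilon_{\alpha\beta}+I^\varepsilon_{\alpha\beta}$, observe massive cancellations, and identify the result with $T^{KG}_{\alpha\beta}-T^{EP}_{\alpha\beta}$. The only non-algebraic ingredients will be the definition of $\textbf{J}^\varepsilon$ (definition \ref{defi:momentumKGP}), the specific form $V'(x)=x^{\gamma-1}$ (hence $\gamma V(\rho)=V'(\rho)\rho$), and the propagated eikonal normalization $\textbf{U}^\alpha\textbf{U}_\alpha+1+2V'(\rho)=0$ from proposition \ref{propal:propagwellprepREP}.

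First I would expand the symmetric gradient part of $h^\varepsilon_{\alpha\beta}$. Writing out $(\varepsilon\boldsymbol{\nabla}_\alpha-i\textbf{U}_\alpha)\Phi^\varepsilon\,\overline{(\varepsilon\boldsymbol{\nabla}_\beta-i\textbf{U}_\beta)\Phi^\varepsilon}$ and adding its conjugate, one obtains
\[
\tfrac{\varepsilon^2}{2}\bigl(\boldsymbol{\nabla}_\alpha\Phi^\varepsilon\boldsymbol{\nabla}_\beta\overline{\Phi^\varepsilon}+\boldsymbol{\nabla}_\alpha\overline{\Phi^\varepsilon}\boldsymbol{\nabla}_\beta\Phi^\varepsilon\bigr)-\textbf{U}_\alpha\textbf{J}^\varepsilon_\beta-\textbf{U}_\beta\textbf{J}^\varepsilon_\alpha+\textbf{U}_\alpha\textbf{U}_\beta\rho^\varepsilon,
\]
after recognizing the combinations $\overline{\Phi^\varepsilon}\boldsymbol{\nabla}_\alpha\Phi^\varepsilon-\Phi^\varepsilon\boldsymbol{\nabla}_\alpha\overline{\Phi^\varepsilon}=-\tfrac{2}{\varepsilon i}\textbf{J}^\varepsilon_\alpha$ from definition \ref{defi:momentumKGP}. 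An analogous expansion of the trace piece gives
\[
-\tfrac{1}{2}g_{\alpha\beta}\bigl(\varepsilon^2\boldsymbol{\nabla}_\gamma\Phi^\varepsilon\boldsymbol{\nabla}^\gamma\overline{\Phi^\varepsilon}-2\textbf{U}^\gamma\textbf{J}^\varepsilon_\gamma+\textbf{U}_\gamma\textbf{U}^\gamma\rho^\varepsilon\bigr).
\]

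Next I would add $I^\varepsilon_{\alpha\beta}$ as written. The symmetrized $\textbf{U}\textbf{J}^\varepsilon$ terms and the $g_{\alpha\beta}\textbf{U}^\gamma\textbf{J}^\varepsilon_\gamma$ terms coming from $h^\varepsilon$ and from $I^\varepsilon$ cancel exactly, and the $\textbf{U}_\alpha\textbf{U}_\beta\rho^\varepsilon$ contributions combine to leave only $-\textbf{U}_\alpha\textbf{U}_\beta\rho$. What remains to check reduces to the scalar identity
\[
\tfrac{1}{2}(\textbf{U}_\gamma\textbf{U}^\gamma+1)\rho^\varepsilon+\gamma V(\rho)+V'(\rho)(\rho^\varepsilon-\rho)=0,
\]
since $-g_{\alpha\beta}(\rho^\varepsilon/2+V(\rho^\varepsilon))$ and $-g_{\alpha\beta}(\gamma-1)V(\rho)$ must arise on the $T^{KG}-T^{EP}$ side, while the $V(\rho^\varepsilon)$ terms cancel with those inside $h^\varepsilon$.

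Finally, using $V(x)=x^\gamma/\gamma$, I get $\gamma V(\rho)=V'(\rho)\rho$, so the identity collapses to
\[
\tfrac{1}{2}\bigl(\textbf{U}_\gamma\textbf{U}^\gamma+1+2V'(\rho)\bigr)\rho^\varepsilon=0,
\]
which holds by the eikonal normalization of proposition \ref{propal:propagwellprepREP}. I expect the main obstacle to be purely notational: tracking signs and the six separate terms in $I^\varepsilon_{\alpha\beta}$ without losing any pairing, and being careful that all remaining $V$-dependent pieces are organized so as to isolate the normalization factor at the very end. Once the normalization is invoked the identification with $T^{KG}_{\alpha\beta}-T^{EP}_{\alpha\beta}$ is immediate.
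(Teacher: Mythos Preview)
Your proposal is correct and follows essentially the same route as the paper: a direct expansion using the definition of $\textbf{J}^\varepsilon$, the relation $\gamma V(\rho)=V'(\rho)\rho$, and the propagated normalization $\textbf{U}^\alpha\textbf{U}_\alpha+1+2V'(\rho)=0$. The only cosmetic difference is direction---the paper starts from $T^{KG}_{\alpha\beta}-T^{EP}_{\alpha\beta}$ and inserts $0=(\textbf{U}^\gamma\textbf{U}_\gamma+1+2V'(\rho))|\Phi^\varepsilon|^2$ to produce $h^\varepsilon+I^\varepsilon$, whereas you expand $h^\varepsilon+I^\varepsilon$ and reduce to the same scalar identity.
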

\begin{proof}
    By direct calculation and using the normalization \eqref{eq:propagwellprepREP}, we have 
\begin{align*}
	&\frac{\varepsilon^2}{2}(\textbf{D}_\alpha\Phi^\varepsilon\textbf{D}_\beta\overline{\Phi^\varepsilon}+\textbf{D}_\alpha\overline{\Phi^\varepsilon}\textbf{D}_\beta\Phi^\varepsilon)-\frac{1}{2}g_{\alpha\beta}(\varepsilon^2\textbf{D}_\mu\overline{\Phi^\varepsilon}\textbf{D}^\mu\Phi^\varepsilon+|\Phi^\varepsilon|^2+2V(|\Phi^\varepsilon|^2))-\rho \textbf{U}_\alpha \textbf{U}_\beta-g_{\alpha\beta}V(\rho)(\gamma-1)\\
&=\frac{\varepsilon^2}{2}(\textbf{D}_\alpha\Phi^\varepsilon\textbf{D}_\beta\overline{\Phi^\varepsilon}+\textbf{D}_\alpha\overline{\Phi^\varepsilon}\textbf{D}_\beta\Phi^\varepsilon)-2\textbf{U}_\alpha \textbf{U}_\beta |\Phi^\varepsilon|^2+\textbf{U}_\alpha \textbf{U}_\beta |\Phi^\varepsilon|^2-\textbf{U}_\alpha \textbf{U}_\beta (\rho-|\Phi^\varepsilon|^2)\\
&-\frac{1}{2}g_{\alpha\beta}(\varepsilon^2\textbf{D}_\mu\overline{\Phi^\varepsilon}\textbf{D}^\mu\Phi^\varepsilon+|\Phi^\varepsilon|^2+2V(|\Phi^\varepsilon|^2)-(\textbf{U}^\mu \textbf{U}_\mu+1+2V'(\rho))|\Phi^\varepsilon|^2+2V(\rho)(\gamma-1))\\
&=\frac{1}{2}((\varepsilon\textbf{D}_\alpha-i\textbf{U}_\alpha)\Phi^\varepsilon\overline{(\varepsilon\textbf{D}_\beta-i\textbf{U}_\beta)\Phi^\varepsilon}+\overline{(\varepsilon\textbf{D}_\alpha-i\textbf{U}_\alpha)\Phi^\varepsilon}(\varepsilon\textbf{D}_\beta-i\textbf{U}_\beta)\Phi^\varepsilon)+\textbf{J}^\varepsilon_\alpha \textbf{U}_\beta+\textbf{J}^\varepsilon_\beta \textbf{U}_\alpha\\
&-2\textbf{U}_\alpha \textbf{U}_\beta |\Phi^\varepsilon|^2-\textbf{U}_\alpha \textbf{U}_\beta (\rho-|\Phi^\varepsilon|^2)-\frac{1}{2}g_{\alpha\beta}((\varepsilon\textbf{D}_\mu-i\textbf{U}_\mu)\Phi^\varepsilon\overline{(\varepsilon\textbf{D}^\mu-i\textbf{U}^\mu )\Phi^\varepsilon}+2\textbf{J}^\varepsilon_\mu \textbf{U}^\mu -2\textbf{U}_\mu \textbf{U}^\mu |\Phi^\varepsilon|^2)\\
&-g_{\alpha\beta}(V(|\Phi^\varepsilon|^2)-V(\rho)-V^{\prime}(\rho)(|\Phi^\varepsilon|^2-\rho)).\\
\end{align*}
\end{proof}

\begin{defi}
\label{defi:etadefmod}
    Let $\textbf{X}$ be a time-like future-directed vector field and $h^\varepsilon_{\alpha\beta}$ be from \ref{propal:tensordiffdefmod}, we define $\eta(\textbf{X})^\varepsilon=h^\varepsilon_{\alpha0}\textbf{X}^\alpha$. It corresponds to looking at the modulated energy flux in the reference frame of $\textbf{X}$.\\
    \end{defi}
\begin{defi}
\label{defi:hdefintegralhdef}
    Let $(\Phi^\varepsilon)_{0<\varepsilon<1}$ be solutions to \eqref{eq:KGPKGP} and $(\textbf{U},\rho)$ be a solution to \eqref{eq:EulerbaseREP} with well-prepared initial data from definition \ref{defi:wellprepREP}, we define $H^\varepsilon$ as $\int_{\mathbb{R}^3}{\eta^\varepsilon(\textbf{U})dx}$, that is 
    \begin{align*}
    \label{eq:Hepsfulldefmod}
    H^\varepsilon(\Phi^\varepsilon,\partial_t\Phi^\varepsilon,\textbf{U},\rho)=\int_{\mathbb{R}^3}{\textbf{U}^\alpha\frac{1}{2}((\varepsilon\textbf{D}_\alpha-i\textbf{U}_\alpha)\Phi^\varepsilon\overline{(\varepsilon\textbf{D}_0-i\textbf{U}_0)\Phi^\varepsilon}+\overline{(\varepsilon\textbf{D}_\alpha-i\textbf{U}_\alpha)\Phi^\varepsilon}(\varepsilon\textbf{D}_0-i\textbf{U}_0)\Phi^\varepsilon)dx}\\
    -\int_{\mathbb{R}^3}{\frac{1}{2}\textbf{U}_0((\varepsilon\textbf{D}_\mu-i\textbf{U}_\mu)\Phi^\varepsilon\overline{(\varepsilon\textbf{D}^\mu-i\textbf{U}^\mu )\Phi^\varepsilon})+\textbf{U}_0(V(|\Phi^\varepsilon|^2)-V(\rho)-V^{\prime}(\rho)(|\Phi^\varepsilon|^2-\rho))dx}.
    \end{align*}
\end{defi}
\section{Proof of Theorem \ref{unTheorem:TH1mainth}}
\label{section:Proof}
The proof of \ref{unTheorem:TH1mainth} is divided in two parts. Firstly, we gather the results on the existence and the regularity of the solutions to the equations \eqref{eq:KGPKGP} and \eqref{eq:EulerbaseREP}. Secondly, we show the convergence of the momentum and the density (the semi-classical limit). This boils down to prove the \textbf{coercivity property} (point \ref{item:modpoint1Idea}) and the \textbf{propagation property} (point \ref{item:modpoint2Idea}) of the modulated energy.
\subsection{Existence and regularity of the solutions}
\label{subsection:reguexist}
\begin{propal}
\label{propal:reguexistProof}
Let $(\varphi^\varepsilon,\dot{\varphi}^\varepsilon)_{0<\varepsilon<1}$ and $(\mathscr{U}^0,\mathscr{U},\varrho)$ be initial data satisfying the assumptions \ref{item:item1mainth} \ref{item:item2mainth} and \ref{item:item3mainth}  of Theorem \ref{unTheorem:TH1mainth}. Then, point \ref{item:point1mainth} of the Theorem is true, the solutions $(\Phi^\varepsilon)_{0<\varepsilon<1}$ and $(\textbf{U},\rho)$ exist on $([0,t_\varepsilon])_{0<\varepsilon<1}$ and $[0,T]$ respectively and there exists $C_0(c_0)$ such that $\forall\;0<\varepsilon<1,\; 0\leq t\leq \min(t_\varepsilon,T)$
     \begin{equation}
     \label{eq:Coenergmainth}
         \int_{\mathbb{R}^3}{\frac{\varepsilon^2|\textbf{D}\Phi^\varepsilon(t)|^2}{2}+\frac{|\Phi^\varepsilon(t)|^2}{2}+V(|\Phi^\varepsilon(t)|^2)dx}+\sum_{j=0}^4||\textbf{U}||_{C^{4-j}([0,T],H^j)}^2+\sum_{j=0}^4||\rho||_{C^{4-j}([0,T],H^j)}\leq C_0.
     \end{equation}
Moreover, we can take $t_\varepsilon=T$ in the case $2\leq\gamma\leq3$.\\
\end{propal}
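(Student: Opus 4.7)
The plan is to assemble Proposition \ref{propal:reguexistProof} from the two well-posedness statements established earlier, namely Proposition \ref{propal:solutglobalKGP} for the Klein--Gordon side and Proposition \ref{propal:wellposedEulerEP} for the REP side, and then to upgrade the time regularity of $(\textbf{U},\rho)$ by trading space derivatives for time derivatives through the evolution equation.

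First I would handle the Klein--Gordon family $(\Phi^\varepsilon)_{0<\varepsilon<1}$. Assumption \ref{item:item1mainth} provides $(\varphi^\varepsilon,\dot{\varphi}^\varepsilon)\in H^2\times H^1$, and assumption \ref{item:item2mainth} provides the uniform energy bound. For $\gamma=2$, Proposition \ref{propal:solutglobalKGP} directly yields a global solution in $\bigcap_{j=0}^2 C^{2-j}(\mathbb{R}_+,H^j)$, so in particular one can take $t_\varepsilon=T$ for any finite $T$. For general $\gamma$ the same energy-method scheme gives at least a local-in-$\varepsilon$-time solution $t_\varepsilon>0$ with the same regularity. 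In both cases, conservation of the semi-classical energy $\mathcal{E}^{KG}[\Phi^\varepsilon]$ established in Definition \ref{defi:energytensorKGP} immediately propagates the bound, so
\begin{equation*}
\int_{\mathbb{R}^3}\frac{\varepsilon^2|\boldsymbol{\nabla}\Phi^\varepsilon(t)|^2}{2}+\frac{|\Phi^\varepsilon(t)|^2}{2}+V(|\Phi^\varepsilon(t)|^2)\,dx=\mathcal{E}^{KG}[\varphi^\varepsilon,\dot{\varphi}^\varepsilon]\leq c_0,
\end{equation*}
for every $t\in[0,t_\varepsilon]$. This takes care of the first term of \eqref{eq:Coenergmainth}.

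Next I would treat the REP solution $(\textbf{U},\rho)$. Assumption \ref{item:item3mainth} says $(\mathscr{U},\varrho)$ is well prepared, so $(\mathscr{U},\sqrt{\varrho}^{\,\gamma-1})\in H^4\times H^4$ and the eikonal normalization holds. Proposition \ref{propal:wellposedEulerEP} therefore applies to the symmetrized reduced system in the unknowns $(U,f)$ with $f=\sqrt{\rho}^{\,\gamma-1}$, yielding a local solution $(U,f)\in C([0,T],H^4)\cap C^1([0,T],H^3)$ on a time interval $T=T(\|U(0)\|_{H^4},\|f(0)\|_{H^4})>0$ whose length depends only on the initial data; by assumption \ref{item:item2mainth} this length depends only on $c_0$. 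Reconstructing $U^0$ via \eqref{eq:propagwellprepREP} and $\rho$ via $f=\sqrt{\rho}^{\,\gamma-1}$, and invoking Proposition \ref{propal:propagwellprepREP} to keep $\rho\geq 0$ and $\textbf{U}^0\geq 1$, one recovers the full $(\textbf{U},\rho)$ on $[0,T]$ at the level of $H^4$.

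The only nontrivial step is to promote this $C([0,T],H^4)$ regularity to the full chain $\bigcap_{j=0}^4 C^{4-j}([0,T],H^j)$. Here I would proceed by induction on the number of time derivatives, using the quasilinear system \eqref{eq:goodEuler3EP} to express $\partial_t(U,f)$ in terms of spatial derivatives of smooth functions of $(U,f)$; each additional $\partial_t$ consumes one order of spatial regularity but is controlled in the remaining $H^{j-1}$ by the product rule and the standard Moser-type tame estimates, yielding $\partial_t^{4-j}(U,f)\in C([0,T],H^j)$ for $j=0,\dots,4$. Combined with the Klein--Gordon energy bound above and the a priori estimate on $\|(U,f)\|_{C([0,T],H^4)}$ already built into the proof of Proposition \ref{propal:wellposedEulerEP}, this gives a constant $C_0$ depending only on $c_0$ and establishes \eqref{eq:Coenergmainth}.

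The main obstacle is not in the Klein--Gordon part, which is a direct citation of Proposition \ref{propal:solutglobalKGP}, but in carrying out the time-derivative bootstrap for the REP solution while respecting the mild degeneracy between $\rho$ and $f=\sqrt{\rho}^{\,\gamma-1}$ when $\gamma$ is not an integer. This is precisely why Definition \ref{defi:wellprepREP} asks for $\sqrt{\varrho}^{\,\gamma-1}\in H^4$ in the sensitive ranges $\gamma\in(2,3)\cup(3,4)\cup(4,5)$, and this extra assumption is exactly what is needed for the composition estimates on $V'(\rho)$ and $V''(\rho)$ to close the induction.
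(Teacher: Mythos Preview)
Your proof is correct and follows essentially the same approach as the paper: invoke Proposition~\ref{propal:solutglobalKGP} (together with the classical energy method for general $\gamma$) for the Klein--Gordon side, Proposition~\ref{propal:wellposedEulerEP} for the REP side, and note that the constant $C_0$ follows. The paper's own proof is a three-line citation of these two propositions with the remark that ``the existence of $C_0$ is direct''; you have simply filled in what ``direct'' means, including the energy conservation for $\Phi^\varepsilon$ and the time-derivative bootstrap for $(\textbf{U},\rho)$.
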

\begin{nota}
\label{nota:ColaconstantProof}
We note $C_0$ any constant that only depends on $C_0$.
\end{nota}
\begin{proof}
For the well posedness of KG \eqref{eq:KGPKGP} we use the classical energy method and Proposition \ref{propal:solutglobalKGP} in the case $2\leq\gamma\leq3$ ($t_\varepsilon$ is as large as we want). For the REP system \eqref{eq:EulerbaseREP} we use the Proposition \ref{propal:wellposedEulerEP}. The existence of $C_0$ is direct.  
\end{proof}
\subsection{\textbf{Coercivity property}, point \ref{item:modpoint1Idea}}
\label{subsection:Cyproof}
In this section, we consider that $(\Phi^\varepsilon)_{0<\varepsilon<1}$ and $(\textbf{U},\rho)$ are given by Proposition \ref{propal:reguexistProof} under the assumptions \ref{item:item1mainth}, \ref{item:item2mainth} and \ref{item:item3mainth} of Theorem \ref{unTheorem:TH1mainth}. Moreover, we consider $h^\varepsilon_{\alpha\beta}$ from Proposition \ref{propal:tensordiffdefmod}.\\
\begin{defi}
\label{defi:xiCyproof}
    We write\footnote{We write $\boldsymbol{\xi}$ instead of $\boldsymbol{\xi}^\varepsilon$ to lighten the notation.} $\boldsymbol{\xi}_\alpha=(\varepsilon\textbf{D}_\alpha-i\textbf{U}_\alpha)\Phi^\varepsilon$ and $\Theta^\varepsilon(|\Phi^\varepsilon|^2,\rho)=V(|\Phi^\varepsilon|^2)-V(\rho)-V^{\prime}(\rho)(|\Phi^\varepsilon|^2-\rho)$ so that 
\begin{equation}
\label{eq:defh2Cyproof}
h_{\alpha\beta}^\varepsilon=\frac{1}{2}(\boldsymbol{\xi}_\alpha\overline{\boldsymbol{\xi}_\beta}+\overline{\boldsymbol{\xi}_\alpha}\boldsymbol{\xi}_\beta)-\frac{1}{2}g_{\alpha\beta}(\boldsymbol{\xi}_\mu\overline{\boldsymbol{\xi}^\mu})-g_{\alpha\beta}\Theta^\varepsilon.
\end{equation}
\end{defi}

\begin{propal}
\label{propal:waystoxiCyproof}
We have 
\begin{equation}
\label{eq:h00simpleCyproof}
    h^\varepsilon_{00}=\frac{|\boldsymbol{\xi}|^2}{2}+\Theta^\varepsilon,
\end{equation}
and 
\begin{equation}
\label{eq:hgoodshapeCyproof}
h^\varepsilon_{00}=\varepsilon^2\frac{|\textbf{D}\sqrt{\rho^\varepsilon}|^2}{2}+\frac{|\textbf{J}^\varepsilon-\rho^\varepsilon \textbf{U}|^2}{2\rho^\varepsilon}+\Theta^\varepsilon.
\end{equation}    
\end{propal}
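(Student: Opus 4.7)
The proof is essentially a computation split into two substeps. The first identity is an immediate consequence of taking $\alpha=\beta=0$ in the definition \eqref{eq:defh2Cyproof} of $h_{\alpha\beta}^\varepsilon$. Using $g_{00}=-1$ and splitting the Minkowski contraction $\boldsymbol{\xi}_\gamma\overline{\boldsymbol{\xi}^\gamma}=-|\boldsymbol{\xi}_0|^2+\sum_i|\boldsymbol{\xi}_i|^2$ into its time and space pieces, the cross-term $|\boldsymbol{\xi}_0|^2$ combines with $-\tfrac12 g_{00}\boldsymbol{\xi}_\gamma\overline{\boldsymbol{\xi}^\gamma}$ to reconstruct the Euclidean norm $|\boldsymbol{\xi}|^2/2$, and $-g_{00}\Theta^\varepsilon$ contributes $\Theta^\varepsilon$ directly. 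This gives \eqref{eq:h00simpleCyproof}.

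The plan for the second identity is to rewrite $|\boldsymbol{\xi}|^2$ in terms of quantities involving only $\textbf{J}^\varepsilon$, $\rho^\varepsilon$ and $\textbf{U}$. First I would expand each $|\boldsymbol{\xi}_\alpha|^2=(\varepsilon\boldsymbol{\nabla}_\alpha\Phi^\varepsilon-i\textbf{U}_\alpha\Phi^\varepsilon)\overline{(\varepsilon\boldsymbol{\nabla}_\alpha\Phi^\varepsilon-i\textbf{U}_\alpha\Phi^\varepsilon)}$. The real, squared terms give $\varepsilon^2|\boldsymbol{\nabla}_\alpha\Phi^\varepsilon|^2+\textbf{U}_\alpha^2\rho^\varepsilon$, while the mixed terms $i\varepsilon\textbf{U}_\alpha(\boldsymbol{\nabla}_\alpha\Phi^\varepsilon\overline{\Phi^\varepsilon}-\Phi^\varepsilon\overline{\boldsymbol{\nabla}_\alpha\Phi^\varepsilon})$ are precisely $-2\textbf{U}_\alpha\textbf{J}^\varepsilon_\alpha$ by the definition \eqref{eq:momentumequationKGP} of the momentum. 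Summing over $\alpha$ in the Euclidean sense (notation \ref{nota:gradNOTA1}) yields
\begin{equation*}
|\boldsymbol{\xi}|^2=\varepsilon^2|\boldsymbol{\nabla}\Phi^\varepsilon|^2-2\textbf{J}^\varepsilon\!\cdot\!\textbf{U}+\rho^\varepsilon|\textbf{U}|^2.
\end{equation*}

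The key step is then to substitute the identity \eqref{eq:splitJJ2KGP} of proposition \ref{propal:splitequationKGP}, which gives $\varepsilon^2|\boldsymbol{\nabla}\Phi^\varepsilon|^2=\varepsilon^2|\boldsymbol{\nabla}\sqrt{\rho^\varepsilon}|^2+|\textbf{J}^\varepsilon|^2/\rho^\varepsilon$ (this is the only non-trivial input; it is the Madelung-type decomposition that isolates the quantum pressure from the hydrodynamic kinetic energy). Plugging this in, the three remaining terms $|\textbf{J}^\varepsilon|^2/\rho^\varepsilon-2\textbf{J}^\varepsilon\!\cdot\!\textbf{U}+\rho^\varepsilon|\textbf{U}|^2$ assemble into a perfect square $|\textbf{J}^\varepsilon-\rho^\varepsilon\textbf{U}|^2/\rho^\varepsilon$. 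Dividing by $2$ and adding $\Theta^\varepsilon$ via \eqref{eq:h00simpleCyproof} produces \eqref{eq:hgoodshapeCyproof}.

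No real obstacle is expected: the first identity is a bookkeeping check on the Minkowski signature, and the second is an algebraic rearrangement whose only non-cosmetic ingredient is the already-proved Madelung identity \eqref{eq:splitJJ2KGP}. The mild care required is to keep consistent the Euclidean summation convention implicit in the bold notation versus the Minkowski contractions appearing in \eqref{eq:defh2Cyproof}, and to use the conjugation formula for $\textbf{J}^\varepsilon_\alpha$ with the correct sign so that the cross-term produces $-2\textbf{U}_\alpha\textbf{J}^\varepsilon_\alpha$ rather than $+2\textbf{U}_\alpha\textbf{J}^\varepsilon_\alpha$; getting this sign right is what makes the perfect-square cancellation work.
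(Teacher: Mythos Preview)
Your proposal is correct and follows essentially the same route as the paper: the first identity is obtained by direct evaluation of \eqref{eq:defh2Cyproof} at $\alpha=\beta=0$, and the second by expanding $|\boldsymbol{\xi}|^2$, identifying the cross-term as $-2\textbf{J}^\varepsilon\!\cdot\!\textbf{U}$ via \eqref{eq:momentumequationKGP}, invoking the Madelung identity \eqref{eq:splitJJ2KGP}, and completing the square. Your extra care about the Euclidean-versus-Minkowski sum convention and the sign of the cross-term is well placed and matches exactly what the paper does implicitly.
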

\begin{proof}
We have \eqref{eq:h00simpleCyproof} by direct calculation. The equation \eqref{eq:hgoodshapeCyproof} is given in \cite{zbMATH06101438}, we give a proof of this identity for the sake of completeness. We have
\begin{align*}
    &\frac{|(\varepsilon\textbf{D}-i\textbf{U})\Phi^\varepsilon|^2}{2}=\frac{|\varepsilon\textbf{D}\Phi^\varepsilon|^2}{2}+\frac{|\textbf{U}|^2|\Phi^\varepsilon|^2}{2}-\textbf{J}^\varepsilon\cdot\textbf{U}=\frac{\textbf{J}^\varepsilon\cdot\textbf{J}^\varepsilon}{2\rho^\varepsilon}+\frac{\varepsilon^2\textbf{D}\sqrt{\rho^\varepsilon}\cdot\textbf{D}\sqrt{\rho^\varepsilon}}{2}+\frac{|\textbf{U}|^2\rho^\varepsilon}{2}-\textbf{J}^\varepsilon\cdot\textbf{U}\\    &=\varepsilon^2\frac{|\textbf{D}\sqrt{\rho^\varepsilon}|^2}{2}+\frac{|\textbf{J}^\varepsilon-\rho^\varepsilon \textbf{U}|^2}{2\rho^\varepsilon},
\end{align*}
where we use the equation \eqref{eq:splitJJ2KGP}. \\
\end{proof}
\begin{propal}
\label{propal:c1Cyproof}
     There exists $c_1>0$ such that 
\begin{equation}
\label{eq:c1Cyproof}
		\forall\alpha,\beta\in[\![ 0,3]\!],   \;h^\varepsilon_{00}> c_1|h^\varepsilon_{\alpha\beta}|.
\end{equation}
\end{propal}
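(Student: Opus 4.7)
The plan is to exploit the simplified expression \eqref{eq:h00simpleCyproof}, $h^\varepsilon_{00}=\tfrac12|\boldsymbol{\xi}|^2+\Theta^\varepsilon$, which writes $h^\varepsilon_{00}$ as a pointwise sum of two nonnegative quantities. The nonnegativity of $\Theta^\varepsilon$ is the standard Bregman-divergence inequality: since $V(x)=x^\gamma/\gamma$ is convex on $[0,\infty)$ for $\gamma\geq2$, and since $\rho^\varepsilon=|\Phi^\varepsilon|^2\geq 0$ automatically while $\rho\geq 0$ is propagated by proposition \ref{propal:propagwellprepREP}, one has $\Theta^\varepsilon=V(\rho^\varepsilon)-V(\rho)-V'(\rho)(\rho^\varepsilon-\rho)\geq 0$. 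Consequently both $\tfrac12|\boldsymbol{\xi}|^2$ and $\Theta^\varepsilon$ are individually bounded above by $h^\varepsilon_{00}$, and in particular every single component $|\boldsymbol{\xi}_\alpha|^2$ is bounded by $2h^\varepsilon_{00}$.

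Next, I would expand $h^\varepsilon_{\alpha\beta}$ from \eqref{eq:defh2Cyproof} case by case, using the Minkowski metric $g=\mathrm{diag}(-1,1,1,1)$ and the convention $|\boldsymbol{\xi}|^2=|\boldsymbol{\xi}_0|^2+|\xi|^2$ from \ref{nota:gradNOTA1}. For the mixed $(0,i)$ and off-diagonal spatial $(i,j)$, $i\neq j$, entries, $g_{\alpha\beta}=0$, so $h^\varepsilon_{\alpha\beta}=\mathrm{Re}(\boldsymbol{\xi}_\alpha\overline{\boldsymbol{\xi}_\beta})$; Cauchy--Schwarz/AM--GM then gives
\[
|h^\varepsilon_{\alpha\beta}|\leq \tfrac12\bigl(|\boldsymbol{\xi}_\alpha|^2+|\boldsymbol{\xi}_\beta|^2\bigr)\leq \tfrac12|\boldsymbol{\xi}|^2\leq h^\varepsilon_{00}.
\]
For the diagonal spatial entries $(i,i)$, a short computation using $\boldsymbol{\xi}_\gamma\overline{\boldsymbol{\xi}^\gamma}=-|\boldsymbol{\xi}_0|^2+|\xi|^2$ and $g_{ii}=1$ yields
\[
h^\varepsilon_{ii}=|\boldsymbol{\xi}_i|^2+\tfrac12|\boldsymbol{\xi}_0|^2-\tfrac12|\xi|^2-\Theta^\varepsilon,
\]
whence the triangle inequality together with $|\boldsymbol{\xi}_i|^2\leq |\xi|^2$ gives $|h^\varepsilon_{ii}|\leq \tfrac32|\boldsymbol{\xi}|^2+\Theta^\varepsilon\leq 3\,h^\varepsilon_{00}$.

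Combining the three cases, $|h^\varepsilon_{\alpha\beta}|\leq 3\,h^\varepsilon_{00}$ pointwise for every $(\alpha,\beta)$, so the conclusion holds with any constant $c_1\in(0,1/3)$. There is essentially no analytic obstacle here beyond bookkeeping; the statement reduces to a pointwise matrix-algebra inequality once the nonnegativity $\Theta^\varepsilon\geq 0$ is secured. The only conceptual ingredient worth emphasizing is that the argument genuinely uses both the convex polynomial shape of the nonlinearity and the sign information on $\rho$ coming from the well-prepared-initial-data hypothesis, both of which will also be essential later for the coercivity and propagation properties of $H^\varepsilon$.
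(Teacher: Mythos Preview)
Your proof is correct and is precisely the ``direct calculation'' the paper alludes to without spelling out: you use the nonnegativity of $\Theta^\varepsilon$ (convexity of $V$) together with the identity $h^\varepsilon_{00}=\tfrac12|\boldsymbol{\xi}|^2+\Theta^\varepsilon$ to bound each entry of $h^\varepsilon_{\alpha\beta}$ by elementary Cauchy--Schwarz/triangle inequalities. The only cosmetic point is that the strict inequality in the statement should really be non-strict (both sides vanish when $\boldsymbol{\xi}=0$ and $\rho^\varepsilon=\rho$), but this is an imprecision in the paper's formulation rather than in your argument, and only the non-strict bound is used downstream.
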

\begin{proof}
By direct calculation.\\
\end{proof}
\begin{propal}
\label{prop:propaltheta1Cyproof}
    For $x\geq 0$ $y\geq 0$ the function $\Theta(x,y)$ is positive, with
    \begin{equation}
    \label{eq:inegtheta11Cyproof}
        \Theta(x,y)\gtrsim (x-y)^2(x^{\gamma-2}+y^{\gamma-2}),
    \end{equation}
    and 
    \begin{equation}
    \label{eq:inegtheta12Cyproof}
    \Theta(x,y)\gtrsim |x-y|^\gamma.
    \end{equation}
\end{propal}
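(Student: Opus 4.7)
The plan is to exploit the fact that $V(x) = x^\gamma/\gamma$ is strictly convex on $[0,\infty)$ for $\gamma \geq 2$, so that $\Theta$ is the Bregman divergence associated to $V$. Specifically, I would apply Taylor's formula with integral remainder to $V$ expanded at the point $y$, which gives the representation
\begin{equation*}
\Theta(x,y) \;=\; (\gamma-1)(x-y)^2 \int_0^1 (1-t)\bigl(y+t(x-y)\bigr)^{\gamma-2}\,dt.
\end{equation*}
Since $x,y \geq 0$ and $\gamma \geq 2$, the integrand is non-negative on $[0,1]$, which immediately yields the positivity of $\Theta$.

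For the first inequality \eqref{eq:inegtheta11Cyproof}, the strategy is to bound the integral above from below by $c_\gamma(x^{\gamma-2}+y^{\gamma-2})$. I would argue by two cases. If $x \geq y$, then $s(t) := y+t(x-y)$ is nondecreasing, so on $[0,1]$ it is $\geq y$, while on $[1/2,1]$ it is $\geq (x+y)/2 \geq x/2$; splitting the integral over $[0,1/2]$ and $[1/2,1]$ produces contributions bounded below by a constant times $y^{\gamma-2}$ and by a constant times $x^{\gamma-2}$ respectively. The case $y \geq x$ is symmetric (now $s(t)$ is decreasing, but on $[1/2,1]$ it is $\geq x$ and on $[0,1/2]$ it is $\geq y/2$). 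Combining the two contributions gives the lower bound by $x^{\gamma-2}+y^{\gamma-2}$ up to a $\gamma$-dependent constant.

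The second inequality \eqref{eq:inegtheta12Cyproof} then follows as a direct consequence of the first. Since $x,y \geq 0$, one has $|x-y| = \max(x,y) - \min(x,y) \leq \max(x,y)$, and since $z \mapsto z^{\gamma-2}$ is non-decreasing for $\gamma \geq 2$, we obtain $x^{\gamma-2}+y^{\gamma-2} \geq \max(x,y)^{\gamma-2} \geq |x-y|^{\gamma-2}$. Multiplying by $(x-y)^2$ yields $\Theta(x,y) \gtrsim |x-y|^\gamma$.

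The only step requiring any care is the lower bound on the integral, and the main subtlety there is that $\Theta$ is not symmetric in $(x,y)$, so the two cases $x \geq y$ and $y \geq x$ must be treated separately to obtain both $x^{\gamma-2}$ and $y^{\gamma-2}$ in the bound. The case $\gamma = 2$ is degenerate (the integrand is constant) and is best handled separately, giving $\Theta(x,y) = (x-y)^2/2$, which trivially satisfies both inequalities with constants independent of $x,y$.
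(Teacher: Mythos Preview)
Your proof is correct and takes a genuinely different route from the paper's. The paper exploits the homogeneity of $V$: after checking the boundary case $y=0$, it divides through by $y^\gamma$ to reduce \eqref{eq:inegtheta11Cyproof} to a one-variable inequality in $z=x/y$, namely $\frac{z^\gamma}{\gamma}-\frac{1}{\gamma}-(z-1)\geq c(z-1)^2(z^{\gamma-2}+1)$, and then verifies this by splitting into $z\in[0,1)$ and $z>1$ and computing derivatives. Your approach instead writes $\Theta$ as the Taylor remainder $(\gamma-1)(x-y)^2\int_0^1(1-t)\bigl((1-t)y+tx\bigr)^{\gamma-2}dt$ and bounds the integral below by splitting $[0,1]$ at $1/2$. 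Both arguments handle the second inequality \eqref{eq:inegtheta12Cyproof} in the same way, via $x^{\gamma-2}+y^{\gamma-2}\geq|x-y|^{\gamma-2}$. Your argument is somewhat cleaner and would extend with no change to any $C^2$ convex $V$ satisfying $V''(s)\gtrsim s^{\gamma-2}$, whereas the paper's homogeneity trick is specific to the pure power $V(x)=x^\gamma/\gamma$; on the other hand, the paper's reduction yields an explicit range of admissible constants ($c<\frac{1}{2\gamma}$) directly.
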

\begin{proof}
    These inequalities are used in \cite{zbMATH06101438}, we give here their proofs for the sake of completeness. For the first inequality \eqref{eq:inegtheta11Cyproof}, we search for $c>0$ such that $\Theta(x,y)\geq c(x-y)^2(x^{\gamma-2}+y^{\gamma-2})$.  The inequality is direct for $y=0$, so we divide everything by $y^\gamma$ to get the following inequality for $z=\frac{x}{y}$ 
    \begin{align*}
       \frac{z^\gamma}{\gamma}-\frac{1}{\gamma}-(z-1)\geq c(z-1)^2(z^{\gamma-2}+1).
    \end{align*}
   We split it into two parts. Firstly, we have
    \begin{align}
    \label{eq:inegz1Cyproof}
       \frac{z^\gamma}{\gamma}-\frac{1}{\gamma}-(z-1)\geq \frac{1}{2}(z-1)^2.
    \end{align}
   Indeed, the inequality is true at $z=1$ and by taking a derivative with respect to $z$ we find
    \begin{align*}
      &\forall z\in[0,1),\;\; z^{\gamma-1}-1\leq z-1, \\
      &\forall z>1,\;\;  z^{\gamma-1}-1\geq z-1.
    \end{align*}
   More generally, this implies that for any $0<c<\frac{1}{\gamma}\leq\frac{1}{2}$ we have 
     \begin{align}
     \label{eq:inegz2Cyproof}
       &\frac{z^\gamma}{\gamma}-\frac{1}{\gamma}-(z-1)\geq c(z-1)^2,
    \end{align}
    and also
     \begin{align*}
       &\forall z\in[0,1),\;\;\frac{z^\gamma}{\gamma}-\frac{1}{\gamma}-(z-1)\geq c(z-1)^2z^{\gamma-2}.
       \end{align*}
       It remains to prove that $\exists c>0$ such that
           \begin{align*}
       &\forall z>1,\;\;\frac{z^\gamma}{\gamma}-\frac{1}{\gamma}-(z-1)\geq c(z-1)^2z^{\gamma-2}.
       \end{align*}
    We compute 
    \begin{align*}
    &\partial_z(\frac{z^\gamma}{\gamma}-\frac{1}{\gamma}-(z-1)-c(z-1)^2z^{\gamma-2})\\
        &=z^{\gamma-1}-1-2c(z-1)z^{\gamma-2}-c(\gamma-2)z^{\gamma-3}(z-1)^2\\
        &=z^{\gamma-2}(z(1-c\gamma)+c\gamma)+z^{\gamma-3}(z-1)c(\gamma-2)-1.
    \end{align*}
  We know that $z^{\gamma-3}(z-1)c(\gamma-2)\geq0$ and $z^{\gamma-2}(z(1-c\gamma)+c\gamma)>1$ for $z>1$, $\gamma\geq2$ and $c<\frac{1}{\gamma}$.
  We deduce that the derivative is strictly positive and so for $c<\frac{1}{\gamma}$
  \begin{align}
  \label{eq:inegz3Cyproof}
       &\frac{z^\gamma}{\gamma}-\frac{1}{\gamma}-(z-1)\geq c(z-1)^2z^{\gamma-2}.
  \end{align}
  The two inequalities \eqref{eq:inegz2Cyproof} and \eqref{eq:inegz3Cyproof} give us the inequality  \eqref{eq:inegtheta11Cyproof} for $c<\frac{1}{2\gamma}$.
  For the second inequality, we just need to show that for some $c>0$
   \begin{align*}
     (x-y)^2(x^{\gamma-2}+y^{\gamma-2})\geq c(x-y)^\gamma.
    \end{align*}
  It is trivial for $y=0$ so we do the same reasoning with $z=\frac{x}{y}$. With $c=1$ we directly have 
  \begin{align*}
  (z^{\gamma-2}+1)\geq c(z-1)^{\gamma-2}.
    \end{align*}
\end{proof}
\begin{propal}
\label{propal:inegtheta2Cyproof}
Let $f,g\in L^\gamma$ be two positive non-zero\footnote{There is actually an equality if at least one of the functions is zero.} functions, then we have 
\begin{align}
\label{eq:inegtheta21Cyproof}
    ||f^\gamma-g^\gamma||_{L^1}\leq C_\gamma(||f||_{L^{\gamma}},||g||_{L^{\gamma}})||f-g||^\delta_{L^\gamma}
\end{align}
and  
\begin{align}
\label{eq:inegtheta22Cyproof}
    ||f^\gamma-g^\gamma||_{L^1}\leq C_\gamma(||f||_{L^{\gamma}},||g||_{L^{\gamma}})||\Theta(f,g)||^{\frac{\delta}{\gamma}}_{L^1},
\end{align}
where $\delta>0$ and $C_\gamma:\mathbb{R}^\star_+\times\mathbb{R}^\star_+\to\mathbb{R}^\star_+$ only depend on $\gamma$.
\end{propal}
\begin{proof}
First, we consider $\gamma=n$ an integer. Let $x,y\geq0$, then we have
\begin{itemize}
\item $|x^n-y^n|=|(x^{n/2}-y^{n/2})^2+2y^{n/2}(x^{n/2}-y^{n/2})|$ for $n$ even,
\item $|x^n-y^n|=|(x^{(n-1)/2}-y^{(n-1)/2})^2(x-y)-yx(y^{n-2}-x^{n-2})-2x^{(n-1)/2}y^{(n-1)/2}(y-x)|$ for $n$ odd and strictly greater than $1$,
\end{itemize}
so that by induction we directly get 
\begin{align}
\label{eq:inegxyCyproof}
|x^n-y^n|\leq \sum_{p,q,r\in\mathbb{N},\;r\geq1,\;r+q+p=n}c_{r,p,q}|x-y|^rx^py^q,
\end{align}
where the $c_{r,p,q}$ are constant.\\
Now, for a general $\gamma=n+\epsilon$ with $n=\floor{\gamma}{}$ and $0\leq\epsilon<1$ we have 
\begin{align*}
    |x^\gamma-y^\gamma|=|(x^n-y^n)x^\epsilon+(x^\epsilon-y^\epsilon)y^n|\leq |x^n-y^n|x^\epsilon+|x-y|^\epsilon y^n,
\end{align*}
and so with \eqref{eq:inegxyCyproof} we have 
\begin{align*}
    |x^\gamma-y^\gamma|\leq \sum_{a,b,c\in\mathbb{R}_+,\;a>0,\;a+b+c=\gamma}c_{a,b,c}|x-y|^ax^by^c,
\end{align*}
where the number of $c_{a,b,c}$ which are non-zero is clearly finite. Let $f,g\in L^\gamma$ be two positive functions then
\begin{align*}
    ||f^\gamma-g^\gamma||_{L^1}\leq \sum_{a,b,c\in\mathbb{R}_+,\;a>0,\;a+b+c=\gamma}c_{a,b,c}||(|f-g|^af^bg^c)||_{L^1}\leq C_\gamma(||f||_{L^{\gamma}},||g||_{L^{\gamma}})||f-g||^\delta_{L^\gamma},
\end{align*}
for some $\delta>0$ and some $C_\gamma:\mathbb{R}^\star_+\times\mathbb{R}^\star_+\to\mathbb{R}^\star_+$, using the Hölder inequality. The second inequality \eqref{eq:inegtheta22Cyproof} comes from the previous result and \eqref{eq:inegtheta12Cyproof}.\\
\end{proof}
\begin{propal}
\label{propal:tryoneh00Cyproof}
    Let $H^\varepsilon_0=\int_{\mathbb{R}^3}{h_{00}^\varepsilon dx}$, as in \eqref{eq:moduH0}, then it satisfies the \textbf{coercivity property}, point \ref{item:modpoint1Idea}.
\end{propal}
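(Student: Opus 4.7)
The plan is to start from the alternative expression for $h^\varepsilon_{00}$ provided by equation \eqref{eq:hgoodshapeCyproof} of proposition \ref{propal:waystoxiCyproof}, which displays $h^\varepsilon_{00}$ as a sum of three manifestly non-negative terms: the quantum pressure $\varepsilon^2|\boldsymbol{\nabla}\sqrt{\rho^\varepsilon}|^2/2$, the modulated kinetic density $|\textbf{J}^\varepsilon-\rho^\varepsilon\textbf{U}|^2/(2\rho^\varepsilon)$, and the potential discrepancy $\Theta^\varepsilon$. Under the hypothesis $H^\varepsilon_0=O(\varepsilon^2)$, each of the three spatial integrals must therefore be $O(\varepsilon^2)$ individually, and I will read off each of the four Lebesgue convergence statements from these three bounds combined with the a priori energy control \eqref{eq:Coenergmainth} of proposition \ref{propal:reguexistProof}.

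For the density I would apply the pointwise lower bound \eqref{eq:inegtheta12Cyproof}, $\Theta^\varepsilon\gtrsim |\rho^\varepsilon-\rho|^\gamma$, integrate it, and take the $\gamma$-th root to obtain $||\rho^\varepsilon-\rho||_{L^\gamma}=O(\varepsilon^{2/\gamma})$. For the potential energy I would use $V(\rho^\varepsilon)-V(\rho)=\tfrac{1}{\gamma}((\rho^\varepsilon)^\gamma-\rho^\gamma)$ and apply inequality \eqref{eq:inegtheta22Cyproof} of proposition \ref{propal:inegtheta2Cyproof} with $f=\rho^\varepsilon$, $g=\rho$; the uniform energy bound on $\int V(|\Phi^\varepsilon|^2)\,dx$ controls $||\rho^\varepsilon||_{L^\gamma}$, while $||\rho||_{L^\gamma}$ is controlled by proposition \ref{propal:reguexistProof}, giving $||V(\rho^\varepsilon)-V(\rho)||_{L^1}=O(\varepsilon^{2/\gamma})$.

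For the momentum I would use the telescoping identity $\textbf{J}^\varepsilon-\textbf{U}\rho=(\textbf{J}^\varepsilon-\rho^\varepsilon\textbf{U})+\textbf{U}(\rho^\varepsilon-\rho)$. For the first piece I factor $\textbf{J}^\varepsilon-\rho^\varepsilon\textbf{U}=\sqrt{\rho^\varepsilon}\cdot(\textbf{J}^\varepsilon-\rho^\varepsilon\textbf{U})/\sqrt{\rho^\varepsilon}$, so that Hölder with exponents $2\gamma$ and $2$ gives $||\textbf{J}^\varepsilon-\rho^\varepsilon\textbf{U}||_{L^{2\gamma/(\gamma+1)}}\lesssim ||\rho^\varepsilon||_{L^\gamma}^{1/2}\cdot O(\varepsilon)$ and Hölder with exponents $2$ and $2$ gives $||\textbf{J}^\varepsilon-\rho^\varepsilon\textbf{U}||_{L^1}\lesssim ||\Phi^\varepsilon||_{L^2}\cdot O(\varepsilon)$, the $L^2$ factor $||(\textbf{J}^\varepsilon-\rho^\varepsilon\textbf{U})/\sqrt{\rho^\varepsilon}||_{L^2}=O(\varepsilon)$ coming straight from the modulated kinetic term in $h^\varepsilon_{00}$ and the prefactors being uniformly bounded by the energy. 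For the second piece I bound $||\textbf{U}(\rho^\varepsilon-\rho)||_{L^\gamma}\leq ||\textbf{U}||_{L^\infty}||\rho^\varepsilon-\rho||_{L^\gamma}=O(\varepsilon^{2/\gamma})$ via the Sobolev embedding $H^4\hookrightarrow L^\infty$ in dimension three, and transfer this smallness to $L^{2\gamma/(\gamma+1)}$ by Hölder interpolation against the uniform $L^1$ bound on $\rho^\varepsilon-\rho$ coming from the $|\Phi^\varepsilon|^2$ term of $\mathcal{E}^{KG}$ and the $\rho$ regularity. Placing the first summand in $L^1$ and the second in $L^\gamma$ in the definition of the sum-space norm, and simply adding both summands in $L^{2\gamma/(\gamma+1)}$, produces all four convergences with a common positive power of $\varepsilon$.

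The main subtlety is the handling of the singular denominator $|\textbf{J}^\varepsilon-\rho^\varepsilon\textbf{U}|^2/\rho^\varepsilon$ on the vacuum set $\{\rho^\varepsilon=0\}$: the factorization through $\sqrt{\rho^\varepsilon}$ is legitimate because the derivation of \eqref{eq:hgoodshapeCyproof} itself shows that $\textbf{J}^\varepsilon=\varepsilon\operatorname{Im}(\overline{\Phi^\varepsilon}\boldsymbol{\nabla}\Phi^\varepsilon)$ vanishes wherever $\Phi^\varepsilon$ does, so $(\textbf{J}^\varepsilon-\rho^\varepsilon\textbf{U})/\sqrt{\rho^\varepsilon}$ is an honest $L^2$ function. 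Apart from this, the proof is a straightforward chaining of the pointwise inequalities of propositions \ref{prop:propaltheta1Cyproof} and \ref{propal:inegtheta2Cyproof}, Hölder's inequality, and the uniform bounds \eqref{eq:Coenergmainth}.
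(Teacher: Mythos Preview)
Your proof is correct and follows essentially the same route as the paper: split $h^\varepsilon_{00}$ into its three non-negative pieces, invoke \eqref{eq:inegtheta12Cyproof} and \eqref{eq:inegtheta22Cyproof} for the density and potential, and telescope $\textbf{J}^\varepsilon-\textbf{U}\rho=(\textbf{J}^\varepsilon-\rho^\varepsilon\textbf{U})+\textbf{U}(\rho^\varepsilon-\rho)$ for the momentum, handling the first summand by the factorization through $\sqrt{\rho^\varepsilon}$ and Cauchy--Schwarz/H\"older.

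The only real difference is in the $L^{2\gamma/(\gamma+1)}$ bound for $\textbf{U}(\rho^\varepsilon-\rho)$. The paper applies H\"older directly, pairing $\rho^\varepsilon-\rho\in L^\gamma$ against $\textbf{U}\in L^{2\gamma/(\gamma-1)}$, which yields $O((H^\varepsilon_0)^{1/\gamma})$. Your interpolation between $L^\gamma$ and $L^1$ gives instead $\|\textbf{U}(\rho^\varepsilon-\rho)\|_{L^{2\gamma/(\gamma+1)}}\lesssim \|\rho^\varepsilon-\rho\|_{L^\gamma}^{1/2}\|\rho^\varepsilon-\rho\|_{L^1}^{1/2}=O((H^\varepsilon_0)^{1/(2\gamma)})$, i.e.\ a rate $\varepsilon^{1/\gamma}$ rather than $\varepsilon^{2/\gamma}$. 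This is perfectly sufficient for the proposition as stated (coercivity with \emph{some} $\delta>0$), but note that the paper later records the explicit rate $O(\varepsilon^{2/\gamma})$ in \eqref{eq:conv1mainth}, which your route would weaken to $O(\varepsilon^{1/\gamma})$ for that one norm. The paper's direct H\"older is the sharper and simpler move here. For the $L^1$ bound on $\textbf{J}^\varepsilon-\rho^\varepsilon\textbf{U}$ the paper works with the expression $\tfrac{i}{2}(\Phi^\varepsilon\overline{\boldsymbol{\xi}}-\overline{\Phi^\varepsilon}\boldsymbol{\xi})$ and pairs $\|\Phi^\varepsilon\|_{L^2}$ against $\|\boldsymbol{\xi}\|_{L^2}$ via \eqref{eq:h00simpleCyproof}; your factorization through $\sqrt{\rho^\varepsilon}$ via \eqref{eq:hgoodshapeCyproof} is equivalent.
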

\begin{proof}
    By algebraic computation, from
  \eqref{eq:inegtheta12Cyproof} we have for $\gamma\geq2$ 
\begin{equation}
\label{eq:convtheta1osefCyproof}
    ||\rho^\varepsilon-\rho||_{L^\gamma}\leq (H^\varepsilon_0)^{1/\gamma},
\end{equation}
and from \eqref{eq:inegtheta22Cyproof}
\begin{equation}
\label{eq:convtheta2osefCyproof}
    ||V(\rho^\varepsilon)-V(\rho)||_{L^1}\leq\frac{1}{\gamma}||(\rho^\varepsilon)^\gamma-(\rho)^\gamma||_{L^1}\leq C_0(H^\varepsilon_0)^{\delta/\gamma},
\end{equation}
for some $\delta>0$ and where we control the $L^\gamma$ norms in $O(1)$ with the energy \eqref{eq:Coenergmainth} for $\rho^\varepsilon$ and with classical Sobolev embedding for $\rho$. Moreover,
\begin{align*}
     ||\textbf{J}^\varepsilon-\textbf{U}\rho||_{L^{\frac{2\gamma}{\gamma+1}}}&\leq  ||\textbf{J}^\varepsilon-\textbf{U}\rho^\varepsilon||_{L^{\frac{2\gamma}{\gamma+1}}}+||\textbf{U}\rho^\varepsilon-\textbf{U}\rho||_{L^{\frac{2\gamma}{\gamma+1}}},\\
     &\leq||\sqrt{\rho^\varepsilon}||_{L^{2\gamma}}||\frac{\textbf{J}^\varepsilon-\textbf{U}\rho^\varepsilon}{\sqrt{\rho^\varepsilon}}||_{L^{2}}+||\rho^\varepsilon-\rho||_{L^{\gamma}}||\textbf{U}||_{L^{\frac{2\gamma}{\gamma-1}}}\leq C_0((H^\varepsilon_0)^{1/2}+(H^\varepsilon_0)^{\frac{1}{\gamma}}),
\end{align*}
and
\begin{align*}
     ||\textbf{J}^\varepsilon-\textbf{U}\rho||_{L^1}&\leq  ||\textbf{J}^\varepsilon-\textbf{U}\rho^\varepsilon||_{L^{1}}+||\textbf{U}\rho^\varepsilon-\textbf{U}\rho||_{L^{1}}\\    &\leq||\Im(\Phi^\varepsilon\overline{(\varepsilon\textbf{D}-i\textbf{U})\Phi^\varepsilon})||_{L^{1}}+||\textbf{U}||_{L^{\frac{\gamma}{\gamma-1}}}||\rho^\varepsilon-\rho||_{L^{\gamma}} \\
     &\leq ||\Phi^\varepsilon||_{L^2}||(\varepsilon\textbf{D}-i\textbf{U})\Phi^\varepsilon||_{L^{2}}+||\textbf{U}||_{L^{\frac{\gamma}{\gamma-1}}}||\rho^\varepsilon-\rho||_{L^{\gamma}}\leq C_0((H^\varepsilon_0)^{1/2}+(H^\varepsilon_0)^{\frac{1}{\gamma}}).
\end{align*}
Finally, for some $\delta>0$, we clearly recover 
\begin{equation}
\label{eq:convtryoneoasefCyproof}
H^\varepsilon_0=O(\varepsilon^2)\implies\lim_{\varepsilon\to 0}(||\textbf{J}^\varepsilon-\textbf{U}\rho||_{L^{\frac{2\gamma}{\gamma+1}}}+||\textbf{J}^\varepsilon-\textbf{U}\rho||_{L^1}+||\rho^\varepsilon-\rho||_{L^\gamma}+||V(\rho^\varepsilon)-V(\rho)||_{L^1})=\lim_{\varepsilon\to 0}O(\varepsilon^{\delta})=0.
 \end{equation}
\end{proof}
\begin{rem}
\label{rem:convdensityneedCyproof}
We can observe that the convergence of the density $\rho^\varepsilon$ totally relies on the presence of the potential $V'$. Moreover, the convergence of the momentum is also directly linked to the convergence of the density and thus to $V'$. In particular, if $V'=0$ we can see that the arguments of the proof of the previous Proposition \ref{propal:tryoneh00Cyproof} do not apply anymore.\\
\end{rem}
\begin{propal}
\label{propal:c2c3Cyproof}
    Let $\textbf{X}$ be a time-like future-directed vector field such that $\textbf{X}$ is uniformly bounded with $\textbf{X}^0\geq\nu$ and $-\textbf{X}^\alpha \textbf{X}_\alpha\geq\nu$ for some $\nu>0$, then there exists $c_2(\textbf{X})$ and $c_3(\textbf{X})$ such that for $\eta^\varepsilon(\textbf{X})=h^\varepsilon_{0\beta}\textbf{X}^\beta$ we have  
\begin{equation}
\label{eq:c2c3Cyproof}
		c_2(\textbf{X})h_{00}\leq\eta(\textbf{X})\leq c_3(\textbf{X})h_{00}.
\end{equation}
\end{propal}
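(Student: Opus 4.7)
The plan is to recognize that $h^\varepsilon_{\alpha\beta}$ is essentially the stress--energy tensor of a complex scalar field (with gradient replaced by $\boldsymbol{\xi}$) plus a nonnegative scalar piece proportional to $g_{\alpha\beta}\Theta^\varepsilon$, so that \eqref{eq:c2c3Cyproof} is a quantitative form of the dominant energy condition in which one keeps track of how the constants depend on $\textbf{X}$.

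First I would expand $\eta^\varepsilon(\textbf{X})=\textbf{X}^0 h^\varepsilon_{00}+\textbf{X}^i h^\varepsilon_{0i}$. Using $g_{0i}=0$, the off-diagonal components reduce to $h^\varepsilon_{0i}=\mathrm{Re}(\boldsymbol{\xi}_0\overline{\boldsymbol{\xi}_i})$, while $h^\varepsilon_{00}=\tfrac{1}{2}|\boldsymbol{\xi}|^2+\Theta^\varepsilon$ by proposition \ref{propal:waystoxiCyproof}. Cauchy--Schwarz and AM--GM then give the pointwise bound
\begin{equation*}
|\textbf{X}^i h^\varepsilon_{0i}|\leq |X|\,|\boldsymbol{\xi}_0|\,|\vec{\boldsymbol{\xi}}|\leq \tfrac{|X|}{2}|\boldsymbol{\xi}|^2,
\end{equation*}
where $|X|=(\sum_i(X^i)^2)^{1/2}$ and $|\vec{\boldsymbol{\xi}}|=(\sum_i|\boldsymbol{\xi}_i|^2)^{1/2}$. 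Combining with $\Theta^\varepsilon\geq 0$ from proposition \ref{prop:propaltheta1Cyproof} (so that $\tfrac{1}{2}|\boldsymbol{\xi}|^2\leq h^\varepsilon_{00}$), I immediately obtain the upper bound with $c_3(\textbf{X})=\textbf{X}^0+|X|$, which is finite by the uniform boundedness assumption on $\textbf{X}$.

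For the lower bound, the same estimate, this time taking the sign the other way, gives
\begin{equation*}
\eta^\varepsilon(\textbf{X})\geq (\textbf{X}^0-|X|)\tfrac{|\boldsymbol{\xi}|^2}{2}+\textbf{X}^0\Theta^\varepsilon.
\end{equation*}
The crucial step is to extract a positive, uniform lower bound on $\textbf{X}^0-|X|$, and this is precisely where the timelike assumption enters: from $-\textbf{X}^\alpha\textbf{X}_\alpha=(\textbf{X}^0)^2-|X|^2>\nu$ and the uniform upper bound on $\textbf{X}$, I would deduce $\textbf{X}^0-|X|>\nu/(\textbf{X}^0+|X|)\geq\nu/(2\|\textbf{X}\|_\infty)>0$. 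Together with $\textbf{X}^0>\nu$, this lets me take $c_2(\textbf{X})=\min(\textbf{X}^0-|X|,\textbf{X}^0)>0$ uniformly in spacetime. There is no significant analytical obstacle; the only mild subtlety is ensuring that both $c_2$ and $c_3$ do not degenerate, which is exactly what the causality and boundedness hypotheses on $\textbf{X}$ are there to guarantee.
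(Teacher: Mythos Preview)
Your proof is correct and follows essentially the same route as the paper: expand $\eta^\varepsilon(\textbf{X})=X^0 h_{00}^\varepsilon+X^i h_{0i}^\varepsilon$, use Cauchy--Schwarz/Young on the cross term to reach $(X^0-|X|)\tfrac{|\boldsymbol{\xi}|^2}{2}+X^0\Theta^\varepsilon$ for the lower bound, and control the upper bound by the boundedness of $\textbf{X}$. The only cosmetic difference is that the paper obtains the upper bound by invoking Proposition~\ref{propal:c1Cyproof} rather than repeating the Cauchy--Schwarz estimate, and it does not spell out the inequality $X^0-|X|\geq \nu/(X^0+|X|)$ that you make explicit.
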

\begin{proof}
  We use the notation \ref{nota:gradNOTA2}, we have 
\begin{align*}
&h_{0\beta}\textbf{X}^\beta=h_{00}X^0+h_{0i}X^i=\frac{|\boldsymbol{\xi}|^2}{2}X^0+\frac{\xi_0\overline{\xi_i}+\overline{\xi_0}\xi_i}{2}X^i+\Theta X^0\\
&\geq \frac{|\boldsymbol{\xi}|^2}{2}X^0+\Theta X^0-|\xi_0||\xi||X|\geq \frac{|\boldsymbol{\xi}|^2}{2}(X^0-|X|)+\Theta X^0\\
&\geq c_2(\nu)h_{00}^\varepsilon,
\end{align*}
where we used the Young inequality. We have the existence of $c_2(\textbf{X})$. \\
The existence of $c_3(\textbf{X})$ is a consequence of Proposition \ref{propal:c1Cyproof} and the fact that $\textbf{X}$ is uniformly bounded.\\
\end{proof}
\begin{propal}
\label{propal:HgoodfinalCyproof}
 Let $H^\varepsilon=\int_{\mathbb{R}^3}{\eta^\varepsilon(\textbf{U})dx}$ (defined in \ref{defi:etadefmod}) then it satisfies the coercivity property point \ref{item:modpoint1Idea}. The new modulated energy $H^\varepsilon$ is equivalent to $H^\varepsilon_0$ in terms of coercivity. We have\footnote{See definition \ref{defi:xiCyproof} and Remark \ref{propal:waystoxiCyproof}.}
 \begin{align}
     \label{eq:equivHH0Cyproof}
     H^\varepsilon\sim H^\varepsilon_0=\int_{\mathbb{R}^3}{h^\varepsilon_{00}dx}=\int_{\mathbb{R}^3}{\frac{|\boldsymbol{\xi}|^2}{2}+\Theta^\varepsilon dx}= \int_{\mathbb{R}^3}{\varepsilon^2\frac{|\textbf{D}\sqrt{\rho^\varepsilon}|^2}{2}+\frac{|\textbf{J}^\varepsilon-\rho^\varepsilon  \textbf{U}|^2}{2\rho^\varepsilon}+\Theta^\varepsilon dx}.
 \end{align}
\end{propal}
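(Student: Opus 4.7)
The plan is to deduce this proposition by combining the three previous results: the pointwise equivalence of $\eta^\varepsilon(\textbf{U})$ with $h^\varepsilon_{00}$ established in Proposition \ref{propal:c2c3Cyproof}, the coercivity of $H^\varepsilon_0$ already proved in Proposition \ref{propal:tryoneh00Cyproof}, and the two identities for $h^\varepsilon_{00}$ given in Proposition \ref{propal:waystoxiCyproof}. The only genuine work is checking that the vector field $\textbf{U}$ meets the hypotheses of Proposition \ref{propal:c2c3Cyproof}.

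First, I would verify the three conditions on $\textbf{U}$. The well-preparedness of the initial data together with Proposition \ref{propal:propagwellprepREP} gives $\textbf{U}^0 \geq 1$ on $[0,T]$, which takes care of the first lower bound. For the timelike normalization, the same propagation result yields
\begin{equation*}
-\textbf{U}^\alpha \textbf{U}_\alpha = 1 + 2 V'(\rho) \geq 1,
\end{equation*}
so we can take $\nu = 1$. Uniform boundedness of $\textbf{U}$ follows from the regularity bound \eqref{eq:Coenergmainth} in Proposition \ref{propal:reguexistProof} combined with the Sobolev embedding $H^4 \hookrightarrow L^\infty$ in three dimensions. Hence all the hypotheses of Proposition \ref{propal:c2c3Cyproof} hold with constants depending only on $C_0$.

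With these constants $c_2(\textbf{U}), c_3(\textbf{U})$ in hand, Proposition \ref{propal:c2c3Cyproof} applied pointwise yields
\begin{equation*}
c_2(\textbf{U}) \, h^\varepsilon_{00} < \eta^\varepsilon(\textbf{U}) < c_3(\textbf{U}) \, h^\varepsilon_{00},
\end{equation*}
and integrating in $x$ gives the equivalence $H^\varepsilon \sim H^\varepsilon_0$ announced in \eqref{eq:equivHH0Cyproof}. The two explicit forms of $H^\varepsilon_0$ in \eqref{eq:equivHH0Cyproof} are then immediate from the identities \eqref{eq:h00simpleCyproof} and \eqref{eq:hgoodshapeCyproof} of Proposition \ref{propal:waystoxiCyproof}: the first one comes from the definition of $h^\varepsilon_{\alpha\beta}$ in \eqref{eq:defh2Cyproof} evaluated at $\alpha=\beta=0$, and the second from the splitting of $|\boldsymbol{\xi}|^2$ using equation \eqref{eq:splitJJ2KGP}.

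Finally, the coercivity property for $H^\varepsilon$ is inherited directly from the equivalence: if $H^\varepsilon(t) = O(\varepsilon^2)$, then $H^\varepsilon_0(t) \leq c_2(\textbf{U})^{-1} H^\varepsilon(t) = O(\varepsilon^2)$, and Proposition \ref{propal:tryoneh00Cyproof} delivers the control of the four target norms with the rate $O(\varepsilon^{2/\gamma})$ announced in point \ref{item:modpoint1Idea}. The main obstacle, such as it is, is simply the bookkeeping to ensure that all constants in $c_2(\textbf{U}), c_3(\textbf{U})$ depend only on $C_0$ and not on $\varepsilon$; since the bounds on $\textbf{U}^0$, $-\textbf{U}^\alpha\textbf{U}_\alpha$ and $\|\textbf{U}\|_{L^\infty}$ coming from Proposition \ref{propal:propagwellprepREP} and \eqref{eq:Coenergmainth} are all $\varepsilon$-independent, this is immediate.
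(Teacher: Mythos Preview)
Your proof is correct and follows the same approach as the paper's own proof: verify that $\textbf{U}$ satisfies the hypotheses of Proposition \ref{propal:c2c3Cyproof} (with $\nu=1$ and $\|\textbf{U}\|_{C^0}\leq C_0$), apply it to obtain $c_2(\textbf{U})H^\varepsilon_0 < H^\varepsilon < c_3(\textbf{U})H^\varepsilon_0$, and then inherit coercivity from Proposition \ref{propal:tryoneh00Cyproof}. Your version is simply more explicit about the verification steps and the $\varepsilon$-independence of the constants.
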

\begin{proof}
    For the vector field $\textbf{U}$, we have $\nu=1$ and $||\textbf{U}||_{C^0}\leq C_0$ so that we can apply Proposition \ref{propal:c2c3Cyproof} to get 
    \begin{equation}
		c_2(\textbf{U})H^\varepsilon_{0} \leq H^\varepsilon\leq c_3(\textbf{U})H^\varepsilon_{0},
\end{equation}
then we use the coercivity property of $H^\varepsilon_0$ from \ref{propal:tryoneh00Cyproof}.\\
\end{proof}
\begin{rem}
\label{rem:shiftedframeCyproof}
The quantity $\eta(\textbf{U})$ corresponds to the modulated energy flux in the reference frame of $\textbf{U}$.
\end{rem}
\subsection{\textbf{Propagation property}, point \ref{item:modpoint2Idea}}
\label{subsection:Pgproof}
In this section, we consider that $(\Phi^\varepsilon)_{0<\varepsilon<1}$ and $(\textbf{U},\rho)$ are given by Proposition \ref{propal:reguexistProof} under the assumptions \ref{item:item1mainth}, \ref{item:item2mainth}, \ref{item:item3mainth} and \ref{item:item4mainth} of Theorem \ref{unTheorem:TH1mainth}.\\
\begin{propal}
\label{propal:finalPgproof}
 Let $H^\varepsilon=\int_{\mathbb{R}^3}{\eta^\varepsilon(\textbf{U})dx}$ (defined in \ref{defi:hdefintegralhdef}) then it satisfies point \ref{item:modpoint2Idea}. The new modulated energy $H^\varepsilon$ propagates its smallness. 
\end{propal}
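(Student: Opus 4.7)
The plan is to establish the Gronwall-type differential inequality
\[
\frac{d}{dt} H^\varepsilon \leq C_0 H^\varepsilon + O(\varepsilon^2)
\]
on $[0,\min(t_\varepsilon, T)]$, from which the propagation of $H^\varepsilon(0) = O(\varepsilon^2)$ immediately follows by Gronwall's lemma. The starting point is the decomposition $T^{KG}_{\alpha\beta} - T^{EP}_{\alpha\beta} = h^\varepsilon_{\alpha\beta} + I^\varepsilon_{\alpha\beta}$ from proposition \ref{propal:tensordiffdefmod}, combined with the conservation laws \eqref{eq:divstressenergyKGP} and \eqref{eq:divstressenergyREP}. Since both $T^{KG}$ and $T^{EP}$ are divergence-free, the pointwise identity $\partial_t T_{0\beta} = \partial_i T_{i\beta}$ together with a spatial integration by parts yields
\[
\frac{d}{dt}\int (T^{KG}-T^{EP})_{0\beta}\textbf{U}^\beta\,dx = -\int (T^{KG}-T^{EP})_{\alpha\beta}\,\boldsymbol{\nabla}^\alpha \textbf{U}^\beta\,dx,
\]
and re-expanding through $h^\varepsilon$ and $I^\varepsilon$ rewrites
\[
\frac{d}{dt} H^\varepsilon = -\int h^\varepsilon_{\alpha\beta}\,\boldsymbol{\nabla}^\alpha \textbf{U}^\beta\,dx - \int I^\varepsilon_{\alpha\beta}\,\boldsymbol{\nabla}^\alpha \textbf{U}^\beta\,dx - \frac{d}{dt}\int I^\varepsilon_{0\beta}\textbf{U}^\beta\,dx.
\]

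The first term is the harmless one: the pointwise comparison $|h^\varepsilon_{\alpha\beta}| \lesssim h^\varepsilon_{00}$ of proposition \ref{propal:c1Cyproof}, the uniform bound $\|\boldsymbol{\nabla} \textbf{U}\|_{L^\infty} \leq C_0$ coming from proposition \ref{propal:reguexistProof} and Sobolev embedding, and the equivalence $H^\varepsilon \sim H^\varepsilon_0$ of proposition \ref{propal:HgoodfinalCyproof} together give $\bigl|\int h^\varepsilon_{\alpha\beta}\,\boldsymbol{\nabla}^\alpha \textbf{U}^\beta\,dx\bigr| \leq C_0 H^\varepsilon$.

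The core of the proof is the treatment of the two remaining $I^\varepsilon$-contributions. I plan to expand $I^\varepsilon_{\alpha\beta}$ via \eqref{Idefdefmod}, split its pieces according to their dependence on the perturbations $\rho^\varepsilon - \rho$ and $\textbf{J}^\varepsilon - \textbf{U}\rho^\varepsilon$, and use the REP equations \eqref{eq:goodEulerEP} for $(\textbf{U},\rho)$ together with the conservation laws \eqref{eq:momentumbisKGP}--\eqref{eq:currentJKGP} for $(\textbf{J}^\varepsilon,\rho^\varepsilon)$ to rewrite every time derivative falling on $I^\varepsilon$ in divergence or transport form. Two structural facts drive the key cancellations. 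First, the eikonal normalization $\textbf{U}^\alpha \textbf{U}_\alpha = -1 - 2V'(\rho)$ propagated by proposition \ref{propal:propagwellprepREP} collapses the $\textbf{U}_\alpha \textbf{U}_\beta$ contractions inside $I^\varepsilon_{0\beta}\textbf{U}^\beta$, turning several would-be $O(1)$ terms into expressions involving $V'(\rho)$ that recombine with the $g_{\alpha\beta}$-trace piece and with the momentum equation $\textbf{U}^\alpha \boldsymbol{\nabla}_\alpha \textbf{U}_\beta = -\boldsymbol{\nabla}_\beta V'(\rho)$. Second, the polynomial shape $V'(\rho) = \rho^{\gamma-1}$ allows a Taylor expansion of $V$ around $\rho$: the linear-in-$(\rho^\varepsilon - \rho)$ contribution is exactly what $I^\varepsilon$ was engineered to absorb, whereas the quadratic remainder is precisely $\Theta^\varepsilon$, controlled by $H^\varepsilon$ via proposition \ref{prop:propaltheta1Cyproof}. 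The surviving pieces are then either quadratic in the perturbations (thus bounded by $C_0 H^\varepsilon$) or of order $\varepsilon^2$ coming from the Madelung-type identity \eqref{eq:splitJJ3KGP} of proposition \ref{propal:splitequationKGP}, where the quantum-pressure term $\varepsilon^2 \sqrt{\rho^\varepsilon}\,\Box \sqrt{\rho^\varepsilon}$ surfaces when converting $\textbf{J}^\varepsilon$-flux quantities into $\Phi^\varepsilon$-derivatives; the uniform energy bound \eqref{eq:Coenergmainth} keeps those contributions $O(\varepsilon^2)$ in $L^1$.

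Gathering the three contributions yields the announced Gronwall inequality, and integrating it with $H^\varepsilon(0) = O(\varepsilon^2)$ delivers $H^\varepsilon(t) = O(\varepsilon^2)$ on $[0,\min(t_\varepsilon,T)]$. The hardest part will be the algebraic bookkeeping in the third step: without either the eikonal normalization or the polynomial form of $V'$ emphasized in section \ref{subsection:ideaofproof}, residual linear-in-perturbation terms would escape both the $H^\varepsilon$ and the $\varepsilon^2$ estimates, breaking the Gronwall closure.
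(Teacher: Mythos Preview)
Your overall architecture matches the paper's: differentiate $H^\varepsilon$ using the divergence-free property of both stress-energy tensors, control the $h^\varepsilon_{\alpha\beta}\boldsymbol{\nabla}^\alpha\textbf{U}^\beta$ term pointwise by $h^\varepsilon_{00}$, and use the eikonal normalization together with the polynomial form of $V$ to reduce the $I^\varepsilon$-contributions to $\Theta^\varepsilon$. That part is fine.

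The gap is your treatment of the quantum-pressure term. You write that $\varepsilon^2\sqrt{\rho^\varepsilon}\,\Box\sqrt{\rho^\varepsilon}$ is kept $O(\varepsilon^2)$ in $L^1$ by the uniform energy bound \eqref{eq:Coenergmainth}. It is not. The energy only controls $\varepsilon\boldsymbol{\nabla}\Phi^\varepsilon$ in $L^2$, hence $\varepsilon\boldsymbol{\nabla}\sqrt{\rho^\varepsilon}$ in $L^2$; it gives no control on $\varepsilon^2\partial_{tt}\sqrt{\rho^\varepsilon}$, so the $L^1$ norm of $\varepsilon^2\sqrt{\rho^\varepsilon}\,\Box\sqrt{\rho^\varepsilon}$ is not $O(\varepsilon^2)$ a priori. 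In the paper this term does \emph{not} go directly into the $O(\varepsilon^2)$ remainder: one integrates by parts in time and space, which produces first-derivative terms bounded by $C_0H^\varepsilon + C_0\varepsilon(H^\varepsilon)^{1/2}$ (using $\|\varepsilon\boldsymbol{\nabla}\sqrt{\rho^\varepsilon}\|_{L^2}^2\lesssim H^\varepsilon$ from \eqref{eq:hgoodshapeCyproof}) \emph{plus} a total time derivative
\[
\frac{d}{dt}G^\varepsilon,\qquad G^\varepsilon=-\int_{\mathbb{R}^3}\boldsymbol{\nabla}_\alpha\textbf{U}^\alpha\,\frac{\varepsilon^2\sqrt{\rho^\varepsilon}\,\partial_t\sqrt{\rho^\varepsilon}}{2}\,dx,
\]
that cannot be absorbed pointwise in $t$. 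Consequently the clean differential inequality $\frac{d}{dt}H^\varepsilon\leq C_0H^\varepsilon+O(\varepsilon^2)$ you announce does not hold; what one actually obtains is
\[
\frac{d}{dt}H^\varepsilon\leq C_0\bigl(H^\varepsilon+\varepsilon(H^\varepsilon)^{1/2}\bigr)+\frac{d}{dt}G^\varepsilon,
\]
and one must integrate first, then use $|G^\varepsilon(t)|\leq C_0(\delta^{-1}\varepsilon^2+\delta H^\varepsilon(t))$ with $\delta$ small to absorb the boundary contributions before applying Gronwall. Without this correction-functional step your closure fails.
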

\begin{proof}
First, we calculate that 
\begin{align}
\label{eq:etaUPgproof}
&\eta^\varepsilon(\textbf{U})=h^\varepsilon_{0\alpha}\textbf{U}^\alpha=(T_{0\alpha}^{KG}-T_{0\alpha}^{EP})\textbf{U}^\alpha -I^\varepsilon_{0\alpha}\textbf{U}^\alpha \\
&=(T_{0\alpha}^{KG}-T_{0\alpha}^{EP})\textbf{U}^\alpha-\textbf{U}_0 \textbf{U}_\alpha (\rho-\rho^\varepsilon)\textbf{U}^\alpha +\textbf{U}_0 (\textbf{J}^\varepsilon_\alpha-\textbf{U}_\alpha\rho^\varepsilon)\textbf{U}^\alpha \nonumber\\
&+\textbf{U}_\alpha (\textbf{J}^\varepsilon_0-\textbf{U}_0\rho^\varepsilon)\textbf{U}^\alpha -\textbf{U}_0(\textbf{J}^\varepsilon_\mu-\textbf{U}_\mu \rho^\varepsilon)\textbf{U}^\mu\nonumber\\
&=(T_{0\alpha}^{KG}-T_{0\alpha}^{EP})\textbf{U}^\alpha-\textbf{U}_\alpha\textbf{U}^\alpha (\textbf{U}_0 \rho-\textbf{J}^\varepsilon_0) \nonumber\\
&=(T_{0\alpha}^{KG}-T_{0\alpha}^{EP})\textbf{U}^\alpha +(J^\varepsilon_0-\textbf{U}_0\rho)+2V'(\rho)(J^\varepsilon_0-\textbf{U}_0\rho).\nonumber
\end{align}
using the definition \ref{defi:etadefmod} and the Proposition \ref{propal:propagwellprepREP}. Then, we compute the derivative 
\begin{align*}
		&\frac{d}{dt}H^\varepsilon=\frac{d}{dt}\int_{\mathbb{R}^3}{\eta^\varepsilon(\textbf{U})dx}\\
  &=-\int_{\mathbb{R}^3}{(T_{\alpha\beta}^{KG}-T_{\alpha\beta}^{EP})\textbf{D}^\alpha \textbf{U}^\beta dx}+2\int_{\mathbb{R}^3}{\partial_t\rho V^{\prime\prime}(\rho)(\textbf{J}^\varepsilon_0-\textbf{U}_0\rho) dx}+2\int_{\mathbb{R}^3}{ V^{\prime}(\partial_t\textbf{J}^\varepsilon_0-\partial_t(\textbf{U}_0\rho)) dx}\\
  &=-\int_{\mathbb{R}^3}{(h^\varepsilon_{\alpha\beta}+I^\varepsilon_{\alpha\beta})\textbf{D}^\alpha \textbf{U}^\beta dx}+2\int_{\mathbb{R}^3}{\partial_tV^{\prime}(\rho)(\textbf{J}^\varepsilon_0-\textbf{U}_0\rho) dx}
+2\int_{\mathbb{R}^3}{ V^{\prime}(\partial_t\textbf{J}^\varepsilon_0-\partial_t(\textbf{U}_0\rho)) dx},
\end{align*}
by the fact that $T_{\alpha0}^{KG}$ and $T_{\alpha0}^{EP}$ have $0$ divergence and the fact that the charges are conserved, see \eqref{eq:divstressenergyKGP}, \eqref{eq:divstressenergyREP}, \eqref{eq:conservedchargeKGP} and \eqref{eq:conservchargeREP}.\\
Moreover, because $\textbf{U}$ and $\rho$ are solutions\footnote{In particular, we use the derivatives of the normalization \eqref{eq:propagwellprepREP}.} to \eqref{eq:EulerbaseREP} we have 
\begin{align*}
    &-\int_{\mathbb{R}^3}{I^\varepsilon_{\alpha\beta}\textbf{D}^\alpha \textbf{U}^\beta dx}\\
    &=-\int_{\mathbb{R}^3}{-\textbf{U}_\alpha \textbf{U}_\beta\textbf{D}^\alpha \textbf{U}^\beta(\rho-\rho^\varepsilon)+\textbf{U}_\alpha (\textbf{J}^\varepsilon_\beta-\textbf{U}_\beta\rho^\varepsilon)(\textbf{D}^\alpha \textbf{U}^\beta+\textbf{D}^\beta \textbf{U}^\alpha)-\textbf{D}_\alpha \textbf{U}^\alpha(\textbf{J}^\varepsilon_\mu-\textbf{U}_\mu \rho^\varepsilon)\textbf{U}^\mu dx}\\
    &=-\int_{\mathbb{R}^3}{\textbf{U}^\beta \textbf{D}_\beta V'(\rho)(\rho-\rho^\varepsilon)-2(\textbf{J}^\varepsilon_\beta-\textbf{U}_\beta\rho^\varepsilon)\textbf{D}^\beta V'(\rho)-\textbf{D}_\alpha \textbf{U}^\alpha(\textbf{J}^\varepsilon_\mu-\textbf{U}_\mu \rho^\varepsilon)\textbf{U}^\mu dx}\\
    &=-\int_{\mathbb{R}^3}{-2\textbf{J}^\varepsilon_\beta \textbf{D}^\beta V'(\rho)+\textbf{U}^\beta \textbf{D}_\beta V'(\rho)(\rho+\rho^\varepsilon)-\textbf{D}_\alpha \textbf{U}^\alpha(\textbf{J}^\varepsilon_\mu-\textbf{U}_\mu \rho^\varepsilon)\textbf{U}^\mu dx},
\end{align*}
and because the charges \eqref{eq:conservedchargeKGP} and \eqref{eq:conservchargeREP} are conserved we have
\begin{align*}
    2\int_{\mathbb{R}^3}{\partial_tV^{\prime}(\rho)(\textbf{J}^\varepsilon_0-\textbf{U}_0\rho) dx}
+2\int_{\mathbb{R}^3}{ V^{\prime}(\partial_t\textbf{J}^\varepsilon_0-\partial_t(\textbf{U}_0\rho)) dx}&=2\int_{\mathbb{R}^3}{\partial_tV^{\prime}(\rho)(\textbf{J}^\varepsilon_0-\textbf{U}_0\rho) dx}\\
&+2\int_{\mathbb{R}^3}{ V^{\prime}(\textbf{D}^i \textbf{J}^\varepsilon_i-\textbf{D}^i(\textbf{U}_i\rho)) dx}\\
&=-2\int_{\mathbb{R}^3}{\textbf{D}^\alpha V^{\prime}(\rho)(\textbf{J}^\varepsilon_\alpha-\textbf{U}_\alpha\rho) dx}.
\end{align*}
Finally, we have
\begin{align*}
		&\frac{d}{dt}H^\varepsilon=-\int_{\mathbb{R}^3}{h^\varepsilon_{\alpha\beta}\textbf{D}^\alpha \textbf{U}^\beta dx}-\int_{\mathbb{R}^3}{\textbf{U}^\beta \textbf{D}_\beta V'(\rho)(\rho^\varepsilon-\rho)dx}+\int_{\mathbb{R}^3}{\textbf{D}_\alpha \textbf{U}^\alpha(\textbf{J}^\varepsilon_\mu-\textbf{U}_\mu \rho^\varepsilon)\textbf{U}^\mu dx}\\
  &=-\underbrace{\int_{\mathbb{R}^3}{h^\varepsilon_{\alpha\beta}\textbf{D}^\alpha \textbf{U}^\beta dx}}_\text{$\mathscr{H}_1$}+\underbrace{\int_{\mathbb{R}^3}{\rho \textbf{D}_\beta \textbf{U}^\beta V''(\rho)(\rho^\varepsilon-\rho)dx}}_\text{$\mathscr{H}_2$}+\underbrace{\int_{\mathbb{R}^3}{\textbf{D}_\alpha \textbf{U}^\alpha(\textbf{J}^\varepsilon_\mu-\textbf{U}_\mu \rho^\varepsilon)\textbf{U}^\mu dx}}_\text{$\mathscr{H}_3$}.\\
\end{align*}
We  know that we have
\begin{equation}
\label{eq:control1parHPgproof}
    \mathscr{H}_1\leq C_0 H^\varepsilon,
\end{equation}
using the previous calculations and the Propositions \ref{propal:c2c3Cyproof} and \ref{propal:HgoodfinalCyproof}.\\
At first sight, it looks like the two last terms, $\mathscr{H}_2$ and $\mathscr{H}_3$, are not controllable by $H^\varepsilon$ but only by $(H^\varepsilon)^{1/\gamma}$ and $(H^\varepsilon)^{1/2}$, they are only linear in the quantities that we are looking at.
In fact, because of the specific structure of these terms we can improve the apparent lack of smallness.\\
Firstly,
\begin{align*}
\mathscr{H}_3=\underbrace{\int_{\mathbb{R}^3}{\textbf{D}_\alpha \textbf{U}^\alpha \frac{(\textbf{J}^\varepsilon_\mu-\textbf{U}_\mu \rho^\varepsilon)(\textbf{U}^\mu \rho^\varepsilon-(\textbf{J}^\varepsilon)^\mu)}{2\rho^\varepsilon}dx}}_\text{$\mathscr{H}_{3.1}$}+\underbrace{\int_{\mathbb{R}^3}{\textbf{D}_\alpha \textbf{U}^\alpha (\frac{\textbf{J}^\varepsilon_\mu(\textbf{J}^\varepsilon)^\mu}{2\rho^\varepsilon}-\frac{\textbf{U}_\mu \textbf{U}^\mu \rho^\varepsilon}{2})dx}}_\text{$\mathscr{H}_{3.2}$},
\end{align*}
and with Proposition \ref{propal:HgoodfinalCyproof} we directly get 
\begin{align}
\label{eq:control2parHPgproof}
\mathscr{H}_{3.1}\leq C_0H^\varepsilon.
\end{align}
For $\mathscr{H}_{3.2}$, we use the normalization \eqref{eq:propagwellprepREP} and the equation \eqref{eq:splitJJ3KGP} to get 
\begin{align*}
    &\mathscr{H}_{3.2}=\int_{\mathbb{R}^3}{\textbf{D}_\alpha \textbf{U}^\alpha(\frac{\varepsilon^2\sqrt{\rho^\varepsilon}\Box\sqrt{\rho^\varepsilon}-\rho^\varepsilon-2V'(\rho^\varepsilon)\rho^\varepsilon}{2}+\frac{\rho^\varepsilon+2V'(\rho)\rho^\varepsilon}{2})dx}\\
    &=\underbrace{\int_{\mathbb{R}^3}{\textbf{D}_\alpha \textbf{U}^\alpha\frac{\varepsilon^2\sqrt{\rho^\varepsilon}\Box\sqrt{\rho^\varepsilon}}{2}dx}}_\text{$\mathscr{H}_{3.2.1}$}\underbrace{-\int_{\mathbb{R}^3}{\textbf{D}_\alpha \textbf{U}^\alpha (V'(\rho^\varepsilon)\rho^\varepsilon-V'(\rho)\rho^\varepsilon)dx}}_\text{$\mathscr{H}_{3.2.2}$}.
\end{align*}
Then, we get 
\begin{align*}
    &\mathscr{H}_{3.2.1}=-\frac{d}{dt}\int_{\mathbb{R}^3}{\textbf{D}_\alpha \textbf{U}^\alpha\frac{\varepsilon^2\sqrt{\rho^\varepsilon}\partial_t \sqrt{\rho^\varepsilon}}{2}dx}+\int_{\mathbb{R}^3}{\textbf{D}_\alpha \textbf{U}^\alpha\frac{\varepsilon^2\partial_t \sqrt{\rho^\varepsilon}\partial_t \sqrt{\rho^\varepsilon}}{2}dx}+\int_{\mathbb{R}^3}{\partial_t\textbf{D}_\alpha \textbf{U}^\alpha\frac{\varepsilon^2\sqrt{\rho^\varepsilon}\partial_t \sqrt{\rho^\varepsilon}}{2}dx}\\
    &-\int_{\mathbb{R}^3}{\textbf{D}_\alpha \textbf{U}^\alpha\frac{\varepsilon^2\nabla \sqrt{\rho^\varepsilon}\nabla \sqrt{\rho^\varepsilon}}{2}dx}-\int_{\mathbb{R}^3}{\nabla\textbf{D}_\alpha \textbf{U}^\alpha\nabla \sqrt{\rho^\varepsilon}\frac{\varepsilon^2 \sqrt{\rho^\varepsilon}}{2}dx}\\
    &\leq -\frac{d}{dt}\int_{\mathbb{R}^3}{\textbf{D}_\alpha \textbf{U}^\alpha\frac{\varepsilon^2\sqrt{\rho^\varepsilon}\partial_t \sqrt{\rho^\varepsilon}}{2}dx}+C_0||\varepsilon \textbf{D} \sqrt{\rho^\varepsilon}||^2_{L^2}+\varepsilon C_0||\varepsilon \textbf{D} \sqrt{\rho^\varepsilon}||_{L^2}||\sqrt{\rho^\varepsilon}||_{L^2}\\
    &\leq \frac{d}{dt}\underbrace{\int_{\mathbb{R}^3}{-\textbf{D}_\alpha \textbf{U}^\alpha\frac{\varepsilon^2\sqrt{\rho^\varepsilon}\partial_t \sqrt{\rho^\varepsilon}}{2}dx}}_\text{$G^\varepsilon$}+C_0H^\varepsilon+\varepsilon C_0(H^\varepsilon)^{1/2},
\end{align*}
where the mass term in the energy \eqref{eq:EenergyKGP} controls $||\sqrt{\rho^\varepsilon}||_{L^2}$ by $C_0$ with \eqref{eq:Coenergmainth}, and by Proposition \ref{propal:HgoodfinalCyproof}
\begin{align*}
    &\mathscr{H}_{2}+\mathscr{H}_{3.2.2}=\int_{\mathbb{R}^3}{\textbf{D}_\beta \textbf{U}^\beta(\rho V''(\rho)(\rho^\varepsilon-\rho)-V'(\rho^\varepsilon)\rho^\varepsilon+V'(\rho)\rho^\varepsilon)dx}\\
    &=\int_{\mathbb{R}^3}{\textbf{D}_\beta \textbf{U}^\beta((\gamma-1) V'(\rho)(\rho^\varepsilon-\rho)-\gamma V(\rho^\varepsilon)+V'(\rho)\rho^\varepsilon)dx}=\int_{\mathbb{R}^3}{\gamma\textbf{D}_\beta \textbf{U}^\beta(V'(\rho)(\rho^\varepsilon-\rho)-V(\rho^\varepsilon)+ V(\rho))dx}\\
    &=-\int_{\mathbb{R}^3}{\gamma\textbf{D}_\beta \textbf{U}^\beta\Theta^\varepsilon(\rho^\varepsilon,\rho)dx}\leq C_0H^\varepsilon,
\end{align*}
where, in the last calculation, we use the fact that $V$ is a power law\footnote{In \cite{zbMATH05243173}, this part requires more arguments because the author deal with an approximate solution, i.e., a solution to the equation with a modified nonlinearity that is not a power law, and then pass to the limit. This is done to recover the global solutions, here this result is direct.}. 
Gathering everything, we have 
\begin{equation}
\label{eq:ineqHGgoodPgproof}
		\frac{d}{dt}H^\varepsilon\leq C_0\left(H^\varepsilon+(H^\varepsilon)^{1/2}\varepsilon\right)+\frac{d}{dt}G^\varepsilon.
\end{equation}
We integrate between $0$ and $t\in[0,\min(t_\varepsilon,T)]$ and use the Young inequality to get 
\begin{align*}
		H^\varepsilon(t)\leq H^\varepsilon(0)+C_0\int^{t}_0{\left(H^\varepsilon(s)+\varepsilon^2 \right)ds}+G^\varepsilon(t)-G^\varepsilon(0).
\end{align*}
Now, using again the Young inequality properly and calculations that are similar to the previous ones, we find that for all $t\in[0,\min(t_\varepsilon,T)]$
\begin{equation*}
		G^\varepsilon(t)\leq C_0(\delta^{-1}\varepsilon^2+\delta H^\varepsilon(t)),
\end{equation*}
for $\delta$ as small as we want. It gives us
\begin{equation}
\label{eq:almostHgoodPgproof}
				H^\varepsilon(t)\leq H^\varepsilon(0)+C_0(\int^{t}_0{\left(H^\varepsilon(s)+\varepsilon^2 \right) ds}+\delta H^\varepsilon(t)+\varepsilon^2+\delta H^\varepsilon(0)),
\end{equation}
and so for $\delta$ small in comparison to $C_0$ we have
\begin{equation}
\label{eq:finalHPgproof}
		H^\varepsilon(t)\leq C_0( H^\varepsilon(0)+\int^{t}_0{\left(H^\varepsilon(s)+\varepsilon^2 \right)ds}+\varepsilon^2).
\end{equation}
By Gronwall lemma and the assumptions of Theorem \ref{unTheorem:TH1mainth} we have
\begin{equation}
\label{eq:finalHsmallPgproof}
		H^\varepsilon(t)\leq C_0\varepsilon^2,
\end{equation}
for all $t\in[0,\min(t_\varepsilon,T)]$ and thus the \textbf{propagation property}, point \ref{item:modpoint2Idea}.
    
\end{proof}
\subsection{Putting everything together}
From section \ref{subsection:reguexist} we have point \ref{item:point1mainth} of the main Theorem. Then, with the results of section \ref{subsection:Cyproof} and \ref{subsection:Pgproof} we get point \ref{item:point2mainth}. The smallness of the modulated energy propagates and implies the semiclassical (monokinetic) limit of KGP to the REP system. Moreover, in the case $2\leq\gamma\leq3$ we have the global existence of the solutions to \eqref{eq:KGPKGP} from Proposition \ref{propal:solutglobalKGP} and thus we get \ref{item:point3mainth}.

\section{Proof of Theorem \ref{unTheorem:TH2mainth}}
\label{section:relatEul}
We give here the proof of the second Theorem, \ref{unTheorem:TH2mainth}, that states that the limit fluid system is in fact the true relativistic barotropic Euler system, given in \eqref{eq:relatEulintro} and the next section, up to a change of unknowns. We also make a comment on the non-relativistic limit of both the relativistic barotropic Euler system and the REP system \eqref{eq:EulerbaseREP}.  
\subsection{Equivalence of the REP system and the relativistic barotropic Euler}
\label{subsection:relatEullink}
We consider the usual relativistic Euler equation 
\begin{equation}
    \begin{cases}
    \label{eq:relatEul}
        \textbf{u}^\alpha\textbf{D}_\alpha \textbf{u}^\beta+(g^{\alpha\beta}+\frac{\textbf{u}^\alpha \textbf{u}^\beta}{c^2})\frac{\textbf{D}_\alpha p}{\mu+\frac{p}{c^2}}=0,\\
        \textbf{u}^\alpha\textbf{D}_\alpha\mu+\textbf{D}_\alpha \textbf{u}^\alpha(\mu+\frac{p}{c^2})=0,\\
        p=p(\mu),\\
        \textbf{u}^\alpha \textbf{u}_\alpha=-c^2,
    \end{cases}
\end{equation}
for $g$ the metric (here the Minkowski metric $g=Diag(-c^2,1,1,1)$), $\mu$ the energy density, $p$ the pressure, $\textbf{u}$ the four-velocity and $c$ the speed of light. The constant $c$ is not set up to $1$ because we are interested in the behaviour of this system for $c$ near $+\infty$. We refer to \cite{Gourgoulhon_2006}, \cite{Andersson_2021} and \cite{zbMATH05382453} for a general description of the relativistic fluid and to \cite{zbMATH07783487} and \cite{disconzi2023recent} for recent developments in the mathematical analysis.\\
The main difference between the usual relativistic Euler \eqref{eq:relatEul} and REP \eqref{eq:EulerbaseREP} is the role of the pressure and the potential. In \eqref{eq:relatEul}, we see that the gradient of the pressure (the density of force) acts only perpendicularly to the flow. Indeed the tensor 
\begin{equation}
 \label{eq:projector}
    \Pi_{\alpha\beta}=g_{\alpha\beta}+\frac{\textbf{u}_\alpha \textbf{u}_\beta}{c^2}
\end{equation}
is the projection on the orthogonal to $\textbf{u}$. The effect of the spacetime gradient of the potential $V'$ in \eqref{eq:EulerbaseREP} cannot be identified with the effect of a standard relativistic pressure. In comparison, in the non-relativistic case the pressure can clearly be identified with a potential. Moreover, the total energy density $\textbf{U}^0\rho$ is conserved in \eqref{eq:EulerbaseREP} whereas the energy $\textbf{u}^0\mu$ of \eqref{eq:relatEul} is not, and the normalization of $\textbf{U}$ and $\textbf{u}$ are different. Nonetheless, the REP system \eqref{eq:EulerbaseREP} and the usual relativistic Euler \eqref{eq:relatEul} are equivalent if we look at the right quantities.\\
\begin{propal}
 \label{propal:equivrelatEul}
    Let $(\textbf{U},\rho)$ be a solution to \eqref{eq:EulerbaseREP}, then the quantities 
    \begin{align}
    \label{eq:scaling1relatEuler}
        &\mu=\rho+\frac{(\gamma+1)V(\rho)}{c^2}, &\textbf{u}=\textbf{U}\Gamma,
    \end{align}
    with the scaling factor $\Gamma=\frac{1}{\sqrt{1+\frac{2V'}{c^2}}}$ and the pressure $p=(\gamma-1)V(\rho)$,  
    are solutions to \eqref{eq:relatEul}. We also have 
      \begin{align}
      \label{eq:scaling2relatEuler}
        &\textbf{J}=\textbf{U}\rho=\textbf{u}(\mu+\frac{p}{c^2})\Gamma, &\rho=(\mu+\frac{p}{c^2})\Gamma^2.
    \end{align}
\end{propal}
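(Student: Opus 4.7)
The proof is a direct algebraic verification, not a well-posedness result: given $(\textbf{U},\rho)$ solving the REP system \eqref{eq:EulerbaseREP} with the normalization $\textbf{U}^\alpha \textbf{U}_\alpha = -c^2 - 2V'(\rho)$ (the $c$-dependent version of proposition \ref{propal:propagwellprepREP}), I would substitute the definitions of $\textbf{u}$, $\mu$, $p$ into each equation of \eqref{eq:relatEul} and verify it. Two algebraic identities drive the whole calculation. The first is the homogeneity relation $\rho V'(\rho) = \gamma V(\rho)$, valid because $V(x) = x^\gamma/\gamma$; it implies $\boldsymbol{\nabla}_\alpha p = (\gamma-1)V'(\rho)\boldsymbol{\nabla}_\alpha\rho = \rho \boldsymbol{\nabla}_\alpha V'(\rho)$. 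The second, a direct consequence, is
\[
\mu + p/c^2 = \rho + 2\gamma V(\rho)/c^2 = \rho\bigl(1 + 2V'(\rho)/c^2\bigr) = \rho/\Gamma^2,
\]
which is already the second half of \eqref{eq:scaling2relatEuler}.

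With these in hand, the normalization $\textbf{u}^\alpha \textbf{u}_\alpha = -c^2$ is immediate from $\textbf{u} = \Gamma \textbf{U}$ and the definition of $\Gamma$. For the continuity equation, I would expand $\textbf{u}^\alpha\boldsymbol{\nabla}_\alpha \mu + \boldsymbol{\nabla}_\alpha \textbf{u}^\alpha(\mu + p/c^2)$ using the chain rule $\boldsymbol{\nabla}_\alpha \Gamma = -(\Gamma^3/c^2)\boldsymbol{\nabla}_\alpha V'(\rho)$ and the REP continuity equation to eliminate $\boldsymbol{\nabla}_\alpha \textbf{U}^\alpha$. After regrouping, the coefficient of $\textbf{U}^\alpha \boldsymbol{\nabla}_\alpha \rho$ collapses to $\Gamma(\gamma-1)V'/c^2$, and this is cancelled by the remaining term $-\rho\Gamma/c^2 \cdot \textbf{U}^\alpha \boldsymbol{\nabla}_\alpha V'(\rho)$ thanks to $\rho\boldsymbol{\nabla} V' = (\gamma-1)V'\boldsymbol{\nabla}\rho$.

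The momentum equation is the most intricate step. Multiplying the REP Euler equation $\textbf{U}^\alpha\boldsymbol{\nabla}_\alpha \textbf{U}^\beta + \boldsymbol{\nabla}^\beta V'(\rho) = 0$ by $\Gamma^2$ and expanding $\Gamma^2 \textbf{U}^\alpha\boldsymbol{\nabla}_\alpha \textbf{U}^\beta = \textbf{u}^\alpha\boldsymbol{\nabla}_\alpha \textbf{u}^\beta + (\Gamma^2/c^2)\textbf{u}^\alpha \textbf{u}^\beta \boldsymbol{\nabla}_\alpha V'(\rho)$ (via the same chain rule on $\Gamma$), I obtain
\[
\textbf{u}^\alpha\boldsymbol{\nabla}_\alpha \textbf{u}^\beta + \Gamma^2\bigl(g^{\alpha\beta} + \textbf{u}^\alpha \textbf{u}^\beta/c^2\bigr)\boldsymbol{\nabla}_\alpha V'(\rho) = 0,
\]
and the identities $\boldsymbol{\nabla}_\alpha p = \rho \boldsymbol{\nabla}_\alpha V'$ and $\Gamma^2 = \rho/(\mu + p/c^2)$ turn the second term into exactly the projected pressure-gradient term of \eqref{eq:relatEul}. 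The first relation in \eqref{eq:scaling2relatEuler} then follows by combining $\textbf{J} = \textbf{U}\rho$, $\textbf{u} = \Gamma\textbf{U}$ and $\rho = \Gamma^2(\mu + p/c^2)$. There is no deep obstacle here; the entire content of the proposition is algebraic, and the main point is to spot the two identities above, after which every term falls into place by careful bookkeeping.
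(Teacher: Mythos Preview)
Your proposal is correct and follows essentially the same direct-verification strategy as the paper: both rely on the polynomial identities $\rho V'(\rho)=\gamma V(\rho)$ and $\rho V''(\rho)=(\gamma-1)V'(\rho)$, the chain rule $\boldsymbol{\nabla}_\alpha\Gamma=-(\Gamma^3/c^2)\boldsymbol{\nabla}_\alpha V'$, and the relation $\mu+p/c^2=\rho/\Gamma^2$ to reduce each equation of \eqref{eq:relatEul} to the corresponding REP equation. The only cosmetic difference is that you isolate the two key identities at the outset, whereas the paper weaves them into the running computation.
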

\begin{proof}
    First, we have 
    \begin{align*}
         &\textbf{u}^\alpha\textbf{D}_\alpha \textbf{u}^\beta=\Gamma^2 \textbf{U}^\alpha\textbf{D}_\alpha \textbf{U}^\beta+\Gamma \textbf{U}^\alpha \textbf{U}^\beta \textbf{D}_\alpha\Gamma=\Gamma^2(-\textbf{D}^\beta V'-\Gamma^2 \textbf{U}^\alpha \textbf{U}^\beta \frac{\textbf{D}_\alpha V'}{c^2})\\
         &=-(g^{\alpha\beta}+\frac{\textbf{u}^\alpha \textbf{u}^\beta}{c^2})\frac{\textbf{D}_\alpha V'}{1+\frac{2V'}{c^2}}=-(g^{\alpha\beta}+\frac{\textbf{u}^\alpha \textbf{u}^\beta}{c^2})\frac{(\gamma-1)V'\textbf{D}_\alpha\rho}{\rho+\frac{2\gamma V}{c^2}}\\
         &=-(g^{\alpha\beta}+\frac{\textbf{u}^\alpha \textbf{u}^\beta}{c^2})\frac{\textbf{D}_\alpha p}{\mu+\frac{p}{c^2}},\\
    \end{align*}
    and then 
        \begin{align*}
         &\textbf{u}^\alpha\textbf{D}_\alpha \mu=\textbf{u}^\alpha\textbf{D}_\alpha \rho +\textbf{u}^\alpha\textbf{D}_\alpha \rho (\gamma+1)\frac{V'}{c^2}=\frac{1}{\Gamma^2}\textbf{u}^\alpha\textbf{D}_\alpha \rho +\textbf{u}^\alpha\textbf{D}_\alpha \rho (\gamma-1)\frac{V'}{c^2}\\
         &=\frac{1}{\Gamma^2}(\textbf{u}^\alpha\textbf{D}_\alpha \rho +\textbf{u}^\alpha\textbf{D}_\alpha \rho\frac{V''}{c^2}\rho\Gamma^2)=\frac{1}{\Gamma^2}(-\textbf{D}_\alpha \textbf{U}^\alpha\Gamma\rho -\textbf{U}^\alpha\textbf{D}_\alpha(\Gamma)\rho)\\
         &=(1+\frac{2V'}{c^2})(-\textbf{D}_\alpha \textbf{u}^\alpha\rho)=-\textbf{D}_\alpha \textbf{u}^\alpha(\mu+\frac{p}{c^2}).\\
    \end{align*}
\end{proof}
\begin{cor}
\label{cor:theoremrelatiEul}
    The Theorem \ref{unTheorem:TH2mainth} is a direct corollary of Theorem \ref{unTheorem:TH1mainth} and Proposition \ref{propal:equivrelatEul}.
\end{cor}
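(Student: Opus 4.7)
The plan is to obtain Theorem \ref{unTheorem:TH2mainth} as a direct translation of Theorem \ref{unTheorem:TH1mainth} through the change of variables provided by Proposition \ref{propal:equivrelatEul}, with $c=1$. No new analysis is needed; the content is entirely an algebraic substitution at fixed time.

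First I would note that the hypotheses of Theorem \ref{unTheorem:TH1mainth} are exactly those assumed in Theorem \ref{unTheorem:TH2mainth}, so Theorem \ref{unTheorem:TH1mainth} applies and furnishes a solution $(\textbf{U},\rho)$ to the REP system \eqref{eq:EulerbaseREP} with the stated regularity on $[0,T]$. Then I would apply Proposition \ref{propal:equivrelatEul} to this $(\textbf{U},\rho)$ in order to define
\begin{equation*}
\Gamma=\frac{1}{\sqrt{1+2V'(\rho)}},\qquad \textbf{u}=\Gamma\,\textbf{U},\qquad \mu=\rho+(\gamma+1)V(\rho),\qquad p=(\gamma-1)V(\rho),
\end{equation*}
which is a solution of the relativistic Euler equations \eqref{eq:relatEul} with $c=1$. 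Proposition \ref{propal:equivrelatEul} gives the pointwise identities
\begin{equation*}
\textbf{U}\rho=\Gamma\,\textbf{u}\,(\mu+p),\qquad \rho=\Gamma^2(\mu+p),\qquad (\gamma-1)V(\rho)=p,
\end{equation*}
valid on $[0,T]\times\mathbb{R}^3$.

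Substituting these identities into the four convergence statements of point \ref{item:point2mainth} of Theorem \ref{unTheorem:TH1mainth} is immediate: the term $\textbf{U}\rho$ becomes $\Gamma\textbf{u}(\mu+p)$, the term $\rho$ becomes $\Gamma^2(\mu+p)$, and $V(\rho)$ becomes $p/(\gamma-1)$. For the last estimate I would multiply the bound $\|V(\rho^\varepsilon)-V(\rho)\|_{L^1}=O(\varepsilon^{2/\gamma})$ by the constant $(\gamma-1)$ to produce $\|(\gamma-1)V(\rho^\varepsilon)-p\|_{L^1}=O(\varepsilon^{2/\gamma})$. This yields \eqref{eq:conv3mainth}, which is exactly point \ref{item:point4mainth}. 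The same substitution applied to the strong $L^\infty_t$ statements of point \ref{item:point3mainth} in the case $\gamma=2$ yields \eqref{eq:conv4mainth}, i.e.\ point \ref{item:point5mainth}.

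There is no genuine obstacle: the only thing to verify is that the identities of Proposition \ref{propal:equivrelatEul} hold pointwise in $(t,x)$ (and with $c=1$, as used throughout the paper), so that substituting them inside the Lebesgue norms in space (and the $\sup$ in time) is lawful. This is guaranteed by the regularity of $(\textbf{U},\rho)$ given in point \ref{item:point1mainth} of Theorem \ref{unTheorem:TH1mainth}, which ensures that $\Gamma$, $\textbf{u}$, $\mu$ and $p$ are well defined classical functions (in particular $1+2V'(\rho)\geq 1>0$ by the positivity of $\rho$ from Proposition \ref{propal:propagwellprepREP} and $V'(x)=x^{\gamma-1}\geq 0$). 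Hence Theorem \ref{unTheorem:TH2mainth} follows at once.
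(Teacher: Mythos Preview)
Your proposal is correct and matches the paper's approach: the paper gives no explicit proof for this corollary, treating it as an immediate consequence of substituting the identities of Proposition~\ref{propal:equivrelatEul} (with $c=1$) into the conclusions of Theorem~\ref{unTheorem:TH1mainth}. Your write-up simply spells out this substitution in detail, which is entirely in line with the intended argument.
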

\subsection{Non-relativistic limit}
\label{subsection:NRlimitEul}
We want to look at the non-relativistic limit at least formally. Let $(\textbf{u}_c,\mu_c)$ be the solution to \eqref{eq:relatEul}, traditionally, we write\footnote{We note $u^0_c$ the time component of $\textbf{u}_c$ and $u_c$ its space components.} $(u^0_c,u_c)$ as $(\theta_c,\theta_c v_c)$ where $\theta_c=\frac{1}{\sqrt{1-\frac{v_c^2}{c^2 }}}$ is the Lorentz factor and $v_c$ the non-relativistic velocity. Formally, as $c\to \infty$, we are expecting 
\begin{align}
\label{eq:NRlimitEul}
    &\lim_{c\to \infty} (u_c^0,u_c,\mu_c,p_c)=(u_\infty^0,u_\infty,\mu_\infty,p_\infty)=(1,v_\infty,\mu_\infty,p_\infty),
\end{align}
where $(v_\infty,\mu_\infty)$ is solution to the non-relativistic Euler equations and where $p_\infty$ is the corresponding pressure (see for example \cite{zbMATH07119817,zbMATH07531778,MAI2022336,zbMATH05077591,zbMATH00840326,zbMATH01309659,zbMATH07114642} for a rigorous proof of this convergence in dimension 1).\\ Moreover, with $(\textbf{u}_c,\mu_c)$ given by \ref{propal:equivrelatEul}, we clearly see that 
\begin{align*}
    &p_c=(\gamma-1)V(\rho_c), &\Gamma_c=1+O(c^{-2}),\\
    &\textbf{u}_c=\textbf{U}_c+O(c^{-2}), &\mu_c=\rho_c+O(c^{-2}),
\end{align*}
where we also index the solution to \eqref{eq:EulerbaseREP} with a $c$ to show the dependence. Thus, we have heuristically that 
\begin{align*}
    &\lim_{c\to \infty} (U_c^0,U_c,\rho_c,(\gamma-1)V(\rho_c))=(1,v_\infty,\mu_\infty,p_\infty),
\end{align*}
which is consistent with the non-relativistic semi-classical limit of the massive nonlinear Klein-Gordon toward the classical compressible Euler given in \cite{zbMATH06101438} and presented in the diagram of section \ref{subsection:comments}. We have 
\begin{align*}
    &(J^\varepsilon_c)^0\xrightarrow{s.c.}U^0_c\rho_c=u^0_c(\mu_c+\frac{p_c}{c^2})\Gamma_c\xrightarrow{n.r.}\mu_\infty,\\
      &J^\varepsilon_c\xrightarrow{s.c.}U_c\rho_c=u_c(\mu_c+\frac{p_c}{c^2})\Gamma_c\xrightarrow{n.r.}v_\infty\mu_\infty,\\
        &|\Phi_c^\varepsilon|^2\xrightarrow{s.c.}\rho_c=(\mu_c+\frac{p_c}{c^2})\Gamma_c^2\xrightarrow{n.r.}\mu_\infty,
\end{align*}
where the semi-classical limit is rigorous and the non-relativistic limit is heuristic. 
\appendix
\section{Local well-posed for the REP system}
\label{section:appendix2}
We give the proof of Proposition \ref{propal:wellposedEulerEP} which states the local well-posedness of the REP system \eqref{eq:EulerbaseREP}, that we reintroduce here 
\begin{equation}
\label{eq:EulerbaseREPap}
\begin{cases}
\textbf{U}^\alpha\textbf{D}_\alpha \textbf{U}_\beta+\textbf{D}_\beta V'(\rho)=0,\\
\textbf{U}^\alpha\textbf{D}_\alpha \rho+\textbf{D}_\alpha \textbf{U}^\alpha\rho=0,\\
\textbf{U}^\alpha \textbf{U}_\alpha+2V'(\rho)=-1.
\end{cases}
\end{equation}
\begin{proof}[Proof of Proposition \ref{propal:wellposedEulerEP}]
We observe that the system \eqref{eq:EulerbaseREPap} is overdetermined in this form because of the normalization condition. We forget for now about this normalization, we use it in a later argument. 
  The goal is to put the system under a quasilinear symmetrized hyperbolic form and apply the classical well posedness result using energy methods. We can observe that the system is almost symmetric, but the natural symmetrizer $Diag(\rho,\rho,\rho,\rho,V'')$ is not uniformly coercive. It is a classical problem in fluid dynamics, it corresponds to the presence of vacuum. As done in \cite{zbMATH05243173} and originally in \cite{zbMATH04039498}, we chose to write the system \eqref{eq:EulerbaseREPap} in terms of $\textbf{U}$ and $f=\sqrt{\rho}^{\gamma-1}$, thus, we need to assume the regularity of $\sqrt{\rho}$. The well-prepared initial data have this feature, see Definition \ref{defi:wellprepREP}. Written with respect to $(\textbf{U},f=\sqrt{\rho}^{\gamma-1})$, the system \eqref{eq:EulerbaseREPap} (without the normalization) is equivalent to 
  \begin{equation}
  \begin{cases}
  \label{eq:goodEuler2EP}
       \textbf{U}^\alpha\textbf{D}_\alpha \textbf{U}_\beta+2f\textbf{D}_\beta f=0,\\
        \frac{4}{\gamma-1}\textbf{U}^\alpha\textbf{D}_\alpha f+2\textbf{D}_\alpha \textbf{U}^\alpha f=0,\\
  \end{cases}
  \end{equation}
  and with the argument of Remark \ref{rem:rhoremarkREP}, we see that $\rho$ and $f$ stay positive, so the relation between $f$ and $\rho$ is invertible. The system \eqref{eq:goodEuler2EP} is symmetric, we have 
    \begin{equation}
    \label{eq:goodEuler3EP}
       A^\alpha\textbf{D}_\alpha\begin{pmatrix}
    \textbf{U} \\
    f
\end{pmatrix}=0,
  \end{equation}
  where the $A^\alpha$ are symmetric matrices. Nonetheless, $A^0$ does not fit the coercivity criteria ($A^0(X,X)>c|X|^2$ for some $c>0$) to apply the energy method (in fact, $A^0$ is not positive definite). Instead of only looking at the transport equations \eqref{eq:goodEuler3EP} and then use the fact that the normalization is transported by them (from Proposition \ref{propal:propagwellprepREP}), we use actively the normalization 
  \begin{align}
  \label{eq:normalU0}
U^0=\textbf{U}^0=\sqrt{\textbf{U}^i\textbf{U}_i+1+2f^2}=\sqrt{U^iU_i+1+2f^2}.
  \end{align}
We search for solutions to the reduced system 
  \begin{equation}
  \label{eq:goodEuler4EP}
  \begin{cases}
       B^\alpha\textbf{D}_\alpha\begin{pmatrix}
    U \\
    f
\end{pmatrix}=0,\\
 U^0=\textbf{U}^0(U,f),
  \end{cases}
  \end{equation}
  with 
\begin{align*}
    &B^0=
\begin{pmatrix} 
	U^0 & 0 & 0 & 0\\
	0 & U^0  & 0 & 0 \\
	0 & 0 & U^0  & 0 \\
	0 & 0 & 0 & 4U^0(\frac{1}{\gamma-1}+\frac{f^2}{(U^0)^2})  \\
	\end{pmatrix}, &B^i=
\begin{pmatrix} 
	U^i & 0 & 0 & 2f\delta^i_1\\
	0 & U^i  & 0 & 2f\delta^i_2 \\
	0 & 0 & U^i  & 2f\delta^i_3 \\
	2fM^i_{~1} & 2fM^i_{~2} & 2fM^i_{~3} & 4U^i(\frac{1}{\gamma-1}-\frac{f^2}{(U^0)^2})  \\
	\end{pmatrix},
\end{align*}
where $M_{ij}=\delta_{ij}-\frac{U_iU_j}{(U^0)^2}$.
  One can check that the solutions to the previous system \eqref{eq:goodEuler4EP} are equivalent to the solutions to \eqref{eq:goodEuler3EP}. The matrix $M$ is positive definite. In particular, its eigenvalues are $1,1$ and $1-\frac{U_iU^i}{(U^0)^2}=\frac{1+2f^2}{1+U^iU_i+2f^2}>c$ for some $c>0$. Indeed, $U^i\leq ||U_i||_{H^2}\leq C_0$ and so $(U^iU_i)$ stay finite. We define the $4\times4$ symmetrizer 
  \[S=
\begin{pmatrix} 
	M & 0 \\
    0 & 1
	\end{pmatrix}.
	\quad
	\]
 We see that the $SB^\alpha$ are symmetric matrices with $SB^0$ uniformly coercive ($U^0\geq 1$ and $4(\frac{1}{\gamma-1}+\frac{f^2}{(U^0)^2})\geq \frac{4}{\gamma-1}$). We deduce that the system \eqref{eq:goodEuler4EP} is well posed for initial data $(U,f)$ in $H^n$ for $n\geq 4$, this is the case for well-prepared initial data. Then, we calculate $U^0$ with the equation \eqref{eq:normalU0} and we get all the components of $\textbf{U}$.
\end{proof}
\printbibliography
\end{document}